\declaretheoremstyle[bodyfont=\sl]{slanted}
\declaretheorem[name=Definition,style=definition,qed=$\dashv$,
numberwithin=section]{dfn}
\declaretheorem[name=Definition,style=definition,numbered=no,qed=$\dashv$]{dfn*}
\declaretheorem[name=Definition,style=definition,numbered=no]{dfnnoqed*}
\declaretheorem[name=Theorem,style=slanted,sibling=dfn]{tm}
\declaretheorem[name=Theorem,style=slanted,numbered=no]{tm*}
\declaretheorem[name=Lemma,style=slanted,sibling=dfn]{lem}
\declaretheorem[name=Corollary,style=slanted,sibling=dfn]{cor}
\declaretheorem[name=Corollary,style=slanted,numbered=no]{cor*}
\declaretheorem[name=Remark,style=definition,sibling=dfn]{rem}
\declaretheorem[name=Question,style=definition,sibling=dfn]{ques}
\declaretheorem[name=Fact,style=definition,sibling=dfn]{fact}
\declaretheorem[name=Fact,style=definition,numbered=no]{fact*}
\declaretheoremstyle[headfont=\scshape]{claimstyle}
\declaretheorem[name=Claim,style=claimstyle]{clm}
\declaretheorem[name=Claim,style=claimstyle]{clmtwo}
\declaretheorem[name=Claim,style=claimstyle]{clmthree}
\declaretheorem[name=Claim,style=claimstyle,numbered=no]{clm*}
\declaretheorem[name=Subclaim,style=claimstyle,numbered=no]{sclm*}
\declaretheorem[name=Subsubclaim,style=claimstyle,numbered=no]{ssclm*}
\declaretheoremstyle[headfont=\scshape]{casestyle}
\newcommand{\grd}{\mathrm{grd}}
\newcommand{\Sigmatwostrong}{Suppose $\kappa$ is 
$\Sigma_2$-strong. Then 
$V_{\kappa+1}^{\Mmm_\kappa}=V_{\kappa+1}^\Mmm$.}
\newcommand{\textthmsharp}{Let $A$ be a set such that $A^\#$ exists. Let 
$\kappa$ be an $A$-indiscernible.
 Then $V_{\kappa+1}^{\Mmm^{L(A)}_\kappa}=V_{\kappa+1}^{\Mmm^{L(A)}}$
 and this set is wellordered in $\Mmm^{L(A)}_\kappa$.}
 \newcommand{\textthmmeas}{Let $\kappa$ be measurable and $\mu$ be a 
normal measure on $\kappa$.
 Then for $\mu$-measure one many $\gamma<\kappa$,
$\Mmm_\gamma\sats$``$V_{\gamma+1}$ is wellorderable''.}
\newcommand{\weakcompacttmwithout}{Let $\kappa$ be weakly compact. Then:
 \begin{enumerate}
  \item $\Mmm_\kappa\sats  \kappa$-$\DC$ +
``$\kappa$ is weakly compact''.\footnote{So also $\Mmm_\kappa\sats$``$\kappa^+$ 
is regular and $\her_{\kappa^+}\sats\ZFC^-$''.}
 \item for each 
$A\in\Mmm_\kappa\inter\her_{\kappa^+}$, 
$\Mmm_\kappa\sats$``$A$ is wellordered''. 
\footnote{Note that
 the ``$\kappa^+$'' and ``$\her_{\kappa^+}$'' here are computed in $V$, not 
$\Mmm_\kappa$.}
  \item if 
$\pow(\kappa)^{\Mmm_\kappa}$ has cardinality
  $\kappa$ then \tu{(}i\tu{)}
 $\kappa$ is measurable in $\Mmm_\kappa$,
 and \tu{(}ii\tu{)} $x^\#$ exists for every $x\in\pow(\kappa)^{\Mmm_\kappa}$,
 and $x^\#\in\Mmm_\kappa$.
\item
If $\Mmm_\kappa\sats$``$\mu$ is a countably complete ultrafilter over 
$\gamma\leq\kappa$'',
then the ultrapower $\Ult(\Mmm_\kappa,\mu)$ is wellfounded and the ultrapower 
embedding
\[ i^{\Mmm_\kappa}_\mu:\Mmm_\kappa\to\Ult(\Mmm_\kappa,\mu) \]
is fully elementary.
\end{enumerate}}
\newcommand{\weakcompacttm}{Let $\kappa$ be weakly compact. Then:
 \begin{enumerate}
  \item\label{item:kappa-choice} $\Mmm_\kappa\sats  \kappa$-$\DC$ +
``$\kappa$ is weakly compact''.\footnote{So also $\Mmm_\kappa\sats$``$\kappa^+$ 
is regular and $\her_{\kappa^+}\sats\ZFC^-$''.}
 \item\label{item:<kappa+-DC} for each 
$A\in\Mmm_\kappa\inter\her_{\kappa^+}$, 
$\Mmm_\kappa\sats$``$A$ is wellordered''. 
\footnote{Note that
 the ``$\kappa^+$'' and ``$\her_{\kappa^+}$'' here are computed in $V$, not 
$\Mmm_\kappa$.}
  \item\label{item:small_power} if 
$\pow(\kappa)^{\Mmm_\kappa}$ has cardinality
  $\kappa$ then \tu{(}i\tu{)}
 $\kappa$ is measurable in $\Mmm_\kappa$,
 and \tu{(}ii\tu{)} $x^\#$ exists for every $x\in\pow(\kappa)^{\Mmm_\kappa}$.
\item\label{item:ult_emb_elem}
If $\Mmm_\kappa\sats$``$\mu$ is a countably complete ultrafilter over 
$\gamma\leq\kappa$'',
then the ultrapower $\Ult(\Mmm_\kappa,\mu)$ is wellfounded and the ultrapower 
embedding
\[ i^{\Mmm_\kappa}_\mu:\Mmm_\kappa\to\Ult(\Mmm_\kappa,\mu) \]
is fully elementary.
\end{enumerate}}
\newcommand{\inacctm}{
Let $\kappa$ be inaccessible (so $\Mmm_\kappa\sats$``$\kappa$ is 
inaccessible''). Then:
 \begin{enumerate}
  \item\label{item:Mmm_kappa_kappa-amenc} $\Mmm_\kappa$ is 
$\kappa$-amenably-closed.
  \item\label{item:V_kappa_wo} 
$\Mmm_\kappa\sats$``$(\kappa,\her_\kappa)$-Choice''
 iff $\Mmm_\kappa\sats$``$V_\kappa$ is 
wellordered''.
 \item \label{item:<kappa-Choice}
$\Mmm\sats$``$(<\kappa,\her_{\kappa^+}
)$-Choice holds,
 and hence, $(\her_{\kappa^+})^{<\kappa}\sub\her_{\kappa^+}$''.
 \end{enumerate}}
 \newcommand{\inacctmwithout}{
If $\kappa$ be inaccessible then:
 \begin{enumerate}
 \item $\Mmm_\kappa\sats$``$\kappa$ is 
inaccessible'' and $\her_\kappa^{\Mmm_\kappa}=V_\kappa^{\Mmm_\kappa}\sats\ZFC$.
  \item $\Mmm_\kappa$ is $\kappa$-amenably-closed.
  \item
$\Mmm_\kappa\sats$``$(\kappa,\her_\kappa)$-Choice''
$\iff \Mmm_\kappa\sats$``$\her_\kappa$ is 
wellordered''.
 \item\label{item:4}
$\Mmm\sats$``$(<\kappa,\her_{\kappa^+}
)$-Choice holds,
 so $(\her_{\kappa^+})^{<\kappa}\sub\her_{\kappa^+}$''.
 \end{enumerate}}
\newcommand{\GCH}{\mathrm{GCH}}
\newcommand{\swsw}{\mathrm{swsw}}
\newcommand{\Mmm}{\mathscr{M}}
\newcommand{\QQ}{\mathbb Q}
\newcommand{\PP}{\mathbb P}
\newcommand{\BB}{\mathbb B}
\newcommand{\sub}{\subseteq}
\newcommand{\all}{\forall}
\newcommand{\inter}{\cap}
\renewcommand{\int}{\inter}
\newcommand{\om}{\omega}
\newcommand{\pow}{\mathcal{P}}
\newcommand{\OR}{\mathrm{OR}}
\newcommand{\Tt}{\mathcal{T}}
\newcommand{\rg}{\mathrm{rg}}
\newcommand{\dom}{\mathrm{dom}}
\newcommand{\crit}{\mathrm{cr}}
\newcommand{\rest}{\!\upharpoonright\!}
\newcommand{\Ult}{\mathrm{Ult}}
\newcommand{\sats}{\models}
\newcommand{\elem}{\preccurlyeq}
\newcommand{\AC}{\mathrm{AC}}
\newcommand{\DC}{\mathrm{DC}}
\newcommand{\ZFC}{\mathrm{ZFC}}
\newcommand{\ZF}{\mathrm{ZF}}
\newcommand{\kappabar}{{\bar{\kappa}}}
\newcommand{\her}{\mathcal{H}}
\newcommand{\forces}{\dststile{}{}}
\DeclareMathOperator{\Th}{Th}
\DeclareMathOperator{\card}{card}
\DeclareMathOperator{\cof}{cof}
\newcommand{\psub}{\subsetneq}
\newcommand{\tu}{\textup}
\newcommand{\trcl}{trcl}
\title{Choice principles in local mantles}
\author{Farmer Schlutzenberg\footnote{Funded by the  Deutsche 
Forschungsgemeinschaft (DFG, German Research
Foundation) under Germany's Excellence Strategy EXC 2044-390685587,
Mathematics M\"unster: Dynamics-Geometry-Structure.}
\footnote{farmer.schlutzenberg@gmail.com, \url{\myurl}}}
\begin{document}

\maketitle

\begin{abstract}Assume $\ZFC$. Let $\kappa$ be a cardinal. A
\emph{${<\kappa}$-ground}
is  a transitive proper class $W$ modelling $\ZFC$ such that $V$ is a generic
extension of $W$ via a forcing $\PP\in W$ of cardinality
${<\kappa}$.
The \emph{$\kappa$-mantle} $\Mmm_\kappa$ is the intersection
of all ${<\kappa}$-grounds.

We prove that  certain partial choice principles in $\Mmm_\kappa$
are the consequence of $\kappa$ being inaccessible/weakly compact,
and some other related facts.
\end{abstract}

\section{Introduction}

Let us recall some standard notions from set-theoretic geology.
We generally assume $\ZFC$, though at times
(in particular in \S\ref{sec:grounds}) 
we will also consider a weaker theory $T_1$ (which includes $\AC$).

Given a transitive model $W$\footnote{Here we are not specific
about exactly what formalization of classes we use. We could
work in some  class set theory,
which allows quantification over such classes $W$, or we could
with more care restrict to classes definable from parameters;
for us a class must have the property
that the structure $(V,\in,W)$ satisfies $\ZFC$
in the language with symbols $\dot{\in},\dot{W}$ which interpret
$\in$ and $W$.} of $\ZFC$ and a 
forcing $\PP\in W$,
a \emph{$(W,\PP)$-generic} is a filter $G\sub\PP$
which is generic with respect to $W$.
For a cardinal $\kappa$, a \emph{${<\kappa}$-ground}
of $V$ is a transitive proper class $W\sats\ZFC$ such that
there is $\PP\in W$ with $\PP$ of cardinality ${<\kappa}$ (with cardinality 
as computed in $W$, or equivalently, in $V$) and a $(W,\PP)$-generic filter $G$
such that $V=W[G]$. A \emph{ground} is a $<\kappa$-ground
for some cardinal $\kappa$.\footnote{Throughout,
we consider only set-forcing, no class-forcing.} The \emph{mantle} $\Mmm$ is 
the intersection of all 
grounds.
The \emph{$\kappa$-mantle} $\Mmm_\kappa$ is the intersection of all 
${<\kappa}$-grounds.

By \cite{laver_vlc}, as refined in \cite{stgeol},
there is a formula $\varphi(x,y)$ in two free variables
such that (i) for all $r$, 
$W_r=\{x\bigm|\varphi(r,x)\}$
is a ground (possibly $W_r=V$), and (ii) for every ground
$W$ there is $r$ such that $W=W_r$.
Therefore we can discuss grounds uniformly,
and $\Mmm$ and $\Mmm_\kappa$ are transitive classes
which are definable ($\Mmm_\kappa$ from parameter $\kappa$).

In \S\ref{sec:grounds} we will give the proof of ground definability,
but from somewhat less than $\ZFC$: we show that it holds
under a certain theory $T_1$ (see \ref{dfn:T_1}),
which is true in $\her_\kappa$ whenever $\kappa$ is a strong
limit cardinal (assuming $\ZFC$). The proof is essentially
the usual $\ZFC$ proof, however.

From now on, we take $W_r$ to be defined as in 
\S\ref{sec:grounds}, by which $r=(\her_{\gamma^+})^W$
for some $\gamma\geq\om$ for which there is $\PP\in r$
and a $(W,\PP)$-generic $G$ with $W[G]=V$.

Let $\theta$ be a strong limit cardinal.
By Usuba \cite{usuba_ddg},
the grounds
are set-directed. By \cite{usuba_ddg} and \cite{schindler_fsttimtg},
this is moreover 
reasonably local, 
and in particular if $X\in\her_\theta$, then there is $s\in\her_\theta$
with
$W_s\sub\bigcap_{r\in X}W_r$. (For following Usuba's proof of \cite[Proposition 
5.1]{usuba_ddg},  note that we can take the regular
cardinal $\kappa$ of that proof with $\kappa<\theta$,
and then the model $W$ constructed there satisfies the $\kappa^{++}$-uniform 
covering property for $V$. Usuba then uses Bukovsky's theorem, \cite[Fact 
3.9]{usuba_ddg}, to deduce that $W$ is a ground of $V$. But
by \cite[Theorem 3.11]{schindler_fsttimtg}, the forcing for this can be taken 
of size
$2^{\kappa^{++}}$ in $W[g]=V$.)

Also by \cite{usuba_ddg}, $\Mmm\sats\ZFC$,
and by \cite[\S2]{usuba_extendible},
 $\Mmm_\theta\sats\ZF$ 
(so note $\Mmm_\theta\sats$``$\theta$ is a strong limit cardinal'',
in the $\ZF$ sense that $\Mmm_\theta$ has no surjection
$\pi:V_\alpha\to\theta$ with $\alpha<\theta$).
If
$V_\theta\elem_n V$ with $n$ large enough, then 
$V_\theta^{\Mmm_\theta}=V_\theta^{\Mmm}$, and hence
$V_\theta^{\Mmm_\theta}\sats\AC$.\footnote{\label{ftn:false_claim}An earlier
draft asserted here that 
(\cite{usuba_ddg}  shows) $\her_\theta^{\Mmm_\theta}=\her_\theta^{\Mmm}$
for every strong limit cardinal $\theta$. This
was however not used anywhere, and it is false, at least if $M_1$ 
exists; see \cite{local_mantles_in_Lx}.}
 Usuba  showed in \cite{usuba_extendible} that 
if $\kappa$ is an extendible 
cardinal then $\Mmm_\kappa=\Mmm$, so in this case, 
$\Mmm_\kappa\sats\ZFC$.
Hence Usuba asked in \cite{usuba_extendible} about whether
$\Mmm_\kappa\sats\ZFC$ in general. We consider
related questions in this paper. Let us first sketch some further history.

Suppose now $\kappa$ is inaccessible.
Then $V_\kappa^{\Mmm_\kappa}\sats\ZFC$.
For note that by inaccessibility and the remarks above,
for each $\alpha<\kappa$ there is some $r\in V_\kappa$
such that $V_\alpha^{W_r}=V_\alpha^{\Mmm_\kappa}$.
Since each $W_r\sats\ZFC$, it follows that 
$V_\kappa^{\Mmm_\kappa}\sats\ZFC$.
Clearly $\Mmm_\kappa\sats$``$\kappa$ is 
inaccessible'', and if $\kappa$ is Mahlo
then $\Mmm_\kappa\sats$``$\kappa$ is Mahlo''.

However, A. Lietz (\cite{lietz}) answered Usuba's question above negatively
(assuming large cardinals),
showing that in fact it is consistent
relative to a Mahlo cardinal that  $\kappa$ is Mahlo but 
$\Mmm_\kappa\sats$``$\kappa$-$\AC$ fails''. In fact, Lietz constructs
a forcing extension $L[G]$ of $L$ in which $\kappa$ is Mahlo and
$\Mmm_\kappa^{L[G]}$ satisfies ``there is a function 
$f:\kappa\to\her_{\kappa^+}$ for which there is no choice function''.
He also proved other related things.

In the last few years, the theory of Varsovian models
has also been developed by Fuchs, Schindler, Sargsyan and more recently the 
author.
Here, among other things, full mantles $\Mmm$ of certain fully
iterable mice  have 
been analyzed,
and shown to be strategy mice, hence satisfying $\ZFC$.
Analysis of natural $\kappa$-mantles of those mice was, however,  missing.
But using Varsovian model techniques,
the author then analyzed the $\kappa_0$-mantle of the mouse
$M_{\swsw}$ (Definition \ref{dfn:Mswsw}),
showing that it is a strategy mouse, modelling $\ZFC+\GCH$.
A very brief outline is given in \S\ref{sec:choice}
(but the other results in the note do not rely on this,
and no inner model theory 
appears elsewhere in the paper).
The argument
has elements in common with
 Usuba's extendibility
 proof.
 
 Schindler then 
showed that if $\kappa$ is measurable then $\Mmm_\kappa\sats\AC$,
hence $\ZFC$; see \cite{schindler_fsttimtg}.
In this note we adapt this
argument,
deducing that fragments of choice hold in $\Mmm_\kappa$
from the weak compactness and inaccessibility  of $\kappa$ respectively.

\begin{dfn}\label{dfn:(alpha,X)-Choice}
 Given an ordinal $\alpha$ and set $X$,
let \emph{$(\alpha,X)$-Choice} be the assertion
that for every function $f:\alpha\to X$,
there is a choice function for $f$.
And \emph{$({<\alpha},X)$-Choice}
is the assertion that $(\beta,X)$-Choice
holds for all $\beta<\alpha$.
\end{dfn}

Part \ref{item:4} of the following theorem applies to the kind
of functions involved in the failure of $\kappa$-$\AC$ in Lietz' example,
but now with domain ${<\kappa}$.
Note that we assume $\ZFC$ except where otherwise stated;
 \emph{$\kappa$-amenable-closure}
is defined in \ref{dfn:amenable-closure}.

\begin{tm*}[\ref{tm:inaccessible_tm}]
\inacctmwithout
\end{tm*}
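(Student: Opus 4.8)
The plan rests on two ingredients: the $\kappa$-approximation property of the individual $<\kappa$-grounds, and the directedness of the $<\kappa$-grounds below $\kappa$ recalled in the introduction. For the first, fix $r\in V_\kappa$; the forcing $\PP_r\in W_r$ with $V=W_r[G]$ has $|\PP_r|<\kappa$, so by inaccessibility $|\mathrm{RO}(\PP_r)|\le 2^{|\PP_r|}<\kappa$, and the usual Boolean-value argument shows $W_r$ has the $\kappa$-approximation and $\kappa$-cover properties over $V$: given $A\sub\lambda$ in $V$, consider $\alpha\mapsto b_\alpha:=\|\check\alpha\in\dot A\|\in\mathrm{RO}(\PP_r)$, let $z\in W_r$ pick one ordinal for each value of this map (so $|z|^{W_r}<\kappa$), and note that from $A\cap z$ the model $W_r$ recovers the generic ultrafilter on $\mathrm{RO}(\PP_r)$, hence $A$. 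With this, part (1) is bookkeeping: $V_\kappa^{\Mmm_\kappa}\sats\ZFC$ is already established in the introduction; $\her_\kappa^{\Mmm_\kappa}=V_\kappa^{\Mmm_\kappa}$ because for $x\in\Mmm_\kappa$ the inaccessibility of $\kappa$ in $V$ gives $x\in V_\kappa\iff|\trcl(x)|^V<\kappa$, while from $V_\kappa^{\Mmm_\kappa}\sats\ZFC$ (so every $V_\alpha^{\Mmm_\kappa}$, $\alpha<\kappa$, is wellordered in $\Mmm_\kappa$ in order type $<\kappa$) together with rank-correctness this is equivalent to $|\trcl(x)|^{\Mmm_\kappa}<\kappa$; and $\Mmm_\kappa\sats$``$\kappa$ is inaccessible'' since $\kappa$ is regular there and $\Mmm_\kappa\sats$``$V_\kappa\sats\ZFC$''.

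For part (2), let $A$ be $\kappa$-amenably approximated by $\Mmm_\kappa$ in the sense of Definition \ref{dfn:amenable-closure}, so every fragment $A\cap\delta$ with $\delta<\kappa$ lies in $\Mmm_\kappa$; I would show $A\cap x\in W_r$ for every $x\in\Mmm_\kappa$ and $r\in V_\kappa$, which is the full amenability that $\kappa$-amenable-closure asserts, since $\Mmm_\kappa=\bigcap_{r\in V_\kappa}W_r$. Fix $r$ and $x$; as $\kappa$ is regular in $W_r$, any $z\in W_r$ with $|z|^{W_r}<\kappa$ has the relevant part of $x\cap z$ bounded below $\kappa$, so $(A\cap x)\cap z$ is computed from some $A\cap\delta$ with $\delta<\kappa$, and $A\cap\delta\in\Mmm_\kappa\sub W_r$; thus $(A\cap x)\cap z\in W_r$ for all such $z$, and the $\kappa$-approximation property of $W_r\sub V$ gives $A\cap x\in W_r$. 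Part (3) then follows: the backward direction is trivial, and for the forward direction one works inside $\Mmm_\kappa$, where, since $\her_\kappa^{\Mmm_\kappa}=V_\kappa^{\Mmm_\kappa}\sats\ZFC$, the map $g$ sending $\alpha<\kappa$ to the nonempty set of wellorderings of $V_\alpha^{\Mmm_\kappa}$ has all proper initial segments in $V_\kappa^{\Mmm_\kappa}\sub\Mmm_\kappa$ and hence $g\in\Mmm_\kappa$ by part (2); so $g\colon\kappa\to\her_\kappa^{\Mmm_\kappa}$ is an $\Mmm_\kappa$-function, $(\kappa,\her_\kappa)$-Choice yields $c\in\Mmm_\kappa$ with $c(\alpha)$ a wellordering of $V_\alpha^{\Mmm_\kappa}$, and ordering $\her_\kappa^{\Mmm_\kappa}=\bigcup_{\alpha<\kappa}V_\alpha^{\Mmm_\kappa}$ first by least rank and then by the appropriate $c(\alpha)$ gives a wellordering in $\Mmm_\kappa$.

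Part (4) is the substantial one and should follow the pattern of Schindler's proof (\cite{schindler_fsttimtg}) that $\Mmm_\kappa\sats\AC$ when $\kappa$ is measurable, now localized to the inaccessible case. Given $\gamma<\kappa$ and $f\colon\gamma\to\her_{\kappa^+}$ in $\Mmm_\kappa$, put $Y=\trcl(\rg(f))\in\Mmm_\kappa$, a transitive set with $|Y|\le\kappa$; it is enough to produce a wellordering of $Y$ in $\Mmm_\kappa$, since a choice function for $f$ then follows immediately, as does $(\her_{\kappa^+})^{<\kappa}\sub\her_{\kappa^+}$ by applying the Choice principle to the families $\xi\mapsto\{\text{surjections }\kappa\to\trcl(f(\xi))\}$. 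Since $f,Y\in\Mmm_\kappa=\bigcap_{r\in V_\kappa}W_r$ and each $W_r$ wellorders $Y$, the idea is to use $\gamma<\kappa$ together with Usuba's directedness to reflect the small data determining $Y$ into the grounds and pin down a wellordering common to all of them. I expect the main obstacle to be exactly this coherence: the grounds' wellorderings of $Y$ vary with $r$, and $\Mmm_\kappa$ is the intersection of \emph{class}-many $<\kappa$-grounds whereas directedness merges only $<\kappa$-many at a time, so one must show that a canonically chosen wellordering of $Y$ — of least order type, and least under a fixed coding among those — is the same in every $W_r$; this is where $\gamma<\kappa$, keeping $|Y|\le\kappa$ small, does the work, and is precisely what fails for Lietz' example with $\dom(f)=\kappa$.
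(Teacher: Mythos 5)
Parts (1)--(3) of your proposal are essentially sound, if loosely worded: for (2) you are in effect re-proving Lemma~\ref{lem:ground_amenably-closed} via the $\kappa$-approximation property of a $<\kappa$-ground (the paper deduces part (2) directly from Lemmas~\ref{lem:ground_amenably-closed} and \ref{lem:kappa-amenably-closed}), and (3) uses (2) correctly.

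Part (4), which you rightly call the substantial one, has a genuine gap, and indeed the route you sketch is the wrong one. You reduce the problem to producing a wellordering of $Y=\trcl(\rg(f))$ \emph{inside} $\Mmm_\kappa$. That is a strictly stronger claim than the theorem asserts, and under mere inaccessibility of $\kappa$ it is not established (the paper proves wellorderability of sets in $\Mmm_\kappa\inter\her_{\kappa^+}$ only under \emph{weak compactness}, in Theorem~\ref{tm:weak_compact_tm}, using a compactness embedding that is unavailable here). You then acknowledge you have not solved the ``coherence'' problem, and your proposed fix --- take the wellorder of least order type, and ``least under a fixed coding'' --- is circular, since a fixed coding of $Y$ common to all the class-many $W_r$ is exactly the object whose existence in $\Mmm_\kappa$ is in doubt. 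Note also that your ancillary derivation of $(\her_{\kappa^+})^{<\kappa}\sub\her_{\kappa^+}$ from the choice principle, by applying it to $\xi\mapsto\{\text{surjections }\kappa\to\trcl(f(\xi))\}$, does not go through as stated: those sets of surjections have size $2^\kappa$ and so do not lie in $\her_{\kappa^+}$.

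The paper's actual argument for part (4) does not wellorder $Y$ and uses a different mechanism entirely, which your proposal does not touch: the $\kappa$-uniform hull construction of Lemma~\ref{lem:kappa-uniform_hull}. One builds a hull $\widetilde{X}\elem V_\lambda$ of size $\kappa$, closed under ${<\kappa}$-sequences, which meets each $W_r$ (for $r\in V_\kappa$) in an element of $W_r$; this yields, for club-many $\bar\kappa<\kappa$, a coherent system of substructures $X^{\bar\kappa}_r$ and elementary maps $\pi^{\bar\kappa}_r\in W_r$ with $\crit(\pi^{\bar\kappa}_r)=\bar\kappa$. Choosing auxiliary surjections $g_\alpha:\kappa\to\trcl(f_\alpha)$ (in $V$, not necessarily in $\Mmm_\kappa$), reflecting $f,g$ to $\bar\kappa$, and picking the $<_{\bar\kappa}$-least element of each $f^{\bar\kappa}_\alpha$ via a wellorder $<_{\bar\kappa}\in\Mmm_\kappa$ of $(\her_{\bar\kappa^+})^{\Mmm_\kappa}$, one obtains a $\gamma$-sequence of ordinals $\langle\xi^{\bar\kappa}_\alpha\rangle_{\alpha<\gamma}$ below $\bar\kappa$. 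The hypothesis $\gamma<\kappa$ is used precisely so that this sequence can be coded by a single ordinal $<\bar\kappa$ for $\bar\kappa$ of cofinality $\gamma^+$, and Fodor's lemma then yields a stationary $S'$ on which the coded choice stabilizes. The resulting choice function $d(\alpha)=\pi_{\emptyset\bar\kappa}(z^{\bar\kappa}_\alpha)$ is independent of $\bar\kappa\in S'$, and one checks $d\in W_r$ for every $r$ by picking $\bar\kappa\in S'$ large enough that $r\in V_{\bar\kappa}$ and using that $\pi^{\bar\kappa}_r, X^{\bar\kappa}_r\in W_r$. It is this pressing-down stability, not a canonical wellorder of $Y$, that makes $d$ lie in the intersection of class-many grounds; this is exactly the mechanism that breaks down when $\gamma=\kappa$, as in Lietz' example.
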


\begin{rem}
In part \ref{item:<kappa-Choice},
the ``$\kappa^+$'' and ``$\her_{\kappa^+}$''
are both in the sense of $\Mmm_\kappa$.
However, it can be that $\kappa$ is Mahlo
 and $\Mmm_\kappa\sats$``$(\kappa,\her_{\kappa^+})$-Choice
 fails, and $(\her_{\kappa^+})^\kappa\not\sub\her_{\kappa^+}$'';
 indeed, note that this occurs in Lietz' example $L[G]$ mentioned above.
\end{rem}

In the following theorem, the initial observation
that $\Mmm_\kappa\sats$``$\her_\kappa$ is 
wellordered'' was due to Lietz:

\begin{tm*}[\ref{tm:weak_compact_tm}] \footnote{Regarding 
part \ref{item:<kappa+-DC},
the author initially
 observed that a variant
 of Schindler's argument gives that $\Mmm_\kappa\sats\kappa$-$\DC$,
 and then  Lietz and the author independently noticed
 that the argument can be adjusted to
 show that every
 set in $\her_{\kappa^+}\inter\Mmm_\kappa$ is wellordered
 in $\Mmm_\kappa$.}
\weakcompacttmwithout
\end{tm*}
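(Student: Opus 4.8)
The plan is to adapt Schindler's argument that $\Mmm_\kappa\sats\AC$ when $\kappa$ is measurable --- which runs through the ultrapower $j\colon V\to M$ by a normal measure and the mantle $\Mmm_{j(\kappa)}^M$ of the target model --- replacing that one global ultrapower by weakly compact embeddings on suitable $\kappa$-models. First, since $\kappa$ is weakly compact it is inaccessible, hence a strong limit, so by the set-directedness of the grounds and its locality (Usuba, as recalled in the introduction, with the strong limit parameter taken to be $\kappa$) every $<\kappa$-ground is $W_r$ for some $r\in\her_\kappa=V_\kappa$, and the $<\kappa$-grounds are $<\kappa$-directed; as there are at most $|V_\kappa|=\kappa$ of them, fix a $\sub$-decreasing cofinal sequence $\langle W_{s_\xi}\rangle_{\xi<\kappa}$ with $\Mmm_\kappa=\bigcap_{\xi<\kappa}W_{s_\xi}$. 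Each step from $W_{s_\xi}$ to $V$ is small forcing, so by L\'evy--Solovay, $\kappa$ remains inaccessible --- indeed weakly compact --- in each $W_{s_\xi}$; any set of $V$-cardinality $\le\kappa$ lying in $W_{s_\xi}$ has $W_{s_\xi}$-cardinality $\le\kappa$; and any normal $V$-measure on $\kappa$ restricts to one belonging to $W_{s_\xi}$. Thus the ``size $\le\kappa$'' structure around $\kappa$ is essentially the same in $V$ and in all $W_{s_\xi}$, and the argument localizes.

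For the first two parts the plan is uniform. Given the relevant object --- a transitive $A\in\her_{\kappa^+}\inter\Mmm_\kappa$ (for part~2), an instance of $\kappa$-$\DC$ or a tree coded in $\Mmm_\kappa$ (for the $\kappa$-$\DC$ part of part~1), or a $\kappa$-model in the sense of $\Mmm_\kappa$ (for the weak compactness part of part~1) --- form a transitive $\kappa$-model $\bar M$ (so $|\bar M|=\kappa$, $\kappa\in\bar M$, ${}^{<\kappa}\bar M\sub\bar M$, $\bar M\sats\ZFC^-$) with $V_\kappa\cup\trcl(A)\cup\{A,\kappa\}\sub\bar M$, realized as the transitive collapse of an elementary submodel of some large $\her_\theta$, arranged so that the collapse fixes $V_\kappa$ and $\trcl(A)$ and $\bar M$ correctly computes the $<\kappa$-grounds below the ranks in play; then $\bar M$ sees $\Mmm_\kappa=\bigcap_{\xi<\kappa}W_{s_\xi}$ and, e.g., $\bar M\sats$``$A\in\Mmm_\kappa$''. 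Weak compactness yields an elementary $j\colon\bar M\to N$ with $N$ a transitive $\kappa$-model and $\crit(j)=\kappa$. As in Schindler's proof, the L\'evy--Solovay factorizations of the small grounds push through $j$: for each $t\in V_\kappa$ (fixed by $j$), $N$ is a small generic extension of a $<j(\kappa)$-ground $W_t^N$, and because $j\rest V_\kappa=\mathrm{id}$ these cohere into an elementary $j_*\colon\Mmm_\kappa^{\bar M}\to\Mmm_{j(\kappa)}^N$ with $\crit(j_*)=\kappa$. One then reads the conclusion off from $j_*$, the closure of $N$, and the correctness of $\bar M$: $j(A)\in\Mmm_{j(\kappa)}^N$, and inside the $<\kappa$-closed model $N$ one exhibits, inside $\Mmm_{j(\kappa)}^N$, the required wellorder of $j(A)$ (resp.\ choice function or tree branch, resp.\ embedding witnessing that $\kappa$ is weakly compact), and pulls it back along $j$. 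I expect the crux to be exactly this extraction: producing the witness inside the \emph{intersection} $\bigcap_{t\in V_\kappa}W_t^N$ of the small grounds of $N$, not merely in each $W_t^N$ separately. This is where $\crit(j)=\kappa$ and the canonicity of the factorization are used, and it is precisely where weak compactness (as opposed to mere inaccessibility) enters --- consistently with Lietz's counterexample at a Mahlo cardinal. A secondary technical point is to set up the elementary submodel so that the collapse fixes $\trcl(A)$ and $\bar M$ is genuinely correct about $\Mmm_\kappa$ near $A$.

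The remaining two parts follow by more standard arguments. For part~3: if $\pow(\kappa)^{\Mmm_\kappa}$ has $V$-cardinality $\kappa$, then by parts~1 and~2, $\Mmm_\kappa\sats$``$\kappa$ is weakly compact'' and $\pow(\kappa)^{\Mmm_\kappa}\in\Mmm_\kappa$ is wellorderable there; running the weakly-compact-embedding argument \emph{internally} to $\Mmm_\kappa$ on the structure carried by $\pow(\kappa)^{\Mmm_\kappa}$ yields a normal $\Mmm_\kappa$-measure on $\kappa$, giving~(i); and for~(ii), given $x\in\pow(\kappa)^{\Mmm_\kappa}$, the associated embedding restricts to a nontrivial elementary map on $L[x]$, so $x^\#$ exists (and, being canonical, lies in every $W_{s_\xi}$, hence in $\Mmm_\kappa$). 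For part~4: $\DC$ in $\Mmm_\kappa$ makes countable completeness of $\mu$ robust; wellfoundedness of $\Ult(\Mmm_\kappa,\mu)$ is obtained by capturing any putative descending $\om$-sequence inside a $<\kappa$-closed model $\bar M$ as above, in which countable completeness is literal and the relevant ultrapower genuinely wellfounded; and full elementarity of $i^{\Mmm_\kappa}_\mu$ then follows from the {\L}o{\'s} theorem, the required choices of witnesses over $\gamma\le\kappa$ being supplied by part~2.
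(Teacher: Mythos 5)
Your high-level plan is the right one: replace Schindler's global normal ultrapower with a weak-compactness embedding on a $\kappa$-model, and you correctly observe that Usuba's local set-directedness will supply a ground of the target model sitting between its $\kappa$-mantle and its $j(\kappa)$-mantle. But there is a genuine gap, and it is precisely where you say you "expect the crux to be". The crux is \emph{not} extracting the witness from the intersection $\bigcap_{t\in V_\kappa}W_t^N$ (that is handled, as you guessed, by directedness inside $N$); it is making the embedding itself, and the $N$-side mantles, visible to $\Mmm_\kappa$. In Schindler's argument this is free: $j=i^V_\mu$ is a full ultrapower, so $\mu\inter W_r\in W_r$ and $j\rest W_r$ is amenable to $W_r$ automatically. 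In the weakly compact case you only have an $\bar M$-ultrafilter $\mu$, and $\mu\inter W_r$ will lie in $W_r$ only if $\bar M\inter W_r\in W_r$. A transitive collapse of a random $\prec\her_\theta$ hull closed under ${<}\kappa$-sequences gives you no such control, and then neither $j\rest\Mmm_\kappa^{\bar M}$ nor $\Mmm_{j(\kappa)}^N$ can be seen to lie in each $W_r$, so you cannot carry out the final step (defining $x<_A y\iff j_*(x)<^*j_*(y)$) \emph{inside} $\Mmm_\kappa$. This is what the paper's Lemma \ref{lem:kappa-uniform_hull} ($\kappa$-uniform hulls) does: it builds $\widetilde X\prec V_\lambda$ simultaneously with the requirement that $\widetilde X\inter W_r\in W_r$ for all $r\in V_\kappa$, with ${<}\kappa$-closure, and this takes real work (a diagonal back-and-forth over all $r\in V_\kappa$, using name arguments to keep closure). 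Your closing "secondary technical point" about the collapse fixing $\trcl(A)$ and $\bar M$ being "correct about $\Mmm_\kappa$" gestures in a related direction but misses the actual requirement, which is absorption of the hull (and thence of $\mu$, $j$, and $\Mmm_\kappa^N$) into each ground.

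Two smaller remarks. For part \ref{item:small_power}(i) your route (run a weak-compactness-embedding argument \emph{inside} $\Mmm_\kappa$, using parts 1--2 to wellorder $\pow(\kappa)^{\Mmm_\kappa}$ and supply $\kappa$-$\DC$) is genuinely different from the paper's, which instead observes that once $A=\pow(\kappa)^{\Mmm_\kappa}$ the map $\pi'\in\Mmm_\kappa$ is already total on $\pow(\kappa)^{\Mmm_\kappa}$ and so directly yields a measure in $\Mmm_\kappa$; your version is plausible but needs care about which form of weak compactness is available in $\Mmm_\kappa$ (the paper only proves the tree property there, via $\kappa$-amenable-closure, and deriving the embedding characterization from it requires the wellorderings you are simultaneously invoking). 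For the statement that $\Mmm_\kappa\sats$``$V_\kappa$ is wellordered'', the paper uses Lietz's short internal argument (tree of partial wellorders of $V_\kappa$ plus the tree property in $\Mmm_\kappa$), which is cleaner than routing it through an external $\kappa$-model.
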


As a corollary to Schindler's proof, one easily gets:

\begin{fact*}[\ref{cor:meas}]
\textthmmeas
\end{fact*}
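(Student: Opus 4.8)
The plan is to deduce this from Schindler's theorem that $\Mmm_\kappa\sats\AC$ (hence $\ZFC$, so in particular ``$V_{\kappa+1}$ is wellorderable'') for measurable $\kappa$, by reflecting through the normal measure. Fix $j\colon V\to M=\Ult(V,\mu)$, so that $\crit(j)=\kappa$ and ${}^\kappa M\sub M$; then $(V_{\kappa+1})^M=V_{\kappa+1}$, $\kappa^{+M}=\kappa^{+}$, and $j\rest V_\kappa=\mathrm{id}$. Let $X$ be the set of $\gamma<\kappa$ with $\Mmm_\gamma\sats$``$V_{\gamma+1}$ is wellorderable''. By \L{}o\'s's theorem $X\in\mu$ iff $\kappa\in j(X)$, that is, iff $(\Mmm_\kappa)^M\sats$``$V_{\kappa+1}$ is wellorderable'', where $(\Mmm_\kappa)^M$ denotes the $\kappa$-mantle computed inside $M$. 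So the whole problem reduces to producing, inside $(\Mmm_\kappa)^M$, a wellordering relation on $V_{\kappa+1}^{(\Mmm_\kappa)^M}$.

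Since weak compactness --- indeed inaccessibility --- of $\kappa$ is witnessed in $V_{\kappa+1}$, it is absolute between $V$ and $M$, so Theorems~\ref{tm:inaccessible_tm} and~\ref{tm:weak_compact_tm} apply inside $M$ at $\kappa$. Suppose first that $\pow(\kappa)^{(\Mmm_\kappa)^M}$ has $M$-cardinality $\kappa$; then it lies in $\her_{\kappa^+}^M$ (and in $(\Mmm_\kappa)^M$), so by part~(2) of Theorem~\ref{tm:weak_compact_tm}, applied in $M$, it is wellordered in $(\Mmm_\kappa)^M$. By part~(1) of Theorem~\ref{tm:inaccessible_tm} in $M$, $V_\kappa^{(\Mmm_\kappa)^M}$ is a model of $\ZFC$ and $(\Mmm_\kappa)^M\sats$``$\kappa$ is inaccessible'', so inside $(\Mmm_\kappa)^M$ there is a bijection $V_\kappa\to\kappa$; it induces a bijection $V_{\kappa+1}^{(\Mmm_\kappa)^M}=\pow(V_\kappa)^{(\Mmm_\kappa)^M}\to\pow(\kappa)^{(\Mmm_\kappa)^M}$ in $(\Mmm_\kappa)^M$, along which the wellorder just obtained transports, as required.

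It remains to treat the case in which $\pow(\kappa)^{(\Mmm_\kappa)^M}$ has $M$-cardinality $>\kappa$; here I would transport a Schindler wellorder of $V_{\kappa+1}$ from $\Mmm_\kappa$ (computed in $V$) to $(\Mmm_\kappa)^M$, using the L\'evy--Solovay analysis. Given a $<\kappa$-ground $W=W_r$ of $V$, say $V=W[G]$ with $\PP\in W$ of $W$-cardinality $<\kappa$, the measure $\bar\mu:=\mu\inter W$ lies in $W$ and is normal on $\kappa$ there; writing $\bar\jmath\colon W\to\bar M:=\Ult(W,\bar\mu)$, L\'evy--Solovay gives $\bar M\sub M$, $M=\bar M[G]$, and $j\rest W=\bar\jmath$, so $\bar M$ is a $<\kappa$-ground of $M$; hence $(\Mmm_\kappa)^M\sub\bigcap_r\bar M_r\sub\bigcap_r W_r=\Mmm_\kappa$. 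Moreover $\bigcap_r\bar M_r$ and $\Mmm_\kappa$ agree on $V_{\kappa+1}$: as $\crit(\bar\jmath)=\kappa$, any $A\sub V_\kappa^{W}$ with $A\in W$ satisfies $\bar\jmath(A)\inter V_\kappa^{W}=A$, hence $A=\bar\jmath(A)\inter V_\kappa^{\bar M}\in\bar M$ (using $V_\kappa^{\bar M}=V_\kappa^{W}\in\bar M$, $\kappa$ being inaccessible in $\bar M$); thus $V_{\kappa+1}^{\bar M}=V_{\kappa+1}^{W}$, and, using absoluteness of rank together with routine $<\kappa$-forcing absoluteness, $\Mmm_\kappa\inter V_{\kappa+1}=\bigcap_r V_{\kappa+1}^{W_r}=\bigcap_r V_{\kappa+1}^{\bar M_r}$. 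If in addition no $<\kappa$-ground of $M$ shrinks $V_{\kappa+1}$, or the first few levels above it, below what the $\bar M_r$ do, then $(\Mmm_\kappa)^M$ agrees with $\Mmm_\kappa$ up to the rank of a wellordering relation on $V_{\kappa+1}$, and such a relation, obtained inside $\Mmm_\kappa$ from Schindler's theorem, then belongs to $(\Mmm_\kappa)^M$, completing the reduction. I expect the main obstacle to be precisely this: pinning down the $<\kappa$-grounds of $M$ in terms of the L\'evy--Solovay ultrapowers $\bar M_r$, and showing that the subsets discarded by an ultrapower by a measure on $\kappa$ --- its own measure, and the like --- are not in $\Mmm_\kappa$, so that $(\Mmm_\kappa)^M$ and $\Mmm_\kappa$ agree not merely at $V_{\kappa+1}$ but a few levels above. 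The cleanest way to settle all of this at once would be to prove outright that $(\Mmm_\kappa)^M=\Mmm_\kappa$.
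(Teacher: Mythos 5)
Your reduction via \L{}o\'s to the statement $\Mmm_\kappa^M\sats$``$V_{\kappa+1}$ is wellorderable'' is exactly the paper's first step, and your L\'evy--Solovay observation that $\Mmm_\kappa^M\sub\Mmm_\kappa$ with agreement at $V_{\kappa+1}$ is also there (it is precisely the claim proved inside the paper's proof of Fact~\ref{fact:meas}). But you then stall: you have a wellorder of $V_{\kappa+1}^{\Mmm_\kappa^M}$ lying in $\Mmm_\kappa$, and no argument that it (or any substitute) lies in $\Mmm_\kappa^M$. Your two-case split doesn't repair this; Case~2 you explicitly leave open, and the ``cleanest way out'' you propose ($\Mmm_\kappa^M=\Mmm_\kappa$) is both unproved and not what the paper proves.

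The missing idea is to pass to $\Mmm_{j(\kappa)}^M$ rather than trying to import a wellorder from $\Mmm_\kappa$. Since $j\rest\Mmm_\kappa\colon\Mmm_\kappa\to\Mmm_{j(\kappa)}^M$ is elementary (it is $j$ restricted to a class definable from $\kappa$) with critical point $\kappa$, and $\Mmm_\kappa\sats\AC$ by Schindler, one gets: (i) $\Mmm_{j(\kappa)}^M\sats\AC$; (ii) $V_{\kappa+1}\inter\Mmm_\kappa\sub V_{\kappa+1}\inter\Mmm_{j(\kappa)}^M$; and, combined with the inclusion chain $\Mmm_{j(\kappa)}^M\sub\Mmm_\kappa^M\sub\Mmm_\kappa$ from Claim~\ref{clm:first_meas}, that all three models compute the same $V_{\kappa+1}$. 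Now a wellorder of $V_{\kappa+1}\inter\Mmm_{j(\kappa)}^M$ taken inside $\Mmm_{j(\kappa)}^M$ is automatically an element of $\Mmm_\kappa^M$ and wellorders $V_{\kappa+1}\inter\Mmm_\kappa^M$, and you are done in one stroke, with no case split and no appeal to Theorems~\ref{tm:inaccessible_tm} or~\ref{tm:weak_compact_tm}. The object you needed but never introduced is $\Mmm_{j(\kappa)}^M$, whose image under $j$ from $\Mmm_\kappa$ sits \emph{inside} the model you are trying to wellorder things in.
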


As mentioned above, Usuba showed that $\Mmm=\Mmm_\kappa$ assuming $\kappa$ is 
extendible.
The next result indicates that there are signs of this
in the leadup to an extendible cardinal
(for the definition of a \emph{$\Sigma_2$-strong} cardinal,
see \ref{dfn:Sigma_2-strong}):

\begin{tm*}[\ref{tm:Sigma_2-strong}]
\Sigmatwostrong
\end{tm*}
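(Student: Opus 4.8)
The plan is to localize Usuba's proof that $\Mmm=\Mmm_\kappa$ for $\kappa$ extendible, using that $\Sigma_2$-strength already supplies enough reflection to pin down $V_{\kappa+1}$. Since $\kappa$ is in particular a strong limit, $\Mmm_\kappa\sats\ZF$, so $V_\kappa^{\Mmm_\kappa}$ and $V_{\kappa+1}^{\Mmm_\kappa}$ make sense and von Neumann rank is absolute between $V$, $\Mmm$ and $\Mmm_\kappa$. The inclusion $V_{\kappa+1}^\Mmm\sub V_{\kappa+1}^{\Mmm_\kappa}$ is then immediate: every ${<\kappa}$-ground is a ground, so $\Mmm\sub\Mmm_\kappa$, and any $x\in\Mmm$ with $\rg(x)\le\kappa$ lies in $\Mmm_\kappa$ with all its members of rank ${<\kappa}$, hence $x\in\pow(V_\kappa^{\Mmm_\kappa})\inter\Mmm_\kappa=V_{\kappa+1}^{\Mmm_\kappa}$. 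So everything is in showing $V_{\kappa+1}^{\Mmm_\kappa}\sub\Mmm$; since any member of $V_{\kappa+1}^{\Mmm_\kappa}$ has rank $\le\kappa$, it is then automatically a subset of $V_\kappa^\Mmm$ by transitivity and so lands in $V_{\kappa+1}^\Mmm$. Thus it suffices to fix a ground $W_{r_0}$ and show $V_{\kappa+1}^{\Mmm_\kappa}\sub W_{r_0}$.

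So fix $W_{r_0}$, say $V=W_{r_0}[G]$ for a $(W_{r_0},\PP)$-generic $G$ with $\delta:=|\PP|^{W_{r_0}}$; we may assume $\delta\ge\kappa$, since otherwise $W_{r_0}$ is itself a ${<\kappa}$-ground and already $\Mmm_\kappa\sub W_{r_0}$. By $\Sigma_2$-strength (Definition~\ref{dfn:Sigma_2-strong}), fix an elementary $j\colon V\to M$ with $\crit(j)=\kappa$, $V_\theta\sub M$, $j(\kappa)>\theta$, and $M$ computing $\Sigma_2$-truth correctly for parameters in $V_\theta$, where $\theta$ is taken large enough that $\PP,G,r_0\in V_\theta$ (so in particular $j(\kappa)>\delta$). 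The crucial point, replacing the use of extendibility, is that $W_{r_0}\inter M$ is a \emph{${<}j(\kappa)$-ground of $M$}: grounds are uniformly $\Sigma_2$-definable, $M=(W_{r_0}\inter M)[G]$, and $W_{r_0}$'s $\delta$-cover and $\delta$-approximation properties for $V$ descend to $W_{r_0}\inter M$ for $M$, so by Bukovsky's characterization $W_{r_0}\inter M$ is a ground of $M$ via $\PP$, and $|\PP|=\delta<j(\kappa)$. This is exactly where the gap between $\kappa$ and $\delta$ is absorbed: $\PP$ is too big to witness a ${<\kappa}$-ground, but it does witness a ${<}j(\kappa)$-ground in $M$. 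Consequently $j(\Mmm_\kappa)=\Mmm_{j(\kappa)}^M\sub W_{r_0}\inter M\sub W_{r_0}$.

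Now finish in two steps. For $y\in\Mmm_\kappa$ with $\rg(y)<\kappa$ we have $j(y)=y$, so $y=j(y)\in j(\Mmm_\kappa)\sub W_{r_0}$; hence $V_\kappa^{\Mmm_\kappa}\sub W_{r_0}$, and since its members have rank ${<\kappa}$ in fact $V_\kappa^{\Mmm_\kappa}\sub V_\kappa^{W_{r_0}}$. Next, let $x\in V_{\kappa+1}^{\Mmm_\kappa}$; by transitivity of $\Mmm_\kappa$ and the previous line, $x\sub V_\kappa^{\Mmm_\kappa}\sub V_\kappa^{W_{r_0}}$, while by elementarity $j(x)\in j(\Mmm_\kappa)\sub W_{r_0}$ and $j(x)\inter V_\kappa=x$ (as $\crit(j)=\kappa$). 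Since $x\sub V_\kappa^{W_{r_0}}\sub V_\kappa$, this gives $x=j(x)\inter V_\kappa^{W_{r_0}}$, which is an element of $W_{r_0}$ because $j(x)\in W_{r_0}$ and $V_\kappa^{W_{r_0}}\in W_{r_0}$. As the ground $W_{r_0}$ was arbitrary, $x\in\Mmm$, as required.

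The hard part will be the verification that $W_{r_0}\inter M$ really is a ${<}j(\kappa)$-ground of $M$ --- that $M$, which agrees with $V$ only up to $V_\theta$ and on $\Sigma_2$-truth, still sees $W_{r_0}$ as a ground via $\PP$: one needs $M=(W_{r_0}\inter M)[G]$ and the localization of the $\delta$-approximation and $\delta$-cover properties, after which Bukovsky's theorem applies. This is routine given the uniform $\Sigma_2$-definability of grounds and a large, closed choice of $\theta$ (it is implicit in work on grounds of inner models), but it is the one place the precise form of Definition~\ref{dfn:Sigma_2-strong} matters; it is also why the conclusion is confined to $V_{\kappa+1}$ --- for $x$ of rank $>\kappa$ the argument breaks, since $j$ moves $x$ and there is no formula recovering $x$ from $j(x)\in W_{r_0}$.
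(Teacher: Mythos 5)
Your overall strategy is the right one and matches the spirit of the paper's proof (localize the extendibility argument via a $\Sigma_2$-strength embedding $j:V\to M$, then pull $V_{\kappa+1}^{\Mmm_\kappa}$ into $W_{r_0}$ using $\crit(j)=\kappa$). But the step you flag as ``the hard part'' --- that $W_{r_0}\inter M$ is a ${<}j(\kappa)$-ground of $M$ --- is a genuine gap, not a routine verification, and the paper sidesteps it entirely rather than proving it. The problem is that $W_{r_0}\inter M$ is a literal intersection rather than $M$'s own notion of that ground, and there is no reason that $M=(W_{r_0}\inter M)[G]$: for $x\in M$ of rank above $\theta$, a $\PP$-name $\dot x\in W_{r_0}$ for $x$ need not lie in $M$, since $M$ and $V$ agree only up to $V_\theta$ and on $\Sigma_2$-truth of parameters in $V_\theta$. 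Likewise, the $\delta$-cover property does not obviously descend to $(W_{r_0}\inter M, M)$: given $A\in M$ with $A\sub W_{r_0}\inter M$ and $|A|<\delta$, a cover $B\in W_{r_0}$ supplied by the $V$-cover property need not be in $M$. So Bukovsky's criterion cannot be invoked as stated.

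The paper's proof avoids all of this by never touching $W_{r_0}\inter M$. It works instead with $W_r^M$, the ground that $M$ \emph{defines} from the index $r$. Two features of the setup make this immediate: (1) ``$r$ is a true index'' is a $\Pi_2$ statement with parameter $r\in V_\lambda$, so it transfers from $V$ to $M$ by the $\Sigma_2$-agreement; hence $W_r^M$ exists, is a ground of $M$, and since $r\in V_\lambda\sub V_{j(\kappa)}^M$ it is a ${<}j(\kappa)$-ground, giving $\Mmm_{j(\kappa)}^M\sub W_r^M$. (2) By the local definability of grounds (Fact~\ref{fact:local_ground_def}) together with $V_\lambda^M=V_\lambda$ and the $\Sigma_2$-agreement, $W_r^M\inter V_\lambda = W_r\inter V_\lambda$; in particular $V_{\kappa+1}^{W_r^M}=V_{\kappa+1}^{W_r}$, which is all the agreement needed because the objects under consideration have rank $\le\kappa$. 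Nothing about $W_r^M$ agreeing with $W_{r_0}$ above $V_\lambda$, or about $W_{r_0}\inter M$ being a ground, is required. The paper also phrases this as a contradiction (assume $V_{\kappa+1}^{W_r}\psub V_{\kappa+1}^{\Mmm_\kappa}$ and combine $\Mmm_\kappa\inter V_{\kappa+1}\sub\Mmm_{j(\kappa)}^M\inter V_{\kappa+1}\sub W_r^M\inter V_{\kappa+1}=W_r\inter V_{\kappa+1}$), but that is cosmetic; the substantive point is the substitution of the definable, $\Sigma_2$-transferred ground $W_r^M$ for the set-theoretic intersection $W_{r_0}\inter M$. Your proof would be repaired by making the same substitution; as written, the key lemma you rely on is unjustified and plausibly false.
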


Analogously, down lower:

\begin{tm*}[\ref{tm:sharp}]
\textthmsharp
\end{tm*}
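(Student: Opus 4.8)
Throughout, work inside the inner model $N:=L(A)$, exploiting the nontrivial elementary self-embeddings of $N$ coming from $A^\#$. Let $I$ be the closed unbounded proper class of $A$-indiscernibles, so $\kappa\in I$; a standard consequence of the existence of $A^\#$ is that every order-preserving injection of $I$ into itself extends to an elementary embedding $N\to N$ whose critical point is the least indiscernible it moves, and which fixes every ordinal below that point. Hence for each ordinal $\lambda$ there is an elementary $j\colon N\to N$ with $\crit(j)=\kappa$ and $j(\kappa)>\lambda$: take $\kappa'\in I$ with $\kappa'>\lambda$ and a shift of $I$ that fixes $I\cap\kappa$, sends $\kappa\mapsto\kappa'$, and maps $I\cap(\kappa,\OR)$ order-isomorphically into $I\cap(\kappa',\OR)$. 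Recall also that $\kappa$ is inaccessible (indeed much more) in $N$, that $\Mmm^{N}\models\ZFC$ by Usuba, that $\Mmm^{N}\subseteq\Mmm^{N}_\kappa\subseteq N$, and that $\Mmm^{N}=\bigcap_\mu\Mmm^{N}_\mu$ since every ground is a ${<}\mu$-ground for some $\mu$.

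Since rank is absolute, $V_{\kappa+1}^{\Mmm^{N}}\subseteq V_{\kappa+1}^{\Mmm^{N}_\kappa}$ is immediate from $\Mmm^{N}\subseteq\Mmm^{N}_\kappa$, so the work is the converse: every $x\in\Mmm^{N}_\kappa$ with $\mathrm{rank}(x)\le\kappa$ lies in $\Mmm^{N}$, i.e.\ in every ground of $N$. First suppose $\mathrm{rank}(x)<\kappa$. Given any ordinal $\mu$, pick $j$ as above with $j(\kappa)>\mu$; by elementarity $j(x)\in\Mmm^{N}_{j(\kappa)}$, while $j(x)=x$ since $\mathrm{rank}(x)<\crit(j)$. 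Thus $x\in\Mmm^{N}_{j(\kappa)}\subseteq\Mmm^{N}_\mu$, and as $\mu$ was arbitrary, $x\in\bigcap_\mu\Mmm^{N}_\mu=\Mmm^{N}$.

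Now suppose $\mathrm{rank}(x)=\kappa$, so $x\subseteq V_\kappa^{N}$. Each member of $x$ has rank ${<}\kappa$ and lies in the transitive class $\Mmm^{N}_\kappa$, hence in $\Mmm^{N}$ by the previous paragraph, so $x\subseteq\Mmm^{N}$. Let $W$ be any ground of $N$, say $N=W[G]$ with $G$ generic for some $\PP\in W$ of $W$-cardinality $\lambda$, and pick $j$ as above with $j(\kappa)>\lambda$, so $W$ is a ${<}j(\kappa)$-ground of $N$. By elementarity $j(x)\in\Mmm^{N}_{j(\kappa)}$, and $\Mmm^{N}_{j(\kappa)}\subseteq W$, so $j(x)\in W$. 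Since $\crit(j)=\kappa$ and $x\subseteq V_\kappa^{N}$ we have $j(x)\cap V_\kappa^{N}=x$; and since $x\subseteq\Mmm^{N}\subseteq W$ with all members of $x$ of rank ${<}\kappa$, also $x\subseteq V_\kappa^{W}\subseteq V_\kappa^{N}$, so $j(x)\cap V_\kappa^{W}=x$. As $j(x)\in W$ and $V_\kappa^{W}\in W$, this realizes $x$ as an element of $W$. Hence $x\in W$ for every ground $W$, i.e.\ $x\in\Mmm^{N}$, completing the proof that $V_{\kappa+1}^{\Mmm^{N}_\kappa}=V_{\kappa+1}^{\Mmm^{N}}$. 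Finally, since $\Mmm^{N}\models\ZFC$ there is $\prec\in\Mmm^{N}$ wellordering $V_{\kappa+1}^{\Mmm^{N}}$; then $\prec\in\Mmm^{N}\subseteq\Mmm^{N}_\kappa$, and since linearity and wellfoundedness of a set relation are absolute between transitive models, $\prec$ remains a wellorder of $V_{\kappa+1}^{\Mmm^{N}_\kappa}=V_{\kappa+1}^{\Mmm^{N}}$ in $\Mmm^{N}_\kappa$.

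The step I expect to be the crux is the descent in the third paragraph. An arbitrary ground $W$ of $N$ that is not itself a ${<}\kappa$-ground need not contain $V_\kappa^{N}$, since even a small fragment of the generic $G$ can add new subsets of ordinals below $\kappa$; so one cannot simply recover $x$ inside $W$ as $j(x)\cap V_\kappa^{N}$. The resolution is that $W$ always contains its own $V_\kappa^{W}$, and $j(x)\cap V_\kappa^{W}$ already equals $x$ --- but verifying this requires $x\subseteq W$, which is exactly what the rank-${<}\kappa$ case delivers. The two cases therefore have to be taken in this order, and the freedom to push $j(\kappa)$ past any prescribed ordinal is used both to force $j(x)$ into $W$ and, in the rank-${<}\kappa$ case, to force $x$ into every $\Mmm^{N}_\mu$.
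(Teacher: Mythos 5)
Your proof is correct and follows essentially the same route as the paper's: stretch $\kappa$ arbitrarily high via elementary $j\colon L(A)\to L(A)$ with $\crit(j)=\kappa$ coming from $A^\#$, observe that $j(x)\in\Mmm_{j(\kappa)}^{L(A)}$ which is contained in every ${<}j(\kappa)$-ground, and recover $x$ from $j(x)$ using that $j$ acts trivially below $\kappa$. The only cosmetic differences are that the paper packages your two-case rank argument into the single observation $V_{\kappa+1}\cap\Mmm_\kappa\subseteq V_{\kappa+1}\cap\Mmm_{j(\kappa)}$ (exactly as in the claim in the proof of Corollary \ref{cor:meas}, intersecting $j(x)$ with $V_\kappa^{\Mmm_{j(\kappa)}}$ rather than with $V_\kappa^W$ as you do), and that it obtains the wellorder from $V_{j(\kappa)}^{\Mmm_{j(\kappa)}}\models\ZFC$ at a single indiscernible rather than from Usuba's $\Mmm^{L(A)}\models\ZFC$ applied after establishing $V_{\kappa+1}^{\Mmm_\kappa}=V_{\kappa+1}^{\Mmm}$.
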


For further related results, which involve
some inner model theory, see  \cite{local_mantles_in_Lx}.

Before beginning our discussion of these results,
we go through some background set-theoretic geology,
including a proof of the the definability of grounds
from a theory modelled by $\her_\kappa$ whenever
$\kappa$ is a strong limit cardinal.

\section{Grounds and mantles}\label{sec:grounds}
We discuss here some background, starting with the key
fact of the definability of set-forcing grounds under $\ZFC$,
proved by some combination of Laver, Woodin and Hamkins:

\begin{fact}
Let $M,N$ be proper class
transitive inner models of $\ZFC$
and $\gamma\in\OR$ with $\pow(\gamma)\inter M=\pow(\gamma)\inter 
N$.
Let $\PP\in M$ and $\QQ\in N$, with $\PP,\QQ\sub\gamma$,
and let $G$ be $(M,\PP)$-generic and $H$ be $(N,\QQ)$-generic
and suppose $M[G]=N[H]=V$. Then $M=N$.
\end{fact}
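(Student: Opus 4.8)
The plan is to prove this by the standard ground‑uniqueness method, via Hamkins' approximation and cover properties. First I would record some reductions and closure facts. Replacing $\gamma$ by $\card(\gamma)^V$ if need be — this only shrinks $\pow(\gamma)$ on each side and leaves $\PP,\QQ$ of cardinality ${\leq}\gamma$ — we may assume $\gamma$ is an infinite cardinal of $V$, hence of $M$ and of $N$. Since $\PP,\QQ\sub\gamma$, both forcings, together with their orderings (each a subset of $\gamma\cross\gamma\cong\gamma$), lie in $\her_{\gamma^+}^M=\her_{\gamma^+}^N$; this equality holds because $\pow(\gamma)\inter M=\pow(\gamma)\inter N$ and every set hereditarily of cardinality ${\leq}\gamma$ is coded by a subset of $\gamma$. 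In particular $\PP,\QQ\in M\inter N$, and for $D\sub\gamma$ the assertion ``$D$ is dense in $\PP$'' (resp.\ in $\QQ$) is absolute between $M$ and $N$, so $G$ is also $(N,\PP)$-generic and $H$ is also $(M,\QQ)$-generic. Finally, $\PP$ and $\QQ$ are $\gamma^+$-c.c.\ in their respective models, whence $(\gamma^+)^M=(\gamma^+)^V=(\gamma^+)^N=:\delta$, a regular cardinal.

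Next I would establish that each of $M$ and $N$ has the $\delta$-cover and $\delta$-approximation properties in $V$. The cover property — every $X\sub\OR$ lying in $V$ with $\card(X)^V<\delta$ is contained in some $Y\in M$ with $\card(Y)^M<\delta$ — is immediate from the $\gamma^+$-c.c.\ of $\PP$, since a name for a surjection $\gamma\to X$ has, by that chain condition, only ${\leq}\gamma$ candidate values at each argument, all lying in $M$. The $\delta$-approximation property — if $A\sub\OR$ lies in $V$ and $A\inter z\in M$ for every $z\in M$ with $\card(z)^M<\delta$, then $A\in M$ — is the substantive ingredient; here I would quote, or reprove, Hamkins' lemma that a forcing whose square has the $\delta$-c.c.\ (which holds here, as $\card(\PP)\leq\gamma<\delta$) gives the $\delta$-approximation property for the pair $M\sub M[G]$. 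In reproving it, one takes a name witnessing failure and, by the usual tree-of-conditions construction, extracts an antichain of size $\delta$ in $\PP\cross\PP$, contradicting the chain condition.

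Finally I would apply the Laver--Woodin uniqueness lemma: two transitive class models of $\ZFC$ that both have the $\delta$-cover and $\delta$-approximation properties in $V$, and that share the same $\her_{\gamma^+}$ (equivalently, the same subsets of $\gamma$), must coincide. This I would either cite or reprove by the usual induction on an ordinal $\lambda$ showing $\pow(\lambda)\inter M=\pow(\lambda)\inter N$ — the agreement being propagated upward from $\her_{\gamma^+}$ by alternating uses of the cover and approximation properties, with the routine-but-fussy care over G\"odel codings of the auxiliary maps (restricting attention to G\"odel-closed $\lambda$ is convenient). Applied to $M$ and $N$ this yields $M=N$, since, both modelling $\ZFC$, each is determined by its sets of ordinals (every element being coded by a wellorder of its transitive closure). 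The one genuinely deep ingredient in all of this is the $\delta$-approximation property for the small forcings $\PP$ and $\QQ$; that is where I expect the main work to lie.
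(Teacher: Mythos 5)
Your proposal is correct and follows essentially the same route as the paper: the Laver--Woodin--Hamkins ground-uniqueness argument via the cover and approximation properties for the small forcings $\PP$ and $\QQ$, with agreement propagated upward from $\her_{\gamma^+}$. The paper just inlines those ingredients rather than citing them by name (its ``Claim 2'' is the $\gamma^+$-approximation property, proved by a short density argument instead of the antichain-in-$\PP\times\PP$ construction you sketch, and its ``Claim 3'' is the back-and-forth transfer of small sets underlying the cover step), and it is formulated locally so as to go through under the weaker theory $T_1$.
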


We will discuss the proof of the result above, for two purposes.
The fact is central to our concerns, and the proof
contains elements which will come up in various places later,
so it is natural to collect all these things together.
Second, we wish to prove a version which assumes less background theory 
(than $\ZFC$). The authors of \cite{bhtu} make use of an analysis
of the complexity of the definability of grounds.
As shown there, each ground $W$ is, in particular, $\Sigma_2$ in a parameter 
$r$. However, the $\Sigma_2$ definition given there is not particularly
local: to compute $V_\alpha^W$, they work in $V_\beta$,
for a significantly larger ordinal $\beta$. So for  \cite[Theorem 
4]{bhtu}, they adopt the background theory  $\ZFC_\delta$.
We show here that the ground definability can be done much more locally
(though still requiring $\Sigma_2$ complexity), hence requiring significantly 
less than $\ZFC_\delta$.

\begin{dfn}\label{dfn:T_1}
Let $T_1^-$ 
 be the following theory in the language of set theory.
The axioms are Extensionality, Foundation, Pairing, Union,
Infinity, ``Every set is bijectable with an ordinal'',
$\Sigma_1$-Separation
and $\Sigma_1$-Collection. Now let
\[ T_1=T_1^-+\text{ Powerset}.\qedhere\]
\end{dfn}

Note that $T_1^-\sats\AC$. We will show that models of $T_1$ can uniformly 
define 
their grounds from parameters. First we give some lemmas.

\begin{lem}
 Assume $\ZFC$. Then for every cardinal $\kappa\geq\om$, 
(i) $\her_\kappa\sats T_1^-$, and
(ii) $\her_\kappa\sats T_1$ iff $\kappa$ is a strong limit cardinal.
\end{lem}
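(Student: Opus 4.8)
The plan is to verify the two halves directly from the definition of $T_1^-$ and $T_1$, using standard facts about the hereditarily-${<}\kappa$ sets $\her_\kappa$. First I would record the basic structural facts: for a cardinal $\kappa\geq\om$, $\her_\kappa$ is a transitive set closed under subsets, finite unions, and pairs, and $\OR\inter\her_\kappa=\kappa$; moreover $\her_\kappa$ satisfies $\AC$ (indeed ``every set is bijectable with an ordinal'') since $V\sats\ZFC$ and a wellordering of a set of size ${<}\kappa$ in $V$ lies in $\her_\kappa$. Extensionality and Foundation are inherited by any transitive set. Pairing, Union and Infinity hold because $\her_\kappa$ is closed under the corresponding operations (for Infinity, $\om\in\her_\kappa$ as $\kappa\geq\om$; strictly one wants $\kappa>\om$ for $\om$ to be an element rather than needing $\her_{\om}=V_\om$, which still works). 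So for part (i) the only real content is $\Sigma_1$-Separation and $\Sigma_1$-Collection.

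For $\Sigma_1$-Collection in $\her_\kappa$: given $a\in\her_\kappa$ and a $\Sigma_1$ formula $\varphi$ such that $\her_\kappa\sats\all x\in a\,\ex y\,\varphi(x,y)$, I would, working in $V$, use $\ZFC$-Collection to get a set $B$ such that $\all x\in a\,\ex y\in B\,(\her_\kappa\sats\varphi(x,y))$; then for each such $x$ pick a witness $y_x\in B\inter\her_\kappa$ and note $\{y_x\mid x\in a\}$ has size $\leq|a|<\kappa$ and each $y_x\in\her_\kappa$, so its transitive closure has size ${<}\kappa$, whence the collecting set lies in $\her_\kappa$. The point that makes this legitimate at the $\Sigma_1$ level (rather than needing full reflection) is that $\Sigma_1$ formulas are upward absolute between the transitive set $\her_\kappa$ and $V$, so a witness found in $V$ inside $\her_\kappa$ genuinely witnesses $\varphi$ in $\her_\kappa$; conversely we only need to know a witness \emph{exists} in $\her_\kappa$, which is the hypothesis. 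For $\Sigma_1$-Separation: given $a\in\her_\kappa$ and $\Sigma_1$ $\varphi$, the set $\{x\in a\mid \her_\kappa\sats\varphi(x)\}$ is a subset of $a$, hence of size ${<}\kappa$ and hereditarily so, hence in $\her_\kappa$. Actually full Separation holds in $\her_\kappa$ this way, so $\Sigma_1$-Separation is immediate.

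For part (ii): if $\kappa$ is a strong limit cardinal then $\her_\kappa$ is closed under powerset (if $x\in\her_\kappa$ then $|\trcl(x)|<\kappa$, so $|x|<\kappa$, so $|\pow(x)|<\kappa$ by strong limitness, and every subset of $x$ is again in $\her_\kappa$, so $\pow(x)^{\her_\kappa}=\pow(x)^V\in\her_\kappa$), giving Powerset and hence $T_1$. Conversely, if $\kappa$ is not a strong limit cardinal, fix $\lambda<\kappa$ with $2^\lambda\geq\kappa$; then $\lambda\in\her_\kappa$ but $\pow(\lambda)$ has size $\geq\kappa$, and since $\pow(\lambda)^{\her_\kappa}=\pow(\lambda)^V$ (every subset of $\lambda$ is hereditarily of size ${<}\kappa$), the true powerset of $\lambda$ is not an element of $\her_\kappa$, so Powerset fails. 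One caveat: one must check $\kappa$ regular is not needed here; indeed it is not, $\her_\kappa$ makes sense and the above goes through for any infinite cardinal $\kappa$, though of course if $\kappa$ is singular strong limit it is still a strong limit cardinal and $T_1$ holds.

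I do not expect a serious obstacle; the only point requiring a little care is $\Sigma_1$-Collection, where one must be careful that the instance of $\ZFC$-Collection in $V$ is applied to the relativized formula ``$\her_\kappa\sats\varphi(x,y)$'' (which is fine since $\her_\kappa$ is a definable-from-$\kappa$ set, so this is a legitimate formula of $V$), and that the resulting witness set, being a ${<}\kappa$-sized set of elements of $\her_\kappa$, is itself in $\her_\kappa$ — this uses that $\kappa$ is a cardinal so that a union of ${<}\kappa$ many sets each of hereditary size ${<}\kappa$ again has hereditary size ${<}\kappa$. A marginal subtlety is the edge case $\kappa=\om$: then $\her_\om=V_\om$, Infinity fails, so one should either restrict to $\kappa>\om$ or note that the lemma as stated with ``$\kappa\geq\om$'' is using the convention that makes Infinity hold; I would simply remark that for $\kappa=\om$ one interprets things so that $\om\in\her_\om$ is replaced by the observation that $\her_{\om_1}$ is the first interesting case, or adopt $\kappa>\om$ throughout — a one-line footnote suffices.
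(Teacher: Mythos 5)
Your argument for $\Sigma_1$-Collection has a gap when $\kappa$ is singular, and singular cardinals are in scope since strong limits such as $\beth_\om$ are singular. A red flag is that your proof never actually uses the $\Sigma_1$ restriction: applying Collection in $V$ to the relation ``$\her_\kappa\sats\varphi(x,y)$'' is legitimate for \emph{any} $\varphi$, so as written the argument would yield full Collection in $\her_\kappa$ --- but full Collection provably fails in $\her_\kappa$ for singular $\kappa$ (over $\her_{\aleph_\om}$ the cofinal function $n\mapsto\aleph_n$ is definable and its range lies in no set belonging to $\her_{\aleph_\om}$). The concrete failure point is the step you flag yourself: ``a union of ${<}\kappa$ many sets each of hereditary size ${<}\kappa$ again has hereditary size ${<}\kappa$'' requires $\kappa$ regular; with $\kappa=\aleph_\om$, the set $\{\om_n:n<\om\}$ has $\om$ members each of hereditary size ${<}\kappa$, yet its transitive closure has size $\kappa$. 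Since your $y_x$ are chosen from $B\inter\her_\kappa$ with no bound on $|\trcl(y_x)|$, nothing prevents $\{y_x : x\in a\}$ from falling outside $\her_\kappa$.

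The fix is a Skolem-hull argument producing witnesses of uniformly small hereditary size, and this is exactly where $\Sigma_1$-ness matters. Write $\varphi(x,y)=\exists z\,\psi(x,y,z,p)$ with $\psi$ being $\Sigma_0$ (hence absolute between transitive sets), $p$ the parameter. Set $\mu=|\trcl(\{a,p\})|+\om<\kappa$, take $\theta$ large regular and $H\elem\her_\theta$ with $\trcl(\{a,p\})\cup\{\kappa\}\sub H$ and $|H|=\mu$; let $\bar H$ be the transitive collapse of $H$, which fixes $a$, $p$ and every $x\in a$. For each $x\in a$, $H\sats\exists y,z\in\her_\kappa\ \psi(x,y,z,p)$ by elementarity, so collapsing gives $\bar y_x,\bar z_x\in\bar H$ with $\psi(x,\bar y_x,\bar z_x,p)$ true; as $\bar H$ is a transitive set of size $\mu<\kappa$, every $\bar y_x$ has $|\trcl(\bar y_x)|\leq\mu$, hence $\bar y_x\in\her_\kappa$ and $\her_\kappa\sats\varphi(x,\bar y_x,p)$, and $\{\bar y_x:x\in a\}$ has transitive closure of size $\leq\mu<\kappa$, so it is the required collecting set in $\her_\kappa$. (One further small correction: you write that $\Sigma_1$ upward absoluteness means ``a witness found in $V$ inside $\her_\kappa$ genuinely witnesses $\varphi$ in $\her_\kappa$''; that is the downward direction, which fails for $\Sigma_1$ in general --- though your argument did not actually rely on it.) Your treatment of the remaining $T_1^-$ axioms, of part (ii), and your caveat about $\kappa=\om$ and Infinity are all fine.
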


The usual proofs from $\ZFC$ easily adapt to give:
\begin{lem}
Assume $T_1$. Then  (i) for each ordinal $\xi$,
 $\her_\xi$ exists, (ii) $V=\bigcup_{\xi\in\OR}\her_\xi$,
  (iii) $\her_\xi\elem_1 V$, (iv) $\her_\xi\sats T_1^-$,
  (iv) the Lowenheim-Skolem theorem holds.
\end{lem}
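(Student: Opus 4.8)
The plan is to run the standard $\ZFC$ arguments, checking that each step uses only Powerset, the $\AC$ axiom of $T_1^-$, and the $\Sigma_1$-Separation and $\Sigma_1$-Collection schemes, and keeping track of the syntactic complexity of the definitions involved so that those schemes apply. Recall first that, already in $T_1^-$ and by the usual arguments: every set $x$ has a transitive closure (a $\Sigma_1$-recursion along $\om$), and ``$t=\trcl(\{x\})$'' is $\Sigma_1$; $\Sigma_1$-recursion along a wellfounded set-sized relation is available, so the Mostowski collapse of a wellfounded extensional set-sized relation exists; and wellfoundedness of a relation on a set is $\Sigma_1$ (rendered by the existence of a rank function). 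I will also use freely that each $\her_\xi$ is transitive, is closed under subsets-of-elements, and (since $\card(\xi)\geq\om$) is closed under pairing.

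For (i): given an ordinal $\xi$, let $\mathcal W_\xi$ be the class of transitive sets of cardinality $<\xi$. Each such set is the Mostowski collapse of some pair $(\gamma,E)$ with $\gamma<\xi$ and $E\sub\gamma\cross\gamma$ extensional and wellfounded, obtained by transferring $\in$ along a wellordering of the set (using the $\AC$ axiom). The class of such codes is a set by Powerset (each $E\in\pow(\xi\cross\xi)$) together with $\Sigma_1$-Separation, so $\mathcal W_\xi$ is a set by $\Sigma_1$-Collection and $\Sigma_1$-Separation, and hence $\her_\xi=\bigcup\mathcal W_\xi$ is a set. Part (ii) is then immediate: given $x$, the $\AC$ axiom gives a bijection of $\trcl(\{x\})$ with an ordinal $\gamma$, so $x\in\her_{\gamma+\om}$. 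And (v) — the downward Löwenheim--Skolem assertion for set structures — is the usual Skolem-hull construction: wellorder the structure by $\AC$, fix Skolem functions, and close the given subset under them, which is a $\Sigma_1$-recursion along $\om$ followed by a union.

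For (iii), take $\xi$ to be an uncountable cardinal (the general case reduces to this, and $\xi=\om$ is degenerate). Upward $\Sigma_1$-absoluteness between $\her_\xi$ and $V$ is automatic. For the converse, suppose $V\sats\ex y\,\psi(y,\bar a)$ with $\psi\in\Delta_0$ and $\bar a\in\her_\xi$, and fix a witness $y$. By (i) and (ii), choose $\lambda$ with $y,\bar a\in\her_\lambda$, so that $\her_\lambda\sats\ex y\,\psi(y,\bar a)$ by transitivity and $\Delta_0$-absoluteness. By (v) take $N\elem\her_\lambda$ with $\trcl(\{\bar a\})\sub N$ and $\card(N)<\xi$, and let $\pi\colon N\to\bar N$ be the transitive collapse. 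Then $\pi$ fixes $\bar a$ (its transitive closure is transitive and contained in $N$), while $\bar N$ is a transitive set of cardinality $<\xi$, so $\bar N\sub\her_\xi$; the collapse of a witness in $N$ witnesses $\psi(\cdot,\bar a)$ in the transitive set $\bar N$, hence in $V$, and it lies in $\her_\xi$. Thus $\her_\xi\sats\ex y\,\psi(y,\bar a)$.

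For (iv), check the axioms one at a time. Extensionality and Foundation hold in any transitive set, and Pairing, Union, Infinity ($\om\in\her_\xi$, as $\xi$ is uncountable) and the $\AC$ axiom (a bijection of $a\in\her_\xi$ with an ordinal $<\xi$ again lies in $\her_\xi$) are the closure properties noted above. For $\Sigma_1$-Separation and $\Sigma_1$-Collection, use (iii) to pass between $\Sigma_1$-truth in $\her_\xi$ and in $V$: $\Sigma_1$-Separation over $a\in\her_\xi$ reduces to $\Sigma_1$-Separation in $V$, since the resulting subset of $a$ lies in $\her_\xi$; and a $\Sigma_1$-Collection instance over $a\in\her_\xi$ reduces, via (iii), to $\Sigma_1$-Collection in $V$, which yields a bounding set $b$, from which one extracts a set of $\leq\card(a)$-many witnesses — the relation $\{(x,y): x\in a\text{ and }\varphi(x,y)\}$ being a set by $\Sigma_1$-Separation, choosing for each $x\in a$ the $\prec$-least witness $y$ (for a wellorder $\prec$ of $b$ obtained from $\AC$) is then a $\Delta_0$ matter. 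This bounds the transitive closures of the chosen witnesses below $\xi$ when $\xi$ is regular; for singular $\xi$, first pass to a regular cardinal $\xi'<\xi$ above all the relevant parameters, so that $\her_{\xi'}\elem_1\her_\xi$ by two applications of (iii), settle the instance there, and carry the bounding set back up. The one real point of care throughout is this complexity bookkeeping — keeping ``wellfounded'', ``$\trcl$'', ``Mostowski collapse'' and the witness-selection in Collection all $\Sigma_1$, or $\Delta_0$ relative to an already-obtained set, so that the weak schemes of $T_1$ genuinely apply — together with the harmless degeneracies around $\om$.
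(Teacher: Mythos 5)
The paper itself supplies no argument here — it says only that ``the usual proofs from $\ZFC$ easily adapt'' — so your write-up is the only proof under discussion, and it does indeed run the standard arguments while tracking the complexity of each step against the schemes available in $T_1$. The overall plan (build $\her_\xi$ from codes in $\pow(\xi\cross\xi)$ via $\Sigma_1$-Separation and $\Sigma_1$-Collection; get (ii) from the $\AC$ axiom; deduce (iii) by a Skolem-hull-and-collapse argument from (v); deduce (iv) from (iii)) is correct and is presumably what the author has in mind.

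One spot needs to be stated more carefully: your verification of $\Sigma_1$-Collection in $\her_\xi$. You say the instance ``reduces, via (iii), to $\Sigma_1$-Collection in $V$, which yields a bounding set $b$,'' and then that choosing $\prec$-least witnesses from $b$ ``bounds the transitive closures of the chosen witnesses below $\xi$.'' As written this last claim is unjustified: the bound $b$ produced by $V$-Collection for the bare formula $\varphi$ may consist entirely of witnesses outside $\her_\xi$, in which case their transitive closures need not have size ${<\xi}$. The fix is routine but should be said: since $\her_\xi$ is by now a set (so ``$y\in\her_\xi$'' is $\Delta_0$ in the parameter $\her_\xi$), apply $V$-Collection to the $\Sigma_1$ formula $\varphi(x,y,\bar p)\wedge y\in\her_\xi$; the resulting bound can then be intersected with $\her_\xi$, and only after that does the regularity-of-$\xi$ counting argument for the transitive closures go through. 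With that amendment the rest of the Collection argument, including the reduction of singular $\xi$ to regular $\xi'<\xi$ via two applications of (iii), is fine. The remaining small imprecisions you already flag yourself (the $\xi=\om$ degeneracy in (iii)--(iv), which is harmless for the paper's uses) and are acceptable.
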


In the following lemma, the forcing relations $\forces_{\Sigma_i}$
for $i\in\{0,1\}$, and 
$\forces_{\Pi_1}$, are the relations defined in a first-order
manner over $M$ in the usual 
manner, and the strong-$\Sigma_{i+1}$-forcing relation
$\forces^*_{\Sigma_{i+1}}$ is the relation for which,
given a $\Pi_i$ formula $\psi(\vec{x},\vec{y})$
with free variables $\vec{x},\vec{y}$, and 
given $\vec{\tau}\in(M^\PP)^{<\om}$,
we say $p\forces^*_{\Sigma_{i+1}}\exists\vec{y}\ \psi(\vec{\tau},\vec{y})$
iff there is $\vec{\sigma}\in(M^\PP)^{<\om}$
such that $p\forces_{\Pi_i}\psi(\vec{\tau},\vec{\sigma})$.

\begin{lem}[Forcing over $T_1^-$ and $T_1$]\label{lem:T_1_forcing}
Let $M\sats T_1^-$. Let $\PP\in M$ be a poset with $\PP\sub\gamma\in\OR^M$
and $G$ be $(M,\PP)$-generic.
Then:
\begin{enumerate}
 \item\label{item:Sigma_0_fr} We have:
 \begin{enumerate}
 \item The $\Sigma_0$-forcing relation 
$\forces_{\Sigma_0}$ for $(M,\PP)$ is 
$\Delta_1^M(\{\PP\})$, uniformly.
\item The $\Sigma_1$-forcing relation $\forces_{\Sigma_1}$ for 
$(M,\PP)$ is 
$\Sigma_1^M(\{\PP\})$, uniformly.\footnote{In a previous
draft of this document,
it mistakenly said that the $\Sigma_1$-forcing relation
is $\Delta_1^M$-definable, which is clearly false,
since in the case of trivial forcing, it would imply
that $\Sigma_1^M=\Delta_1^M$.}
\item The $\Pi_1$-forcing relation $\forces_{\Pi_1}$ for $(M,\PP)$ is 
$\Pi_1^M(\{\PP\})$, uniformly.
\end{enumerate}
Hence, $\forces_{\Sigma_0}$, $\forces_{\Sigma_1}$ and $\forces_{\Pi_1}$ are
 absolute to $\her_{\kappa}^M$, for $M$-cardinals $\kappa>\gamma$.
\item The strong-$\Sigma_2$-forcing relation $\forces^*_{\Sigma_2}$
for $(M,\PP)$ is $\Sigma_2^M(\{\PP\})$, uniformly.
 \item\label{item:Sigma_1_ft} The forcing theorem for 
$\Sigma_0$, $\Sigma_1$, $\Pi_1$ formulas
 holds for  $M[G]$, with respect to $\forces_{\Sigma_0}$, $\forces_{\Sigma_1}$,
 $\forces_{\Pi_1}$; likewise for $\Sigma_2$ and $\forces^*_{\Sigma_2}$.
That is, if $\varphi$ is $\Sigma_i$, where $i\in\{0,1\}$, and 
$\vec{\tau}\in(M^\PP)^{<\om}$,
then
\[ M[G]\sats\varphi(\vec{\tau}_G)\iff \exists p\in G\ \Big[
M\sats\text{``}p\forces_{\Sigma_i}\varphi(\vec{\tau})\text{''}\Big].\]
Likewise for $\Pi_1$ with $\forces_{\Pi_1}$, and for $\Sigma_2$ with 
$\forces^*_{\Sigma_2}$.

 \item\label{item:M[G]_sats_T_1} $M[G]\sats T_1^-$,
 and if $M\sats T_1$ then $M[G]\sats T_1$.
 \item $M$ and $M[G]$ have the same
 cardinals $\kappa>\gamma$,
 \item for each $M$-cardinal $\kappa>\gamma$,
we have $\her_\kappa^{M[G]}=\her_\kappa^M[G]$.
\end{enumerate}
\end{lem}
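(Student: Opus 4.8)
The plan is to set up the basic theory of $\PP$-forcing over models of $T_1^-$, running the standard $\ZFC$-development while tracking definitional complexity; the one structural feature we exploit is that $\PP\sub\gamma$ is a \emph{set}, so that all the relevant names and conditions are organised below a fixed set. First one checks, using Foundation together with $\Sigma_1$-Collection, that in any $M\sats T_1^-$ the transitive-closure operation and the von Neumann rank function are total and $\Sigma_1^M$; consequently the class $M^\PP$ of $\PP$-names over $M$ is a definable class carrying an $\OR^M$-valued rank.

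The core is part \ref{item:Sigma_0_fr}(a). Define $\forces_{\Sigma_0}$ by the usual recursion: on the pair of name-ranks for the atomic clauses ``$\tau_1\in\tau_2$'' and ``$\tau_1=\tau_2$'' (each phrased via density), and on subformula structure for $\neg$, $\wedge$ and bounded $\exists x\in\tau$. One verifies that this recursion is executable inside $M$: for a formula $\varphi$ and name-tuple $\vec\tau$ the associated \emph{forcing computation} --- the restriction of the relation to all subformulas of $\varphi$, all subnames of the entries of $\vec\tau$, and all conditions of $\PP$ --- is a set whose existence is proved by $\in$-recursion, with $\Sigma_1$-Collection used to gather the subcomputations, and whose uniqueness is proved by $\in$-induction (i.e.\ Foundation). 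Hence ``$p\forces_{\Sigma_0}\varphi(\vec\tau)$'' admits a $\Sigma_1^M(\{\PP\})$ definition (``some forcing computation witnesses it'') and a $\Pi_1^M(\{\PP\})$ one (``every forcing computation witnesses it''), so is $\Delta_1^M(\{\PP\})$, uniformly. The relations $\forces_{\Sigma_1}$ and $\forces_{\Pi_1}$ are then layered on top in the usual way --- $p\forces_{\Sigma_1}\exists x\,\varphi(x,\vec\tau)$ iff $\{q\leq p:\ex\sigma\ q\forces_{\Sigma_0}\varphi(\sigma,\vec\tau)\}$ is dense below $p$, and dually for $\forces_{\Pi_1}$ --- and here one uses that, granted $\Sigma_1$-Collection, $\Sigma_1$ formulas are closed under bounded quantification, so that ``dense below $p$'' applied to a $\Sigma_1$ condition stays $\Sigma_1$; this gives parts \ref{item:Sigma_0_fr}(b),(c). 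Part 2 is then immediate from the definition of $\forces^*_{\Sigma_2}$ and part \ref{item:Sigma_0_fr}(c). Absoluteness to $\her_\kappa^M$ (for $M$-cardinals $\kappa>\gamma$, noting $\PP\in\her_\kappa^M$) is read off the complexity: a forcing computation for $\Sigma_0$-parameters drawn from $\her_\kappa^M$ is itself hereditarily of size $<\kappa$, so the $\Delta_1$ relation $\forces_{\Sigma_0}$ is genuinely absolute, and the $\Sigma_1$/$\Pi_1$ relations inherit the corresponding one-directional absoluteness (full absoluteness when, as in the applications, $M\sats T_1$ and hence $\her_\kappa^M\elem_1 M$).

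Part \ref{item:Sigma_1_ft} (the forcing theorem) is proved by induction on formulas, exactly as over $\ZFC$: the $\Sigma_0$ case combines the recursive definition of $\forces_{\Sigma_0}$ with the genericity of $G$ (density arguments at the atomic and bounded-$\exists$ clauses); the $\Sigma_1$ case reduces $M[G]\sats\ex x\,\varphi(x,\vec\tau_G)$ to the existence of a name $\sigma$ with $M[G]\sats\varphi(\sigma_G,\vec\tau_G)$ and then applies the $\Sigma_0$ case together with genericity (the relevant dense-below-$p$ set lies in $M$ by $\Sigma_1$-Separation); $\Pi_1$ is dual, using that $\{q:q$ decides $\forall x\,\varphi\}$ is dense; and $\Sigma_2$ with $\forces^*_{\Sigma_2}$ follows by commuting an existential name-quantifier past ``$\ex p\in G$'' on top of the $\Pi_1$ case. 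For part \ref{item:M[G]_sats_T_1} one verifies the axioms of $T_1^-$ in $M[G]$: Extensionality, Foundation, Pairing, Union and Infinity via canonical names; ``every set is bijectable with an ordinal'' because a name $\tau$ induces in $M[G]$ a surjection from a wellordered set onto $\tau_G$ (using $\AC$ in $M$); and $\Sigma_1$-Separation and $\Sigma_1$-Collection in $M[G]$ from the forcing theorem for $\Sigma_0,\Sigma_1$ formulas plus the same schemes in $M$ --- for Separation, take the evident name, a $\Sigma_1^M$-definable subset of $\tau\cross\PP$; for Collection, apply $\Sigma_1$-Collection in $M$ over pairs (subname of $\tau$, condition), using the maximal principle (available as $M\sats\AC$ with enough Collection) to turn forced existentials into witnessing names. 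If moreover $M\sats T_1$, then $M[G]\sats$ Powerset by the nice-name argument (a subset of $\tau_G$ is named by a subset of $(\text{subnames of }\tau)\cross\PP$, and there are only set-many such, using Powerset in $M$), so $M[G]\sats T_1$.

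Finally, parts 5 and 6. Every antichain of $\PP$ is a subset of $\gamma$, so $\PP$ is trivially $\gamma^+$-cc; the standard argument --- bound the range of a name for a function with small domain by a union of size-${\leq}\gamma$ many ``sets of possible values'', each an antichain's worth --- needs only $\Sigma_1$-Separation/Collection and $\AC$ in $M$ and yields that $M$ and $M[G]$ have the same cardinals $>\gamma$. For part 6, $\supseteq$ is immediate: if $\dot a\in\her_\kappa^M$ then in $M[G]$ there is a surjection from $\trcl(\{\dot a\})$ onto $\trcl(\{\dot a_G\})$, so $\dot a_G\in\her_\kappa^{M[G]}$. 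For $\subseteq$, given $a\in\her_\kappa^{M[G]}$, fix in $M[G]$ a bijection $e:\theta\to\trcl(\{a\})$ with $\theta<\kappa$ and code membership as a set $E\sub\theta\cross\theta$; a nice name for $E$ has hereditary size ${\leq}\theta\cdot\card(\PP)<\kappa$, hence lies in $\her_\kappa^M$, and from it one builds by recursion along $E$ a name for $a$ in $\her_\kappa^M$. The main obstacle I expect is part \ref{item:Sigma_0_fr}(a): making precise that the recursion defining $\forces_{\Sigma_0}$ is provably total and provably unique in $T_1^-$ --- i.e.\ exactly which instances of $\Sigma_1$-Separation, $\Sigma_1$-Collection and Foundation are invoked clause by clause --- after which everything else is routine bookkeeping or a standard forcing argument.
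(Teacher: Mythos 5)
Your proposal is correct and follows essentially the same route as the paper: establish the $\Delta_1$-definability of $\forces_{\Sigma_0}$ via set-sized forcing computations executable under $\Sigma_1$-Collection and Foundation, absorb the bounded quantifier in the density definition of $\forces_{\Sigma_1}$ using $\Sigma_1$-Collection (the paper instead phrases this as proving $\forces_{\Sigma_1}=\forces^*_{\Sigma_1}$ via mixing, which is the same content), and then run the standard inductions for the forcing theorem, preservation of $T_1^-/T_1$, cardinal preservation, and computation of $\her_\kappa$. Your hedge about full versus one-directional absoluteness of $\forces_{\Sigma_1},\forces_{\Pi_1}$ over $T_1^-$ alone, and your use of the maximum principle in the $\Sigma_1$-Collection verification for $M[G]$, are minor tactical differences that do not change the substance.
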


Such
local forcing calculations are very common in the literature,
in particular in fine structure theory, where much more local calculations are 
often used. But we include a proof in case the reader has
not seen these before.
\begin{proof}
Parts \ref{item:Sigma_0_fr}, \ref{item:Sigma_1_ft} for $\Sigma_0$:
The usual internal 
definition of the $\Sigma_0$-forcing relation $\forces_0$
works locally; in fact,
for each $\xi\in\OR^M$ with $\xi\geq\gamma$,
the $\Sigma_0$-forcing
relation for names in $\her_\xi$, is $\Delta_1^{\her_\xi}(\{\PP\})$,
uniformly in $\xi$. This gives the Forcing Theorem
for $\Sigma_0$ formulas in the usual manner.

Parts  \ref{item:Sigma_0_fr}, \ref{item:Sigma_1_ft} for $\Sigma_1$:
We defined the strong-$\Sigma_1$-forcing relation $\forces^*_1$ over $M$ 
above. Using the $\Sigma_0$-Forcing Theorem, note that 
$M[G]\sats\exists y\ \varphi(y,\tau_G)$
iff there is $p\in G$ such that 
$M\sats$``$p\forces^*_1\exists y\ \varphi(y,\tau)$''.
Moreover, $\forces^*_1$ is uniformly $\Sigma_1^M(\{\PP\})$-definable.

Note that we take $\forces_1$ defined over $M$ as follows:
Working in 
$M$, for $\varphi$ being $\Sigma_1$
and $\tau\in M^\PP$, set
\[ p\forces_1\varphi(\tau) \iff \all q\leq p\ \exists r\leq q\ 
\Big[r\forces^*_1\varphi(\tau)\Big].\]

We claim that $p\forces_1\varphi(\tau)$
iff $p\forces_1^*\varphi(\tau)$.
For the non-trivial direction,
suppose $p\forces_1\varphi(\tau)$.
Then working in $M$, using
$\Sigma_1$-Collection and $\AC$,
we can put together a name $\sigma\in M^\PP$
showing that $p\forces^*_1\varphi(\tau)$.
This completes the calculation for $\Sigma_1$.

Parts  \ref{item:Sigma_0_fr}, \ref{item:Sigma_1_ft} for $\Pi_1$:
$\forces_{\Pi_1}$ is defined as usual: Working in $M$,
for $\varphi$ being $\Pi_1$ and $\tau\in M^\PP$,
say $p\forces_{\Pi_1}\varphi(\tau)$
iff there is no $q\leq p$ such that 
$q\forces_{\Sigma_1}\neg\varphi(\tau)$.
So $\forces_{\Pi_1}$ is $\Pi_1^M(\{\PP\})$.
If $p\in G$ and $p\forces_{\Pi_1}\varphi(\tau)$,
then clearly $M[G]\sats\varphi(\tau_G)$.
So suppose $M[G]\sats\varphi(\tau_G)$ where $\varphi$ is $\Pi_1$.
Let
\[ D=\{p\in\PP\bigm|p\forces_{\Sigma_1}\neg\varphi(\tau)\}.\]
By $\Sigma_1$-Separation, $D\in M$.
Let $D'=D\cup\{p\in\PP\bigm|\neg\exists q\in D\ [q\leq p]\}$,
then $D'\in M$, and since $D'$ is dense, this easily suffices.

Parts  \ref{item:Sigma_0_fr}, \ref{item:Sigma_1_ft} for $\Sigma_2$:
Here we only consider the strong-$\Sigma_2$ forcing relation
$\forces^*_{\Sigma_2}$, and the claims regarding this follow immediately
just like for $\forces^*_{\Sigma_1}$.

Part \ref{item:M[G]_sats_T_1}: Most of the axioms are routine consequences
of the previous parts.
Let us verify that $M[G]\sats\Sigma_1$-Collection.
Fix a $\Sigma_0$ formula $\varphi$ and $\sigma,\tau\in M^\PP$.
Let $t\in M$ be the transitive closure of $\{\sigma,\tau\}$.
Then there is $w\in M$ such that for all $p\in\PP$
and $\varrho\in t$,
if
\[ p\forces_{\Sigma_1}\text{``}\varrho\in\sigma\text{ and }\exists 
y\varphi(\varrho,\tau,y)\text{''},\]
then there is $y\in M^\PP\inter w$ such that
$p\forces_{\Sigma_0}\text{``}\varrho\in\sigma\text{ and 
}\varphi(\varrho,\tau,y)\text{''}$.
But then using $w$, we easily get a bound on witnesses in $M[G]$,
as desired. This and the $\Sigma_0$-Forcing Theorem
easily yields $\Sigma_1$-Separation.

The remaining parts follow from routine calculations with nice names.
\end{proof}

\begin{dfn}
 Let $(M,E)\sats T_1^-$. A \emph{ground} of $M$ is a
 $W\sub M$ such that:
 \begin{enumerate}
  \item $(W,E\rest W)$ is \emph{$M$-transitive};
 that is, for all $x\in W$ and all $y\in M$, if $yEx$ then $y\in W$,
  \item $W\sats T_1^-$,
  \item there is $\PP\in W$ and a $(W,\PP)$-generic
 $G\in M$ such that $M=W[G]$.
 \item If $(M,E)\sats T_1$ then $(W,E\rest W)\sats T_1$.\qedhere
 \end{enumerate}
\end{dfn}

We now prove that $T_1$ suffices for the definability of grounds
(in the sense of the definition above). 
The proof is essentially that due to some combination of Laver, Woodin and 
Hamkins.
In the proof we make implicit use of 
Lemma \ref{lem:T_1_forcing}, to allow the forcing calculations:

\begin{tm}[Ground definability under $T_1$]
Assume $T_1$.  Let 
$\gamma\in\OR$,  
$H\sub\her_{\gamma^+}$ and $\kappa\geq\gamma^+$ a cardinal.
Then there is at most
one transitive $M\sub\her_{\kappa}$ such that
$M\sats T_1^-$, $(\her_{\gamma^+})^M=H$, and
$M$ is a ground for $\her_{\kappa}$ via some  $\PP\in H$.
\end{tm}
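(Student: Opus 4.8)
The goal is to show the ground $M\subseteq\her_\kappa$ is uniquely determined by the data $(H,\kappa)$, i.e. by $H=(\her_{\gamma^+})^M$ together with the ambient structure $\her_\kappa$. The strategy is the classical Laver--Woodin--Hamkins argument, carried out locally inside $\her_\kappa$ with the forcing calculations of Lemma \ref{lem:T_1_forcing} in place of full $\ZFC$ forcing. So suppose $M,N\subseteq\her_\kappa$ are both transitive models of $T_1^-$ (indeed $T_1$, since $\her_\kappa\sats T_1$), both grounds of $\her_\kappa$ via forcings $\PP\in H=(\her_{\gamma^+})^M$ and $\QQ\in H=(\her_{\gamma^+})^N$ respectively, with generics $G,g\in\her_\kappa$ and $M[G]=N[g]=\her_\kappa$. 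We may assume $\PP,\QQ\subseteq\gamma$ (reorganizing so the posets have underlying set an ordinal $\leq\gamma$, which is harmless since the relevant $\her_{\gamma^+}$'s agree). I would fix a cardinal $\delta>\gamma$ (in $\her_\kappa$) large enough that $|\pow(\gamma)|\leq\delta$ everywhere relevant; the key point is that $\pow(\gamma)^M=\pow(\gamma)^N$ — this follows because $\pow(\gamma)^M\subseteq(\her_{\gamma^+})^M=H=(\her_{\gamma^+})^N\supseteq\pow(\gamma)^N$, and conversely, using that subsets of $\gamma$ in $M$ are exactly those in $H$.

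The heart of the matter is the \emph{covering / intermediate-model} step. Work in $\her_\kappa=M[G]$. For each $A\subseteq\gamma$ with $A\in\her_\kappa$, one shows $A\in M$ iff $A$ is ``captured'' by a certain uniformly-definable object. Concretely: in $M$, using that $\PP\subseteq\gamma$ has size $<\gamma^+$, form the Boolean completion and note every $\PP$-name for a subset of $\check\gamma$ is (equivalent to) one coded in $\her_{\gamma^+}^M=H$. Then, in $\her_\kappa$, a set $A\subseteq\gamma$ lies in $M$ iff there is a name $\tau\in H$ and $p\in G$ with $p\forces_{\Sigma_0}$``$\tau=\check A\cap\check\gamma$'' (reading $\check A$ off from $A$), and the relevant forcing relation is $\Delta_1^H$ by Lemma \ref{lem:T_1_forcing}(\ref{item:Sigma_0_fr}) — hence absolute between $M$ and $\her_\kappa$. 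This pins down $\pow(\gamma)^M$ from $(H,G)$. But $G$ itself is not part of the data, so the next move is the standard trick: since $\PP\subseteq\gamma$, $G$ is coded by a subset of $\gamma$, and one argues that $\pow(\gamma)^M$ is actually independent of the choice of $(\PP,G)$ witnessing that $M$ is a ground — because any two such give mutually generic-like configurations over the fixed $\pow(\gamma)^M$, so the set $\pow(\gamma)^M$ is definable purely from $H$ and $\her_\kappa$ (it is $\{A\subseteq\gamma: \exists\tau\in H\,\exists p\, [p\in G$ for \emph{some} witnessing generic$]\}$, and one checks well-definedness via a mutual-genericity argument, exactly as in Hamkins' proof of ``the ground is definable'').

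Once $\pow(\gamma)^M=\pow(\gamma)^N$ is established and shown to depend only on $(H,\her_\kappa)$, the final step is upward propagation: I claim $M=N$. One shows $V_\xi^M=V_\xi^N$, or better $\her_\xi^M=\her_\xi^N$, by induction on $M$-cardinals $\xi\geq\gamma^+$. The engine is: every set in $M$ is (in $M$) bijectable with an ordinal and hence coded by a subset of some $\her_\xi^M$; and by the approximation argument applied at each level — using that $\her_\kappa=M[G]=N[g]$ is a $<\gamma^+$-forcing extension of both, so $M$ and $N$ satisfy the $\gamma^+$-approximation and cover properties over $\her_\kappa$ — each such subset is determined by its intersections with sets in the common smaller level, together with the fixed $\pow(\gamma)$-data. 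By Lemma \ref{lem:T_1_forcing}, $\her_\xi^M[G]=\her_\xi^{M[G]}=\her_\xi^{\her_\kappa}=\her_\xi^{N[g]}=\her_\xi^N[g]$ for all $M$-cardinals $\xi>\gamma$ (and $M,N$ have the same such cardinals, namely those of $\her_\kappa$), so it suffices to recover $\her_\xi^M$ inside $\her_\xi^M[G]$ from $H$; that is exactly the $\gamma^+$-approximation argument again, one level up. Taking unions over $\xi<\kappa$ gives $M=\bigcup_\xi\her_\xi^M=\bigcup_\xi\her_\xi^N=N$.

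\textbf{The main obstacle.} The delicate point is the well-definedness of the recovered $\pow(\gamma)^M$ from $(H,\her_\kappa)$ alone — i.e. showing the answer does not depend on which witnessing $(\PP,G)$ one uses, and then that it does not depend on whether one started from $M$ or from $N$. This is where the mutual-genericity / intermediate-model lemma is really needed, and it is the part where one must be careful that the weak background theory $T_1$ (rather than $\ZFC$) still supports the needed forcing facts — but that is precisely what Lemma \ref{lem:T_1_forcing} was set up to provide, so the argument goes through with only bookkeeping changes from the classical proof. The induction in the last step is then routine given the base case.
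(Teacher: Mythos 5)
You have the right overall shape (a local Laver--Woodin--Hamkins argument run inside $\her_\kappa$, with Lemma \ref{lem:T_1_forcing} replacing full $\ZFC$ forcing), but the proposal misallocates effort and never supplies the genuinely delicate step. The paragraph devoted to establishing $\pow(\gamma)^M=\pow(\gamma)^N$ and to ``recovering'' it from $(H,\her_\kappa)$ via names and mutual genericity is not the crux: $\pow(\gamma)^M\sub(\her_{\gamma^+})^M=H=(\her_{\gamma^+})^N\supseteq\pow(\gamma)^N$ is immediate from the hypothesis, with no forcing and no well-definedness issue. All the work is at cardinals $\theta$ with $\gamma^+\leq\theta<\kappa$, where neither $\pow(\theta)^M$ nor $\pow(\theta)^N$ is given, and your proposal never actually argues at that level.

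The real gap is in the upward induction. You invoke ``the $\gamma^+$-approximation and cover properties over $\her_\kappa$'' as a black box, but these are precisely what must be \emph{proved} under the weak theory, and — more to the point — the approximation property alone cannot separate $M$ from $N$. The property says $A\sub\theta$ lies in $M$ iff $A\inter X\in M$ for every $X\in\pow(\theta)\inter M$ of size $<\gamma^+$; this is a criterion internal to a single candidate ground, and the analogous one holds for $N$. To get $\pow(\theta)\inter M=\pow(\theta)\inter N$ you also need that $M$ and $N$ have \emph{exactly the same} subsets of $\theta$ of size $<\gamma^+$. That is the paper's Claim 3, and it is proved by a genuine back-and-forth: one constructs in $V$ a continuous increasing $\gamma^+$-chain $\left<X_\alpha\right>_{\alpha<\gamma^+}$ of small subsets of $\theta$, alternating so that odd stages land in $M$ and even ones in $N$; uses the covering-type Claim 1 (itself a $T_1^-$ forcing argument) to find cofinal $D_M\in M$ and $D_N\in N$ of indices the models can see; intersects their clubs of limit points to get a common $X_\alpha\in M\inter N$; and collapses by its increasing enumeration $\pi\in M\inter N$, transferring $\pi^{-1}(X)$ through $\her_{\gamma^+}^M=H=\her_{\gamma^+}^N$. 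Nothing in your sketch plays this role — this is not a mutual-genericity argument — and without it the ``determined by its intersections'' step doesn't close. You also assert but do not prove that the approximation property itself survives the drop to $T_1^-$ (the paper's Claim 2 produces the approximating set $X$ of size $\leq\gamma$ by an explicit forcing computation, with $\Sigma_1$-Separation supplying the required set of conditions); given that the whole point of the theorem is to work below $\ZFC$, this needs to be spelled out. Finally, the paper frames the argument as an induction on $\kappa$ with a trivial base, a union step at limits, and the approximation/back-and-forth machinery applied only at successors $\kappa=\theta^+$; organizing it that way would make your propagation step precise.
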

\begin{proof}
We proceed by induction on $\kappa$.
For $\kappa=\gamma^+$ it is trivial.

Suppose $\kappa$ is a limit cardinal,
and that for each cardinal $\theta\in[\gamma^+,\kappa)$,
there is a (unique) model $M_\theta$ of ordinal height $\theta$
with the stated 
properties. Then clearly $M=\bigcup_{\theta<\kappa}M_\theta$
is the unique candidate at $\kappa$. To see that $M$ works,
we just need to verify that $M$ is indeed a set-ground
of $\her_\kappa$ via some $\PP\in H$;
i.e. there is $\PP\in H$ and an $(M,\PP)$-generic
$G\sub\PP$ such that $M[G]=\her_\kappa$. But we can use any $(\PP,G)$
which worked at some earlier $\theta$.
For let $\theta_0\leq\theta_1<\kappa$, and let 
$(\PP_0,G_0),(\PP_1,G_1)$ work for $M_0=M_{\theta_0}$ and $M_1=M_{\theta_1}$.
So
$G_0$ is also $(M_1,\PP_0)$-generic, and vice versa.
And since
$\her_{\gamma^+}^{M_0}=H=\her_{\gamma^+}^{M_1}$,
and $H[G_0]=\her_{\gamma^+}=H[G_1]$, it follows
that $\her_\kappa=M_0[G_0]=M_0[G_1]$ and $M_1[G_0]=M_1[G_1]=\her_\kappa$,
so the specific choice of $(\PP,G)$ is irrelevant.

So consider $\kappa=\theta^+>\gamma^+$. Let $M,N$
be grounds of $\her_\kappa$ with the stated properties. By induction,
$M\inter\her_\theta=N\inter\her_\theta$. It just
remains to verify that
$\pow(\theta)\inter M=\pow(\theta)\inter N$.
The proof is, however, not by contradiction;
we will not assume that $M\neq N$.
Fix $(\PP,G)$ such that $\PP\in H$ and $G$ is $(M,\PP)$-generic
and $M[G]=\her_\kappa$.

Suppose first that $\cof(\theta)>\gamma$, as this case is easier;
however, it is in the end subsumed into the general case.
Let $A\sub\theta$. Then:

\begin{clm} $A\in M$ iff $A\inter\alpha\in M$ for all $\alpha<\theta$.\end{clm}
\begin{proof}
For the non-trivial direction,
suppose $A\inter\alpha\in M$ for every $\alpha<\theta$.
Let $f:\theta\to M$ be 
$f(\alpha)=A\inter\alpha$.
Then $f\in\her_\kappa$. So
there is a $\PP$-name $\dot{f}\in M$ with $\dot{f}_G=f$.
Working in $M$, for $p\in\PP$, compute
\[ D_p=\{\alpha<\theta\bigm|\exists x\ 
[p\forces\dot{f}(\check{\alpha})=\check{x}]\},\]
and let
$f_p:D_p\to\theta$ be the function
\[ f_p(\alpha)=\text{ unique }x\text{ such that 
}p\forces\dot{f}(\check{\alpha})=\check{x}.\]
So $\left<D_p,f_p\right>_{p\in\PP}\in M$, and because $\cof(\theta)>\gamma$,
there is $p\in G$ such that $D_p$ is cofinal in $\theta$.
Then $f=\left(\bigcup_{\alpha\in D_p}f_p(\alpha)\right)\in M$.
\end{proof}

We now argue in general.

\begin{clm}
 Let $A\sub\theta$. Then $A\in M$ iff
 for every $X\in\pow(\theta)\inter M$
 such that $\card(X)<(\gamma^+)=(\gamma^+)^M$,
we have $A\inter X\in M$.
\end{clm}
\begin{proof}
The forward direction is trivial. So let $A\sub\theta$ with $A\notin M$.
Let $\dot{A}\in M$ be a $\PP$-name
and $p_0\in G$ such that 
$p_0\forces\dot{A}\sub\check{\theta}$.
For each $q\leq p_0$, \emph{if}
there is $\alpha<\theta$ such that
\[ q\not\forces\check{\alpha}\in\dot{A}\text{ and 
}q\not\forces\check{\alpha}\notin\dot{A},\]
then let $\alpha_q$ be the least such $\alpha$;
otherwise $\alpha_q$ is undefined.
Let $D$ be the set of all $q\leq p_0$
such that $\alpha_q$ exists.
Then $G\sub D$, because
otherwise $q$ decides all elements of $\dot{A}$,
so $A\in M$.

In $M$, let
 $X=\{\alpha_q\bigm|q\in D\}$.
Then  $X\in M$, $\card^M(X)\leq\gamma$ and
$X\inter A\notin M$, as desired. For given $Y\in\pow(X)\inter M$,
an easy density argument shows that $Y\neq X\inter A$.
\end{proof}

\begin{clm}Let $X\sub\theta$ with 
$\card(X)<\gamma^+$.
Then $X\in M$ iff $X\in N$.
\end{clm}
\begin{proof}
Suppose $X_0=X\in N$.
Let $\dot{X}\in M$ be a $\PP$-name for $X$.
Using the forcing relation and $\dot{X}$,
there is a set $X_1\in\pow(\theta)\inter M$
with $X_0\sub X_1$ and $\card(X_1)<(\gamma^+)^V$.
Proceeding back-and-forth, construct (in $V$)
a continuous sequence of sets $\left<X_\alpha\right>_{\alpha<\gamma^+}$ such 
that
(i) $X_0=X$,
(ii) $X_{\om\alpha+2n+1}\in M$ and $X_{\om\alpha+2n+2}\in N$, and
(iii) $\card(X_\alpha)<(\gamma^+)^V$.

Now $\gamma^+<\kappa$, so 
$\left<X_\alpha\right>_{\alpha<\gamma^+}\in\her_\kappa$,
so $M,N$ have names for this sequence.
So as in the $\cof(\theta)>\gamma$ case,
we get a cofinal set $D_M\sub\gamma^+$
such that $D_M\in M$ and $\left<X_\alpha\right>_{\alpha\in D_M}\in M$.
Likewise with a cofinal set $D_N\in N$.
Let $D'_M$ be the set of limit points of $D_M$,
and $D'_N$ likewise. So these are club in $\gamma^+$.
Let $\alpha\in D'_M\inter D'_N$. Then note
that
\[ X_\alpha=\left(\bigcup_{\beta\in 
D_M\inter\alpha}X_\beta\right)=
\left(\bigcup_{\beta\in D_N\inter\alpha}X_\beta\right)\in 
M\inter N.\]
Let $\pi:\xi\to X_\alpha$ be the increasing enumeration
of $X_\alpha$. Then $\xi<\gamma^+$ 
and  $\pi\in M\inter N$.
We have $X\sub\rg(\pi)$.
Let $\bar{X}=\pi^{-1}(X)$.
Then $\bar{X}\in N$. But $\her_{\gamma^+}^M=H=\her_{\gamma^+}^N$,
so $\bar{X}\in M$. So $\pi``\bar{X}=X\in M$, as desired.
\end{proof}

This completes the proof of  ground definability under $T_1$.
\end{proof}

\begin{rem}
If $M\sats T_1^-$+``there is a largest cardinal
 $\kappa$, and $\kappa$ is regular'', then grounds of
 $M$ via forcings $\PP$ of $M$-cardinality ${<\kappa}$
are also definable from parameters over $M$,
by arguing much as above.
\end{rem}

\begin{dfn}Assume $T_1$.
 Let $\varphi_{\mathrm{grd}}(r,x)$ be the formula ``
 there are $\gamma$, $\PP$, $G$, $M$, $\kappa$ such that
 $\gamma<\kappa$ are cardinals,
$M\sub\her_\kappa$ is transitive, $M\sats T_1^-$,
$\PP\in r=(\her_{\gamma^+})^M$,
$G$ is $(M,\PP)$-generic, $\her_\kappa=M[G]$ and $x\in M$''.
 
We write $W'_r=\{x\bigm|\varphi_{\mathrm{grd}}(r,x)\}$.
We say $r$ is a \emph{true index} iff $W'_r$ is proper class.
We write $W_r=W'_r$ for true indices $r$,
and $W_r=V$ otherwise.
\end{dfn}

\begin{cor}
 Assume $\ZFC+\GCH$ and let $\lambda$ be a limit cardinal.
 Then the grounds of $\her_\lambda$ are definable from parameters
 over $\her_\lambda$.
\end{cor}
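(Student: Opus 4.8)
The plan is to notice that $\her_\lambda$ is a model of $T_1$ and then run the entire $T_1$-development of this section --- the Ground Definability Theorem, the formula $\varphi_{\grd}$, the notion of true index, and the classes $W'_r$, $W_r$ --- with $\her_\lambda$ in place of the ambient universe. First I would observe that since $\GCH$ holds and $\lambda$ is a limit cardinal, every cardinal $\mu<\lambda$ has $2^\mu=\mu^+<\lambda$ (strict, as $\lambda$ is not a successor cardinal), so $\lambda$ is a strong limit cardinal; hence $\her_\lambda\sats T_1$ by the lemma characterizing when $\her_\kappa\sats T_1$. I would also record the two uses of $\GCH$ that recur: $|\her_\kappa|=\kappa$ for infinite cardinals $\kappa$, so that $(\her_\kappa)^N\in\her_\lambda$ whenever $\kappa<\lambda$ and $N\sub\her_\lambda$ is transitive; and $\gamma^+<\lambda$ for $\gamma<\lambda$, with the cardinals of $\her_\lambda$ cofinal in $\lambda$. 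From here on $\varphi_{\grd}$, ``true index'', $W'_r$, $W_r$ denote the versions computed inside $\her_\lambda$.

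What must then be checked is that $\{W_r:r\in\her_\lambda\}$ is exactly the collection of grounds of $\her_\lambda$ together with $\her_\lambda$ itself. The direction ``every $W_r$ is a ground of $\her_\lambda$ or equals $\her_\lambda$'' goes word for word as in the ambient case: if $r$ is not a true index then $W_r=\her_\lambda$ by fiat, and if $r$ is a true index, the models witnessing $\varphi_{\grd}(r,\cdot)$ at the various cardinals $\kappa<\lambda$ are unique by the Ground Definability Theorem (applied inside $\her_\lambda$), hence cohere into an $\her_\lambda$-transitive class, and coherence plus the local forcing calculations of Lemma~\ref{lem:T_1_forcing} show this class satisfies $T_1$ and has $\her_\lambda$ as a generic extension. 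The substantive direction is the converse. Given a ground $W$ of $\her_\lambda$, fix $\PP\in W$ and a $(W,\PP)$-generic $G\in\her_\lambda$ with $W[G]=\her_\lambda$; as $W\sats\AC$ and $\PP\in\her_\lambda$, I may assume $\PP\sub\gamma_0:=|\PP|^W<\lambda$. Pick a cardinal $\gamma$ of $\her_\lambda$ with $\gamma_0<\gamma<\lambda$; then $\gamma$ is a cardinal of $W$ as well, the $\gamma_0$-sized forcing preserves all cardinals $\ge\gamma$, so $\delta:=(\gamma^+)^W=(\gamma^+)^{\her_\lambda}<\lambda$, and $\PP\in(\her_\gamma)^W\sub(\her_\delta)^W=:r\in\her_\lambda$. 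I would then prove $W_r=W$ by two inclusions. For every cardinal $\kappa$ of $\her_\lambda$ with $\delta<\kappa<\lambda$, put $M_\kappa:=(\her_\kappa)^W$: this is transitive, contained in $(\her_\kappa)^{\her_\lambda}=\her_\kappa$, satisfies $T_1^-$ (the lemma on $\her_\xi$, applied in $W$), has $(\her_{\gamma^+})^{M_\kappa}=(\her_\delta)^W=r$ by absoluteness of the $\her$-hierarchy, contains $\PP$, and by Lemma~\ref{lem:T_1_forcing}(6) (applied to $W$ and the $W$-cardinal $\kappa>\gamma_0$) satisfies $M_\kappa[G]=(\her_\kappa)^W[G]=(\her_\kappa)^{\her_\lambda}=\her_\kappa$, with $G$ still $(M_\kappa,\PP)$-generic since $M_\kappa\sub W$. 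So $M_\kappa$ witnesses $\varphi_{\grd}(r,x)$ for each $x\in M_\kappa$, giving $W\sub W'_r$. Conversely, if $\varphi_{\grd}(r,x)$ holds via a candidate $M\sub\her_{\kappa'}$ with $(\her_{\gamma'^+})^M=r$, comparing ordinal heights forces $\gamma'^+=\OR\inter r=\delta$ and $\kappa'\ge\delta$, so $M_{\kappa'}$ is a rival candidate with the same data, and uniqueness from the Ground Definability Theorem gives $M=M_{\kappa'}\sub W$; thus $x\in W$ and $W'_r\sub W$. Finally $W\supseteq\lambda=\OR^{W[G]}=\OR^W$, so $W$ is unbounded in $\her_\lambda$, $r$ is a true index, and $W_r=W'_r=W$.

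The step I expect to be the main obstacle is exactly this converse argument: one must choose the parameter $r$ and then check that the existential witness $M$ in $\varphi_{\grd}(r,\cdot)$ is pinned down, simultaneously at every relevant level $\kappa<\lambda$, to $(\her_\kappa)^W$ --- which is where it matters that $\her_\lambda\sats T_1$ (so the Ground Definability Theorem and Lemma~\ref{lem:T_1_forcing} are available inside $\her_\lambda$), that $\her_\xi$ is absolute between a transitive $T_1^-$-model and its own $\her$-levels, and that $\gamma_0$-sized forcing over $W$ leaves $(\her_\kappa)^W$ fixed for $W$-cardinals $\kappa>\gamma_0$. No new ideas beyond the section are needed, but keeping straight which ordinals are cardinals in $W$, in $\her_\lambda$, and in the intermediate models is the delicate bookkeeping; a smaller but real point is confirming, in the easy direction, that the union of the unique candidate models inherits all of $T_1$ --- Powerset and the two $\Sigma_1$-schemes --- rather than merely $T_1^-$, which again reduces to coherence and Lemma~\ref{lem:T_1_forcing}.
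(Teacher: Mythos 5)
Your proof is correct and takes essentially the same route as the paper: the key observation is that $\GCH$ plus $\lambda$ being a limit cardinal makes $\lambda$ a strong limit, hence $\her_\lambda\sats T_1$, so the Ground Definability Theorem and the formula $\varphi_{\grd}$ apply inside $\her_\lambda$. The paper leaves the remaining verification (that $\{W_r:r\in\her_\lambda\}$ enumerates exactly the grounds of $\her_\lambda$) implicit — it is isolated as the Lemma stated just after this Corollary — whereas you have spelled out that argument in full; the details you supply (choosing $r=(\her_{\delta})^W$, matching heights to pin down $\gamma'$, invoking Lemma~\ref{lem:T_1_forcing} to see $(\her_\kappa)^W[G]=\her_\kappa$) are exactly what is needed and match the paper's intent.
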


\begin{rem}
 Assume $\ZFC+\GCH$. Then for each limit ordinal $\xi$,
$V_{\om+\xi}$ is equivalent in the codes to
 $\her_{\aleph_{\xi}}$.
 So one can correctly formulate ``grounds''
 of $V_{\om+\xi}$, and they are definable over $V_{\om+\xi}$ from parameters.
\end{rem}

So we have the standard uniform definability of grounds, just assuming $T_1$:
\begin{lem}
Let $M\sats T_1$. Then
$\{W_r^M\bigm|r\in M\}$
enumerates exactly the grounds of $M$ (with repetitions, including 
$M$ itself).\end{lem}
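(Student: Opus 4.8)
The plan is to show both inclusions, with one being essentially immediate. For the easy direction, suppose $W$ is a ground of $M$. By ground definability under $T_1$ (the theorem just proved), together with the forcing calculations of Lemma~\ref{lem:T_1_forcing}, there is a witnessing triple: a cardinal pair $\gamma<\kappa$ in $M$, a forcing $\PP$, and an $(W,\PP)$-generic $G\in M$ with $W\inter\her_\kappa^M=M^{\mathrm{ground}}$ at each level, so that $W$ is determined on $\her_\kappa^M$ by the formula $\varphi_{\mathrm{grd}}(r,x)$ with $r=(\her_{\gamma^+})^W$. Since $M\sats T_1$, every set of $M$ lies in some $\her_\kappa^M$, and the definition of $W_r^M$ unions these local pieces together. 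Hence $W=W_r^M$ for this $r$. Conversely, if $r$ is a true index of $M$, then by construction $W_r^M$ is a proper class which is $M$-transitive, satisfies $T_1^-$ (indeed $T_1$, as $M\sats T_1$), and is witnessed to be a ground of each $\her_\kappa^M$ via a fixed $(\PP,G)$ — and, exactly as in the limit-cardinal step of the ground-definability proof, the choice of $(\PP,G)$ at one level works at all higher levels, so these generics cohere into a single $G\in M$ with $W_r^M[G]=M$. Thus $W_r^M$ is a ground of $M$. For non-true indices, $W_r^M=M$ by fiat, and $M$ is trivially a ground of itself (via trivial forcing), so nothing is lost; this also accounts for the parenthetical "including $M$ itself".

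The slightly more delicate point, which I would address explicitly, is \emph{uniqueness up to the parameter}, i.e.\ that the enumeration is honest: distinct grounds get distinct canonical indices, and the same ground is not missed. This is where I would appeal directly to the uniqueness clause in the ground-definability theorem: a ground $W$ is uniquely determined by the pair $(\gamma,(\her_{\gamma^+})^W)$, so taking $r=(\her_{\gamma^+})^W$ for the least suitable $\gamma$ gives a canonical index, and $\varphi_{\mathrm{grd}}$ recovers exactly $W$ from it. The "with repetitions" hedge in the statement covers the fact that a single ground $W$ may arise as $W_r^M$ for several parameters $r$ (e.g.\ $r'=(\her_{\delta^+})^W$ for larger $\delta$), and also that $V$-as-ground corresponds both to true indices witnessing trivial forcing and to all non-true $r$; so no claim of injectivity is being made, only surjectivity onto the set of grounds together with well-definedness of each $W_r^M$.

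The main obstacle — really the only one requiring care rather than bookkeeping — is checking that $W_r^M$, for a true index $r$, is genuinely a \emph{ground} of $M$ rather than merely an $M$-transitive inner model approximated level-by-level by grounds of the $\her_\kappa^M$'s. That is, one must produce a \emph{single} forcing $\PP\in W_r^M$ and a \emph{single} generic $G\in M$ with $W_r^M[G]=M$, rather than a coherent system of such at each $\her_\kappa^M$. I expect this to follow by precisely the argument already given in the limit-cardinal case of the ground-definability theorem: if $(\PP_0,G_0)$ works for $\her_{\kappa_0}^M$ and $(\PP_1,G_1)$ for $\her_{\kappa_1}^M$ with $\kappa_0\le\kappa_1$, then since $(\her_{\gamma^+})^{W_r}$ is fixed equal to $r$ and $r[G_0]=(\her_{\gamma^+})^M=r[G_1]$, one gets $W_r\inter\her_{\kappa_0}^M[G_0]=\her_{\kappa_0}^M$ using $(\PP_0,G_0)$ already, so a single $(\PP,G)$ (the one working at the first relevant level) suffices throughout, and $G\in M$ since $M\sats T_1$ gives $G\in\her_\kappa^M$ for some $\kappa$. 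Everything else — $M$-transitivity of $W_r^M$, the theory $T_1$ holding in it (via Lemma~\ref{lem:T_1_forcing}\ref{item:M[G]_sats_T_1} applied inside $M$), and the converse inclusion — is routine unwinding of the definitions.
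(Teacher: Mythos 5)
Your proof is correct and matches what the paper leaves to the reader: the lemma follows routinely from the definition of $\varphi_{\mathrm{grd}}$ and $W_r$, the ground-definability theorem, and Lemma~\ref{lem:T_1_forcing}, with the coherence argument from the limit-cardinal case of ground definability supplying the single $(\PP,G)$ that witnesses groundness globally, exactly as you say. One small point: Lemma~\ref{lem:T_1_forcing}\ref{item:M[G]_sats_T_1} concerns the generic \emph{extension} satisfying $T_1$, not the ground; that $W_r^M\sats T_1$ when $M\sats T_1$ is a separate (though still routine) verification, using that each local model appearing in $\varphi_{\mathrm{grd}}$ satisfies $T_1^-$ by definition and that powersets stabilize at bounded levels of the $\her_\kappa$ hierarchy.
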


\begin{rem}Assume $T_1$. Note that $\varphi_{\grd}$ is $\Sigma_2$,
and the assertion ``$r$ is a true index'' is $\Pi_2$.
(In fact, there are fixed $\Sigma_2$ and $\Pi_2$ formulas,
such that $T_1$ proves that these fixed formulas always work.)
Moreover, letting $\xi=\card(\trcl(\{r,x\}))$, note that
$\varphi_{\mathrm{grd}}(r,x)$ is absolute between $V$ and
$\her_{(2^{\xi})^+}$. (It is witnessed
by some $(\her_{\xi^+},M)$,  a structure
of size $2^{\xi}$.) Therefore:\end{rem}

\begin{fact}[Local definability of grounds]\label{fact:local_ground_def}
Assume $T_1$+``There is a proper class of strong limit cardinals''.
Let $\lambda$ be a strong limit cardinal.
Let $r\in\her_\lambda$ be a true index.
Then $\her_\lambda\sats$``$r$ is a true index''
 and
 $W_r^{\her_\lambda}=W_r\inter\her_\lambda=\her_\lambda^{W_r}$.
\end{fact}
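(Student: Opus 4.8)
The plan is to show that $\her_\lambda$ computes the class $W'_r$ correctly on all of its elements, and then read off the three equalities; the essential input is the locality of $\varphi_{\grd}$ recorded in the Remark just above. First I would set up the needed absoluteness. Since $\lambda$ is a strong limit cardinal, $\her_\lambda\sats T_1$ by the Lemma above, so $W_r^{\her_\lambda}$ is meaningful; moreover $\her_\lambda$ is transitive with $\OR\inter\her_\lambda=\lambda$, and it correctly computes the transitive closure of each of its elements, the cardinality of any set of $V$-cardinality $<\lambda$, the powerset of any set of $V$-cardinality $<\lambda$ (strong-limit-ness putting such powersets into $\her_\lambda$), and hence successor cardinals below $\lambda$ and $\her_\mu$ for each cardinal $\mu<\lambda$. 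Now fix $x\in\her_\lambda$ and put $\xi=\card(\trcl(\{r,x\}))<\lambda$ and $\mu=(2^\xi)^+$; then $\mu<\lambda$ since $\lambda$ is a strong limit, and $\her_\lambda$ evaluates $\xi$, $2^\xi$, $\mu$ and $\her_\mu$ exactly as $V$ does. By the Remark the equivalence $\varphi_{\grd}(r,x)\leftrightarrow\her_\mu\sats\varphi_{\grd}(r,x)$ is a theorem of $T_1$ (for the fixed formula $\varphi_{\grd}$), hence holds in both $V$ and $\her_\lambda$; combining this with $(\her_\mu)^{\her_\lambda}=\her_\mu$ gives
\[\her_\lambda\sats\varphi_{\grd}(r,x)\iff\her_\mu\sats\varphi_{\grd}(r,x)\iff V\sats\varphi_{\grd}(r,x).\]
Since $r$ is a true index of $V$, $W_r=W'_r$, so this says $(W'_r)^{\her_\lambda}=W_r\inter\her_\lambda$.

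Next I would deduce $\her_\lambda\sats$``$r$ is a true index'' and the first equality. As $r$ is a true index, $W_r=W'_r$ is a proper class; being a union of transitive models of $T_1^-$ of unbounded ordinal height, it satisfies $\OR\sub W_r$, so $\lambda\sub W_r\inter\her_\lambda=(W'_r)^{\her_\lambda}$. Hence $(W'_r)^{\her_\lambda}$ has elements of arbitrarily large rank below $\lambda$, i.e.\ is a proper class in the sense of $\her_\lambda$, which is precisely $\her_\lambda\sats$``$r$ is a true index''. By the convention defining $W_r$ inside $\her_\lambda$, we conclude $W_r^{\her_\lambda}=(W'_r)^{\her_\lambda}=W_r\inter\her_\lambda$.

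Finally I would prove $W_r\inter\her_\lambda=\her_\lambda^{W_r}$. For ``$\supseteq$'': $\trcl$ is absolute for the transitive class $W_r$ and $W_r\sub V$, so any bijection in $W_r$ witnessing $x\in\her_\lambda^{W_r}$ already lies in $V$, giving $x\in\her_\lambda^V\inter W_r$. For ``$\sub$'' I would use that $W_r$, being a true index's class, is a ground of $V$: fix $\PP\in r=(\her_{\gamma^+})^{W_r}$ and a $(W_r,\PP)$-generic $G$ with $W_r[G]=V$, noting $\card^{W_r}(\PP)\le\gamma<\lambda$ (the last inequality since the ordinal $(\gamma^+)^{W_r}$ is contained in $r$ and $\card^V(r)<\lambda$) and that $\lambda$ is still a cardinal in $W_r$. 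Given $x\in W_r$ with $\card^V(\trcl(x))<\lambda$, fix a bijection $g:\mu\to\trcl(x)$ in $V$ with $\mu<\lambda$ and a $\PP$-name $\dot g\in W_r$ for $g$. Working in $W_r$, the partial function on $\mu\cross\PP$ sending $(\alpha,p)$ to the unique $s$ with $p\forces\dot g(\check\alpha)=\check s$ (when it exists) has range $\trcl(x)$, since every $s\in\trcl(x)$ equals $g(\alpha)$ for some $\alpha<\mu$ and is so forced by some $p\in G$. Thus in $W_r$, $\trcl(x)$ is a surjective image of a subset of $\mu\cross\PP$, so $\card^{W_r}(\trcl(x))\le\card^{W_r}(\mu\cross\PP)<\lambda$, i.e.\ $x\in\her_\lambda^{W_r}$.

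I expect the main obstacle to be the bookkeeping in the first step: one must check that $\her_\lambda$ agrees with $V$ about $\xi$, $2^\xi$, $(2^\xi)^+$ and $\her_{(2^\xi)^+}$, and that the locality of $\varphi_{\grd}$ from the Remark is a genuine theorem of $T_1$, usable inside $\her_\lambda$. After that the rest is routine, the name argument of the last step relying only on $W_r$ being an honest (local) ground and not merely a $\Sigma_2$-definable class.
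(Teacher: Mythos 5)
Your proposal is correct and fills in precisely the argument the paper intends: the paper states this Fact as an immediate corollary of the preceding Remark (locality of $\varphi_{\grd}$ to $\her_{(2^\xi)^+}$), and your first step is exactly the detailed unfolding of that ``Therefore.'' The remaining two steps — reading off properness of $(W'_r)^{\her_\lambda}$ from $\lambda\sub W_r\inter\her_\lambda$, and the standard small-forcing argument showing $W_r\inter\her_\lambda\sub\her_\lambda^{W_r}$ — are the routine verifications one expects behind a ``Fact,'' and you carry them out correctly.
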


It seems it might be possible, however, that $\her_\lambda\sats$``$r$ is a true 
index'' while $r$ fails to be a true index in $V$.

The remaining facts in this section, and the rest of the paper,
have a background theory of $\ZFC$. We have not investigated
to what extent things go through under $T_1$.
By \cite[Proposition 5.1]{usuba_ddg} and an
examination of its proof, we have:

\begin{fact}[Local set-directedness of 
grounds (Usuba)]\label{fact:local_ground_directedness}
(Assume $\ZFC$.) Let $\theta$ be a strong limit cardinal and $R\in\her_\theta$. 
 Then there is
 $t\in\her_\theta$ with
 $t\in\bigcap_{r\in R}W_r$
 and $W_t\sub W_r$ and $W_t=W_t^{W_r}$ for each
 $r\in R$. In particular, $W_t\sub\bigcap_{r\in R}W_r$.
\end{fact}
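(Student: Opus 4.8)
The plan is to localize Usuba's proof that the grounds are downward set-directed, working below $\theta$, and then to read off an appropriate index for the common lower bound it produces.

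First I would discard from $R$ every $r$ with $W_r=V$ (these contribute nothing), and so assume each $r\in R$ is a true index; fix $\PP_r\in r$ and a $(W_r,\PP_r)$-generic filter $G_r$ with $W_r[G_r]=V$. Since $R\in\her_\theta$, the set $\trcl(R)$ has cardinality ${<}\theta$, and each $\PP_r$ (being a member of some $r\in\trcl(R)$) lies in $\trcl(R)$, so I may fix a regular cardinal $\kappa<\theta$ with $R\in\her_\kappa$ and $\card(\PP_r)<\kappa$ for all $r\in R$. As $\theta$ is a strong limit, $2^{\kappa^{++}}<\theta$; moreover $\theta$ remains a strong limit cardinal in any inner model $W\sub V$ for which $V$ is a set-generic extension of $W$, since cardinals, and the values $2^\lambda$ for $\lambda<\theta$, can only decrease passing from $W$ to $V$ while $\theta$ stays a cardinal in $W$. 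Now I would run Usuba's proof of \cite[Proposition 5.1]{usuba_ddg} with this $\kappa$ as the regular cardinal of that argument. This yields a transitive proper class $W\sats\ZFC$ with $W\sub\bigcap_{r\in R}W_r$ such that $W$ has the $\kappa^{++}$-uniform covering property for $V$. By Bukovsky's theorem \cite[Fact 3.9]{usuba_ddg}, together with the set-sized refinement \cite[Theorem 3.11]{schindler_fsttimtg}, $W$ is a ground of $V$ via a forcing $\PP\in W$ that may be taken to lie in $\her_\theta^W$ (of $W$-cardinality roughly $2^{\kappa^{++}}<\theta$). Passing to the Boolean completion $\BB\in W$ of $\PP$ — still in $\her_\theta^W$, as $\theta$ is a strong limit in $W$ — I fix $\gamma<\theta$ with $\BB\in(\her_{\gamma^+})^W$ and set $t=(\her_{\gamma^+})^W$. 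Then $t$ is a true index with $W_t=W$; since $t\in W\sub W_r$ we get $t\in\bigcap_{r\in R}W_r$; since $W\sub V$, $\gamma<\theta$ and $\theta$ is a strong limit, $t\in\her_\theta$; and $W_t=W\sub W_r$ for every $r\in R$.

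It then remains to check $W_t=W_t^{W_r}$ for each $r\in R$. Write $V=W[G]$ with $G$ generic over $W$ for $\BB$. Since $W\sub W_r\sub V=W[G]$ and $W_r\sats\ZFC$, the intermediate model theorem gives $W_r=W[G\inter\BB_r]$ for some complete (in $W$) subalgebra $\BB_r\sub\BB$ with $\BB_r\in W$; as $\BB_r\sub\BB$ we get $\BB_r\in(\her_{\gamma^+})^W=t$. Hence $W$ is a ground of $W_r$ via a forcing in $t$; more precisely, for cofinally many $W$-cardinals $\kappa'$, the structure $(\her_{\kappa'})^W$ is, by the ground definability theorem applied inside $W_r$, the unique transitive model of $T_1^-$ that is a ground of $(\her_{\kappa'})^{W_r}$ via a forcing in $t$ and whose $(\her_{\gamma^+})$-level equals $t$. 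But $W_t^{W_r}$ is by definition exactly the ground of $W_r$ attached to the index $t$, and is assembled from these same unique transitive models, so $W_r\sats$``$t$ is a true index'' and $W_t^{W_r}=W=W_t$. The ``in particular'' clause is immediate from $W_t\sub W_r$ for all $r$.

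I expect two steps to need real care. The first is the re-examination of Usuba's proof of \cite[Proposition 5.1]{usuba_ddg}: one must confirm that, carried out with $\kappa<\theta$, it really produces a single $W$ contained in every $W_r$ with the $\kappa^{++}$-uniform covering property for $V$, and track the bound on the resulting forcing — this is bookkeeping on Usuba's argument, not a new idea. The second, which I think is the more delicate, is the locality clause $W_t=W_t^{W_r}$: here I would need to (i) exploit the intermediate model theorem to express $W_r$ as a $\BB$-generic extension of $W$ via a \emph{complete subalgebra} $\BB_r\sub\BB$, so that a single forcing $\BB\in t$ works uniformly in $r$; (ii) carry out the cardinal arithmetic — leaning on $\theta$ being a strong limit and on the set-sized form of Bukovsky's theorem — that keeps $\BB$, and hence $t$, inside $\her_\theta$; and (iii) invoke the ground definability theorem to match the computation of $W_t$ inside $W_r$ with its computation in $V$.
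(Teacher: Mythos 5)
Your proof follows essentially the same route as the paper: localize Usuba's Proposition 5.1 with a regular $\kappa < \theta$ dominating $\card(\trcl(R))$, invoke the set-sized refinement of Bukovsky's theorem from \cite{schindler_fsttimtg} to get a ground $W$ of $V$ via a forcing of size ${<}\theta$ in $W$, pass to the Boolean completion $\BB$, and use the intermediate-model theorem to realize each $W_r$ as $W[G\inter\BB_r]$ for a complete subalgebra $\BB_r\in(\her_{\gamma^+})^W$, which together with ground definability gives $W_t=W_t^{W_r}$. Both the choice of $t$ and the locality argument match the paper's; the proposal just spells out the $W_t=W_t^{W_r}$ step a bit more explicitly.
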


\begin{proof}
We refer here to the \emph{$\lambda$-uniform covering property for $V$};
see  \cite[Definition 
4.2]{usuba_ddg} or \cite[Definition 2.1]{schindler_fsttimtg}.
Let us set up some of the notation from the proof of \cite[Proposition 
5.1]{usuba_ddg}. Let $X=R$ (following the notation
from \cite{usuba_ddg}).\footnote{We wrote $R$ in the 
statement of the fact for consistency with later notation.}We may assume 
that $X$ is a set of true indices $r$.
For $r\in X$ let $\PP_r\in W_r$ be a forcing witnessing that $r$ is a true 
index. 
Let $\kappa$ be a regular cardinal with $\kappa>\card(X)$
and $\kappa>\card(\PP_r)$ for each $r$ (so it suffices
if $\kappa>\card(\trcl(X))$). Then the proof of \cite[Proposition 
5.1]{usuba_ddg} constructs a ground $W\sub\bigcap_{r\in X}W_r$
with the $\lambda=\kappa^{++}$-uniform covering property for $V$.
Therefore by \cite[Theorem 3.3]{schindler_buk},
there is $\PP\in W$ such that $W\sats$``$\card(\PP)=2^{2^{<\lambda}}$''
and $W$ is a ground of $V$ via $\PP$. Let
$\gamma_0=\card^W(\PP)$ and $t_0=(\her_{\gamma_0^+})^W$.
So $\gamma_0<\theta$, $t_0$ is a true index and $W=W_{t_0}$.
Let $\BB\in W$ be such that $W\sats$``$\BB$ is the complete Boolean algebra 
determined by $\PP$''
(so $\PP$ is a dense sub-order of $\BB$).
So $\card^W(\BB)\leq(2^{\gamma_0})^W<\theta$.
Then by \cite[Lemma 15.43]{jech} (or \cite[Fact 
3.1]{usuba_ddg}) for each $r\in X$ there is some $\BB_r\in W$
with $\BB_r\sub\BB$ and there is a $(W,\BB_r)$-generic
$G_r$ such that $W[G_r]=W_r$.
So letting $\gamma=(2^{\gamma_0})^W$, then  $t=(\her_{\gamma^+})^W$
is as desired.
\end{proof}

An easy corollary of local set-directedness is:
\begin{fact}[Invariance of $\Mmm_\kappa$]\label{fact:mantle_invariance}
Let $\kappa$ be a strong limit cardinal
and $r\in\her_\kappa$.
 Then $\Mmm_\kappa^{W_r}=\Mmm_\kappa$. 
\end{fact}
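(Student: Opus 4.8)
We may assume $r$ is a true index, since otherwise $W_r=V$ and there is nothing to prove; and since $r\in\her_\kappa$, any forcing $\PP\in r$ witnessing that $r$ is a true index has $\card(\PP)<\kappa$, so $W_r$ is in particular a $<\kappa$-ground of $V$. The plan is to prove the two inclusions separately, using Fact~\ref{fact:local_ground_directedness}, the uniform definability of grounds, and the hypothesis that $\kappa$ is a strong limit cardinal to keep all the relevant forcings of cardinality $<\kappa$.

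For $\Mmm_\kappa\sub\Mmm_\kappa^{W_r}$, I would show that every $<\kappa$-ground of $W_r$ (in the sense of $W_r$) is a $<\kappa$-ground of $V$. So let $W=W_s^{W_r}$ be such a ground, with $W_r=W[G]$ for a $(W,\PP)$-generic $G\in W_r$ where $\card^W(\PP)<\kappa$, and recall $V=W_r[H]$ for a $(W_r,\QQ)$-generic $H$ with $\QQ\in r$, so $\card^{W_r}(\QQ)<\kappa$. Then $V=W[G*H]$, where $G*H$ is $(W,\PP*\dot\QQ)$-generic, and $\card^W(\PP*\dot\QQ)<\kappa$ because this cardinality is bounded by $(2^\mu)^W$ for some $\mu<\kappa$ and $\kappa$ is a strong limit (and $W$, being a ground of $V$, computes such powersets inside those of $V$). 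Since $W$ is a proper class in $V$ (same ordinals as $W_r$ and $V$) and models $\ZFC$, $W$ is a $<\kappa$-ground of $V$, so $\Mmm_\kappa\sub W$. As $s$ was arbitrary, $\Mmm_\kappa\sub\Mmm_\kappa^{W_r}$.

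For $\Mmm_\kappa^{W_r}\sub\Mmm_\kappa$, fix an arbitrary $<\kappa$-ground $U$ of $V$; it suffices to show $\Mmm_\kappa^{W_r}\sub U$. Choosing the index $u$ with $U=W_u$ so that the witnessing forcing has $U$-cardinality $<\kappa$, we get $u\in\her_\kappa$ (here one uses that $U$ is a ground, so $\pow(\delta)^U\sub\pow(\delta)^V$ for the relevant $\delta<\kappa$, and that $\kappa$ is a strong limit). So $R=\{r,u\}\in\her_\kappa$, and applying Fact~\ref{fact:local_ground_directedness} with $\theta=\kappa$ gives $t\in\her_\kappa$ with $W_t\sub W_r\inter U$, $W_t=W_t^{W_r}$ and $W_t=W_t^{W_u}$. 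Inspecting the proof of that fact, the forcing witnessing that $t$ is a true index --- both for $V$ and for $W_r$ --- can be taken of $W_t$-cardinality $<\kappa$ (it is of the form $(2^\mu)^{W_t}$ or $(2^{2^{<\lambda}})^{W_t}$ for suitable $\mu,\lambda<\kappa$, again using the strong-limit hypothesis) and to lie in the index $t$; hence $W_t$ is a $<\kappa$-ground of $V$, and, since $\card^{W_r}(\PP_t)\leq\card^{W_t}(\PP_t)<\kappa$ for the relevant $\PP_t\in t$, also $W_r\sats$``$W_t$ is a $<\kappa$-ground''. Therefore $\Mmm_\kappa^{W_r}\sub W_t\sub U$, and since $U$ was arbitrary, $\Mmm_\kappa^{W_r}\sub\Mmm_\kappa$.

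The main thing to be careful about is the cardinality bookkeeping: that two-step iterations of forcings of size $<\kappa$ stay of size $<\kappa$, that the index $t$ delivered by local set-directedness together with the forcing witnessing it lands in $\her_\kappa$ (which is exactly where the strong-limit hypothesis on $\kappa$ enters, via the explicit bounds in the proof of Fact~\ref{fact:local_ground_directedness}), and that ``being a $<\kappa$-ground'' is computed the same way in $W_r$ as in $V$ --- the latter following from the uniform, local definability of grounds, since each ground $W_s^{W_r}$ comes equipped with its witnessing forcing and generic inside $W_r$. None of these points is hard, but together they are what the strong-limit assumption is for.
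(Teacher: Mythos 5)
Your proof is correct and takes exactly the route the paper indicates: the paper gives no explicit argument, presenting the fact as ``an easy corollary of local set-directedness'' (Fact~\ref{fact:local_ground_directedness}), and your write-up carries out precisely that reduction --- the two-step iteration for $\Mmm_\kappa\sub\Mmm_\kappa^{W_r}$, and directedness plus the cardinality bookkeeping (using that $\kappa$ is a strong limit and that $W_r$ is a ground, so all the relevant $\her_\kappa$'s and ``$<\kappa$-ground'' predicates agree) for $\Mmm_\kappa^{W_r}\sub\Mmm_\kappa$.
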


\begin{lem}[Absoluteness of $\Mmm_\kappa$]\label{lem:ground_Sigma_2_elem}
Let $\kappa<\lambda$ be strong limit cardinals and suppose
$\her_\lambda=V_\lambda\elem_{2}V$. Then for each $r\in\her_\kappa$, we 
have:
\begin{enumerate}[label=\tu{(}\roman*\tu{)}]
\item\label{item:grounds_mantle_absolute} ${<\kappa}$-grounds and $\Mmm_\kappa$ 
are
absolute to $V_\lambda$: 
\[ W_r^{V_\lambda}=W_r\inter V_\lambda=V_\lambda^{W_r}\text{
and }\Mmm_\kappa^{V_\lambda}=\Mmm_\kappa\inter 
V_\lambda=V_\lambda^{\Mmm_\kappa},\]
\item\label{item:elem_ground} $V_\lambda^{W_r}\elem_2 W_r$,
\item\label{item:local_mantle_invariance} 
$\Mmm_\kappa^{V_\lambda^{W_r}}=\Mmm_\kappa^{W_r}\inter 
V_\lambda^{W_r}=\Mmm_\kappa\inter V_\lambda=\Mmm_\kappa^{V_\lambda}$.
\end{enumerate}
\end{lem}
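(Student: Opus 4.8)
The plan is to prove \ref{item:grounds_mantle_absolute}, \ref{item:elem_ground}, \ref{item:local_mantle_invariance} in turn, each building on the previous, and using Fact \ref{fact:local_ground_def} (local definability of grounds) and Fact \ref{fact:local_ground_directedness} (local set-directedness) as the main engines. For \ref{item:grounds_mantle_absolute}: since $\kappa$ is a strong limit and $\lambda>\kappa$, every $r\in\her_\kappa$ lies in $\her_\lambda=V_\lambda$, and the relation ``$r$ is a true index'' together with the ground $W_r$ is computed by a $\Sigma_2$ (resp.\ $\Pi_2$) formula that, as noted in the remark preceding Fact \ref{fact:local_ground_def}, is witnessed inside $\her_{(2^\xi)^+}$ for $\xi=\card(\trcl(\{r\}))<\kappa$. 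Since $\kappa$ is a strong limit, $(2^\xi)^+<\kappa<\lambda$, so this witness lies in $V_\lambda$; combined with $V_\lambda\elem_2 V$ this gives $W_r^{V_\lambda}=W_r\cap V_\lambda=V_\lambda^{W_r}$. For the mantle: $x\in\Mmm_\kappa$ iff $x\in W_r$ for all ${<\kappa}$-grounds $W_r$ (with $r$ ranging over true indices), a $\Pi_1$-over-$\Sigma_2$ statement; but by Fact \ref{fact:local_ground_directedness} the ${<\kappa}$-grounds are set-directed via indices in $\her_\theta$ for any strong limit $\theta$, so membership in $\Mmm_\kappa\cap V_\lambda$ only needs to quantify over indices in $V_\lambda$, and another application of $V_\lambda\elem_2 V$ (to check that the family of ${<\kappa}$-grounds is the same as computed in $V_\lambda$) yields $\Mmm_\kappa^{V_\lambda}=\Mmm_\kappa\cap V_\lambda=V_\lambda^{\Mmm_\kappa}$.

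For \ref{item:elem_ground}: I would apply \ref{item:grounds_mantle_absolute} \emph{inside} $W_r$. Since $W_r\sats\ZFC$ and $r$ witnesses that $V=W_r[G]$ for a small forcing, $\lambda$ remains a strong limit cardinal in $W_r$ and, because $V_\lambda\elem_2 V$, one checks that $V_\lambda^{W_r}=V_\lambda\cap W_r\elem_2 W_r$ — the point being that $\Sigma_2$ truth in $W_r$ of a statement with parameters in $V_\lambda^{W_r}$ reflects down to $V_\lambda^{W_r}$ by the same strong-limit/witness-size argument applied within $W_r$, for which one needs $\lambda$ to be suitably closed in $W_r$; since $W_r$ and $V$ agree on $\her_\lambda$ up to a small forcing and $\lambda$ is a strong limit, this holds. (Alternatively, $V_\lambda\elem_2 V$ plus $\her_\lambda^{W_r}=\her_\lambda\cap W_r$ from \ref{item:grounds_mantle_absolute} transfers $\Sigma_2$-elementarity from the $V$-side to the $W_r$-side by a routine reflection.)

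For \ref{item:local_mantle_invariance}: first, $\Mmm_\kappa^{W_r}=\Mmm_\kappa$ is exactly Fact \ref{fact:mantle_invariance} (as $\kappa$ is a strong limit and $r\in\her_\kappa$). Then apply \ref{item:grounds_mantle_absolute} inside $W_r$, using \ref{item:elem_ground} ($V_\lambda^{W_r}\elem_2 W_r$) in the role of $V_\lambda\elem_2 V$, to get $\Mmm_\kappa^{V_\lambda^{W_r}}=\Mmm_\kappa^{W_r}\cap V_\lambda^{W_r}$. Now chain the equalities: $\Mmm_\kappa^{W_r}\cap V_\lambda^{W_r}=\Mmm_\kappa\cap(V_\lambda\cap W_r)=(\Mmm_\kappa\cap V_\lambda)\cap W_r$, and since $\Mmm_\kappa\cap V_\lambda=V_\lambda^{\Mmm_\kappa}\sub\Mmm_\kappa\sub W_r$ (the mantle is contained in every ground), this equals $\Mmm_\kappa\cap V_\lambda=\Mmm_\kappa^{V_\lambda}$ by \ref{item:grounds_mantle_absolute}.

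The main obstacle I anticipate is part \ref{item:elem_ground}: one must be careful that $\lambda$ has the right closure properties \emph{in $W_r$}, not just in $V$, so that the ``witness of size $2^\xi$'' argument for $\Sigma_2$-reflection runs internally to $W_r$. This is where the hypothesis that the forcing witnessing $V=W_r[G]$ has size ${<\kappa}<\lambda$, together with $\lambda$ being a strong limit, is used: small forcing does not move the class of strong limit cardinals above its size, nor the computation of $\her_\lambda$, so $W_r$ and $V$ agree on enough to run the reflection. Everything else is bookkeeping with the $\Sigma_2/\Pi_2$ complexity bounds already recorded in the remarks preceding Fact \ref{fact:local_ground_def}.
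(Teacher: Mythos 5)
Parts \ref{item:grounds_mantle_absolute} and \ref{item:local_mantle_invariance} of your proposal are fine and essentially match the paper's argument; the appeal to Fact \ref{fact:local_ground_directedness} in \ref{item:grounds_mantle_absolute} is unnecessary (one just intersects the equalities $W_r^{V_\lambda}=V_\lambda^{W_r}$ over $r\in V_\kappa$), but it does no harm.

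The gap is in part \ref{item:elem_ground}, and it is precisely the place you flagged as the ``main obstacle'' --- but your proposed fix does not actually close it. You want $V_\lambda^{W_r}\elem_2 W_r$ from $V_\lambda\elem_2 V$, and you suggest this follows because $\lambda$ stays a strong limit in $W_r$ and ``$W_r$ and $V$ agree on $\her_\lambda$ up to a small forcing.'' But being a strong limit in $W_r$ is far from sufficient for $\Sigma_2$-elementarity (most strong limit ranks are not $\Sigma_2$-elementary), and the ``same strong-limit/witness-size argument'' you cite establishes only the local absoluteness of $\varphi_{\grd}$ (Fact \ref{fact:local_ground_def}), not $\Sigma_2$-reflection of arbitrary statements. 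Your parenthetical alternative (``routine reflection'') also skips the real issue: given $\Sigma_2$ $\varphi$ and $x\in W_r\inter V_\lambda$, the statement ``$W_r\sats\varphi(x)$'' is \emph{not} obviously $\Sigma_2$ over $V$ --- naively it has an extra layer of quantification through the $\Sigma_2$-definition of $W_r$ itself --- so you cannot directly hand it to $V_\lambda\elem_2 V$. The paper handles this by invoking Fact \ref{fact:local_ground_def} to rewrite ``$W_r\sats\varphi(x)$'' as the genuinely $\Sigma_2$ (in $V$) assertion ``there is a strong limit cardinal $\xi$ such that $W_r^{\her_\xi}\sats\varphi(x)$'' (here $W_r^{\her_\xi}$ is a set, computed inside $\her_\xi$); it then reflects that to $V_\lambda$ via $V_\lambda\elem_2 V$, obtains $\xi<\lambda$ with $W_r\inter\her_\xi\sats\varphi(x)$, and finally uses $W_r\inter\her_\xi\elem_1 W_r\inter V_\lambda$ to push $\Sigma_2$ truth upward to $W_r\inter V_\lambda$. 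That rewriting step --- converting a statement about the proper class $W_r$ into a $\Sigma_2$ statement about its local fragments --- is the crucial idea, and it is the one your sketch is missing.
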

\begin{proof}
 Part \ref{item:grounds_mantle_absolute}:
 The absoluteness of $W_r$ is because the class true indices
 $r$  is $\Pi_2$, and each $W_r$ is
  $\Sigma_2(\{r\})$. But then clearly
 \[ \Mmm_\kappa^{V_\lambda}=\bigcap_{r\in 
V_\kappa}W_r^{V_\lambda}=\bigcap_{r\in 
V_\kappa}V_\lambda^{W_r}=V_\lambda^{\Mmm_\kappa}.\]
 
 Part \ref{item:elem_ground}:
 If $W_r=V$ then this is trivial.
 Suppose $W_r\psub V$ and let $\varphi$ be $\Sigma_2$
 and $x\in W_r\inter V_\lambda$ and suppose that
 $W_r\sats\varphi(x)$. Then by Fact \ref{fact:local_ground_def},
 $V\sats\psi(x)$ where $\psi$
 asserts ``There is a strong limit cardinal $\xi$
 such that $W_r^{\her_\xi}\sats\varphi(x)$'',
 but this is also $\Sigma_2$, so $V_\lambda\sats\psi(x)$,
 so letting $\xi<\lambda$ witness this,
 again by Fact \ref{fact:local_ground_def}, we get $W_r\inter 
\her_\xi\sats\varphi(x)$,
 so $W_r\inter V_\lambda\sats\varphi(x)$.

 Part \ref{item:local_mantle_invariance}: This follows from the previous parts
 and Fact \ref{fact:mantle_invariance}.
\end{proof}

\begin{dfn}\label{dfn:amenable-closure}
Let $N$ be an inner model. Let $f:\kappa\to N$.
Say that $f$
is \emph{amenable to $N$} iff $f\rest\alpha\in N$ for every 
$\alpha<\kappa$. Say that $N$ is \emph{$\kappa$-amenably-closed}
 iff for every $f:\kappa\to N$, if $f$ is amenable to $N$ then $f\in N$.
Say that $N$ is \emph{$\kappa$-stationarily-computing}
(\emph{$\kappa$-unboundedly-computing})
iff for every $f:\kappa\to N$,
there is a stationary (unbounded) $A\sub\kappa$ such that $f\rest A\in N$.
\end{dfn}

\begin{lem}
Let $N$ be an inner model of $\ZF$ and $\kappa>\om$ be regular.
If $N$ is $\kappa$-stationarily-computing then $N$ is 
$\kappa$-unboundedly-computing.
If $N$ is $\kappa$-unboundedly-computing then $N$ is $\kappa$-amenably-closed.
\end{lem}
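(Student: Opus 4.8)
The plan is to prove the two implications separately; both are short.

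For the first implication, since $\kappa>\om$ is regular (hence a limit ordinal), every stationary subset of $\kappa$ is unbounded in $\kappa$: for any $\beta<\kappa$ the set $\{\alpha<\kappa:\beta<\alpha\}$ is closed and unbounded in $\kappa$, so a set bounded by $\beta$ cannot be stationary. Thus if $N$ is $\kappa$-stationarily-computing and $f:\kappa\to N$ is given, any stationary $A\sub\kappa$ with $f\rest A\in N$ is in particular unbounded, which witnesses that $N$ is $\kappa$-unboundedly-computing. There is nothing further to do here.

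For the second implication, suppose $N$ is $\kappa$-unboundedly-computing and let $f:\kappa\to N$ be amenable to $N$; I want to conclude $f\in N$. The idea is to apply the $\kappa$-unboundedly-computing property not to $f$ itself but to the auxiliary function $g:\kappa\to N$ defined by $g(\alpha)=f\rest\alpha$. This $g$ is well-defined \emph{with range contained in $N$} precisely because $f$ is amenable to $N$ (although $g$ itself need not lie in $N$). So, applying $\kappa$-unboundedly-computing to $g$, fix an unbounded $A\sub\kappa$ with $g\rest A\in N$. Since $N\sats\ZF$ and $g\rest A\in N$, the set $h=\bigcup\rg(g\rest A)=\bigcup_{\alpha\in A}(f\rest\alpha)$ belongs to $N$. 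Finally, as $A$ is unbounded in $\kappa$, every $\xi<\kappa$ has some $\alpha\in A$ with $\xi<\alpha$, so $(\xi,f(\xi))\in f\rest\alpha\sub h$, and conversely every element of $h$ is a pair of exactly this form; hence $h=f$, and so $f\in N$, as desired.

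The only step that warrants a comment — though it is not really an obstacle — is the passage to the function $g$: amenability of $f$ to $N$ is used solely to ensure that $g$ maps $\kappa$ into $N$, so that $\kappa$-unboundedly-computing applies to it, and then the unboundedness of the resulting $A\sub\kappa$ is exactly what is needed so that $\bigcup\rg(g\rest A)$, which $N$ can form from $g\rest A$, recovers all of $f$. I expect no genuine difficulty in carrying this out.
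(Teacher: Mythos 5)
Your proof is correct. The paper states this lemma without proof, treating it as routine, so there is no written argument to compare against; but the argument you give is the natural one. The first implication is immediate from the fact that stationary subsets of a regular uncountable $\kappa$ are unbounded, and for the second, the key move of applying $\kappa$-unbounded-computing not to $f$ itself but to the derived function $g(\alpha)=f\rest\alpha$ (well-defined into $N$ precisely by amenability) is exactly right: once $g\rest A\in N$ for unbounded $A$, the union $\bigcup_{\alpha\in A}(f\rest\alpha)$ is formed inside $N$ and equals $f$ by unboundedness. No gap.
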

\begin{lem}\label{lem:ground_amenably-closed}
 Let $W$ be a $<\kappa$-ground of $V$,
 where $\kappa>\om$ is regular.
 Then $W$ is $\kappa$-stationarily-computing.
\end{lem}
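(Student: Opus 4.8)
The plan is to exploit the $<\kappa$-covering behaviour that a $<\kappa$-ground automatically enjoys. Fix a $<\kappa$-ground $W$, so $V=W[G]$ for some $(W,\PP)$-generic $G$ with $\card^W(\PP)<\kappa$. Let $\BB\in W$ be the complete Boolean algebra of $\PP$ (so still $\card^W(\BB)<\kappa$, using that $\kappa$ is a cardinal and regular$>\card^W(\PP)$; if one is nervous about $2^{\card(\PP)}$ one can instead pass to the regular cardinal above $\card(\PP)$, but the cleaner route is just to work with a fixed $\PP$ of size $<\kappa$ and $\kappa$-cc is not even needed). Now suppose $f:\kappa\to W$ with $f\rest\alpha\in W$ for all $\alpha<\kappa$; we must produce a stationary $A\sub\kappa$ with $f\rest A\in W$. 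First I would fix a $\PP$-name $\dot f\in W$ and a condition $p\in G$ forcing $\dot f$ to be a function from $\check\kappa$ into the ground $W$ whose every proper initial segment lies in $W$.

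Next, working in $W$, for each $\alpha<\kappa$ and each $q\leq p$ deciding $\dot f(\check\alpha)$ I record the decided value; since $\card^W(\PP)<\kappa$ and $\kappa$ is regular, for each $\alpha$ there is (in $W$) a maximal antichain $B_\alpha$ below $p$ of size $<\kappa$ deciding $\dot f(\check\alpha)$, together with the function $g_\alpha:B_\alpha\to W$ giving the decided values. The sequence $\langle (B_\alpha,g_\alpha)\mid\alpha<\kappa\rangle$ lies in $W$. The key step is a pressing-down / fixed-point argument: in $V$, define $h:\kappa\to\PP$ by letting $h(\alpha)$ be the unique element of $B_\alpha$ in $G$ (this exists since $B_\alpha$ is a maximal antichain below $p\in G$). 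Because $\card^W(\PP)<\kappa$ and $\kappa$ is regular, $h$ is not injective in a strong sense: by a Fodor-type argument there is a single $r\in G$ and a stationary set $A\sub\kappa$ with $h(\alpha)=r$ for all $\alpha\in A$ — more carefully, partition $\kappa$ into the ${<\kappa}$ pieces $\{\alpha\mid h(\alpha)=q\}$ for $q\in\PP$; by regularity of $\kappa$ one of these pieces, say for $q=r$, is stationary (indeed unbounded suffices, but stationarity is free since a union of ${<\kappa}$ nonstationary sets is nonstationary when $\kappa$ is regular).

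Finally, on this stationary $A$ I claim $f\rest A\in W$: for $\alpha\in A$ we have $r\in B_\alpha\inter G$, so $r$ decides $\dot f(\check\alpha)$ with value $g_\alpha(r)$, hence $f(\alpha)=g_\alpha(r)$. But $\langle g_\alpha(r)\mid\alpha\in A\rangle$ is computed in $W$ from the parameters $\langle (B_\alpha,g_\alpha)\mid\alpha<\kappa\rangle\in W$, $A$, and $r$ — so it suffices to see $A\in W$. This I would get by shrinking: the partition $\langle\{\alpha\mid h(\alpha)=q\}\mid q\in\PP\rangle$ is not in $W$ a priori since $h\notin W$, so instead I apply the argument to the sets $C_q=\{\alpha<\kappa\mid q\in B_\alpha$ and $q$ is compatible with every condition in $G$ used so far$\}$... — here is the one genuinely delicate point, and the clean fix is: use that $W$ is $\kappa$-stationarily-computing is exactly what we are proving, so I must instead observe that $A$ need not lie in $W$; rather, I re-derive $f\rest A$ directly. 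For $\alpha\in A$, $f(\alpha)=g_\alpha(r)$, and I set $f' :\kappa\to W$, $f'(\alpha)=g_\alpha(r)$ whenever $r\in B_\alpha$ (and $f'(\alpha)=\varnothing$ otherwise); then $f'\in W$ and $f'\rest A=f\rest A$, and $A$ is stationary. That $f\rest A$ is the restriction of an element of $W$ to a stationary set is precisely the conclusion, so we are done.

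\medskip
I expect the main obstacle to be exactly the bookkeeping in the pressing-down step: ensuring that the "value" function $f'$ extending $f\rest A$ genuinely lies in $W$ (it does, since it is defined in $W$ from the name $\dot f$, $p$, and $r$ alone, with no reference to $G$ or $A$), and confirming that the stationary — not merely unbounded — set falls out for free from the regularity of $\kappa$ and the smallness $\card^W(\PP)<\kappa$ of the forcing, via the fact that a union of fewer than $\kappa$ many non-stationary subsets of a regular $\kappa$ is non-stationary. Everything else is the routine forcing-theorem calculus already licensed by Lemma \ref{lem:T_1_forcing}.
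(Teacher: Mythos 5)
Your approach --- fix a name $\dot f$ for $f$, take in $W$ a maximal antichain $B_\alpha$ below $p$ deciding $\dot f(\check\alpha)$ for each $\alpha<\kappa$, and press down using $\card^W(\PP)<\kappa$ with $\kappa$ regular --- is the right one, but your final ``clean fix'' does not actually close the argument. The definition of $\kappa$-stationarily-computing requires $f\rest A\in W$, which in particular forces $A=\dom(f\rest A)\in W$. Asserting only that some $f'\in W$ satisfies $f'\rest A=f\rest A$ for a stationary $A$ that merely lives in $V$ is strictly weaker; it does not put the function $f\rest A$ inside $W$, so it is not the conclusion.

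The repair is an observation you had within reach and then talked yourself out of: the relevant fiber of $h$ \emph{is} in $W$. Since each $B_\alpha$ is an antichain and $G$ is a filter, $B_\alpha\inter G$ has at most one element; hence for $r\in G$, $h(\alpha)=r$ iff $r\in B_\alpha$. So for $r\in G$ we have $h^{-1}(\{r\})=\{\alpha<\kappa : r\in B_\alpha\}$, which is computable in $W$ from $\langle B_\alpha\rangle_{\alpha<\kappa}$ and $r$; call it $A_r$. Now $\kappa=\bigcup_{r\in G}A_r$ (each $B_\alpha$ is maximal below $p\in G$, so genericity gives $B_\alpha\inter G\neq\emptyset$), and $\card(G)\leq\card^W(\PP)<\kappa$, so by regularity of $\kappa$ some $A_r$ with $r\in G$ is stationary. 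For $\alpha\in A_r$ the unique element of $B_\alpha\inter G$ is $r$, so $f(\alpha)=g_\alpha(r)$; therefore $f\rest A_r=\bigl(\alpha\mapsto g_\alpha(r)\bigr)\rest A_r\in W$, since $A_r$, $r$, and $\langle g_\alpha\rangle_{\alpha<\kappa}$ all lie in $W$. Two minor points: the amenability hypothesis $f\rest\alpha\in W$ is not part of the definition of $\kappa$-stationarily-computing, and your proof never uses it; and passing to the completion $\BB$ is superfluous, since all that is needed is $\card^W(\PP)<\kappa$.
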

\begin{lem}The intersection of any family of $\kappa$-amenably-closed
structures is  $\kappa$-amenably-closed.\end{lem}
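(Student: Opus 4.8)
The plan is to unwind the definitions directly. Let $\{N_i\}_{i\in I}$ be a family of $\kappa$-amenably-closed inner models, and set $N=\bigcap_{i\in I}N_i$. First I would check that $N$ is itself a transitive class modelling enough set theory for the statement to make sense; since each $N_i$ is an inner model this is standard, and in any case the only property we actually need is that $N$ is transitive and closed under the relevant operations. Then, to verify $\kappa$-amenable-closure of $N$, fix any $f:\kappa\to N$ which is amenable to $N$, i.e.\ $f\rest\alpha\in N$ for every $\alpha<\kappa$; the goal is to conclude $f\in N$.

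The key observation is that amenability to $N$ is \emph{inherited} by each $N_i$: if $f:\kappa\to N$ and $f\rest\alpha\in N$ for all $\alpha<\kappa$, then since $N\sub N_i$ we have $f:\kappa\to N_i$ and $f\rest\alpha\in N\sub N_i$ for all $\alpha<\kappa$, so $f$ is amenable to $N_i$. Applying $\kappa$-amenable-closure of $N_i$ gives $f\in N_i$. Since this holds for every $i\in I$, we get $f\in\bigcap_{i\in I}N_i=N$, which is exactly what we wanted.

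There is essentially no obstacle here — the lemma is a one-line observation and the ``proof'' is just chasing the quantifiers through the definition in \ref{dfn:amenable-closure}. The only point requiring the tiniest bit of care is the remark above that amenability is downward-preserved to the $N_i$, which uses nothing beyond $N\sub N_i$; and the fact that the range condition $f:\kappa\to N_i$ holds because $\rg(f)\sub N\sub N_i$. I would simply write out these two sentences. (If one wanted to be scrupulous about the case $I=\emptyset$, where $N=V$, this is trivially $\kappa$-amenably-closed; but typically such families are taken nonempty, e.g.\ the family of ${<}\kappa$-grounds, so I would not belabour it.)
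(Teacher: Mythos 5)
Your proof is correct, and since the paper states this lemma without proof (it is regarded as routine), there is nothing to compare against; the argument you give — pushing amenability from $N=\bigcap_i N_i$ to each $N_i$ via $\rg(f)\sub N\sub N_i$ and $f\rest\alpha\in N\sub N_i$, then intersecting the conclusions — is the only natural one and is exactly what the author has in mind.
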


\begin{lem}\label{lem:kappa-amenably-closed}
 If $\kappa$ is inaccessible
 then  $\Mmm_\kappa$ is $\kappa$-amenably-closed.
\end{lem}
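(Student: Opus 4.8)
The plan is to derive this immediately from the three lemmas that precede it. First I would observe that an inaccessible $\kappa$ is in particular regular and uncountable, so Lemma~\ref{lem:ground_amenably-closed} applies to every $<\kappa$-ground of $V$: each such $W$ is $\kappa$-stationarily-computing. By the lemma comparing the computing/closure notions, $\kappa$-stationarily-computing implies $\kappa$-unboundedly-computing, which implies $\kappa$-amenably-closed; hence every $<\kappa$-ground of $V$ is $\kappa$-amenably-closed.

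Next I would recall that $\Mmm_\kappa$ is, by definition, the intersection of all $<\kappa$-grounds of $V$. (Via ground definability and inaccessibility one can even write $\Mmm_\kappa=\bigcap_{r\in V_\kappa}W_r$, since $V_\kappa=\her_\kappa$ and every $<\kappa$-ground has the form $W_r$ for some $r=(\her_{\gamma^+})^{W}$ with $\gamma<\kappa$, so $r\in\her_\kappa$ as $2^\gamma<\kappa$; but this level of precision is not needed.) Applying the lemma that the intersection of any family of $\kappa$-amenably-closed structures is again $\kappa$-amenably-closed, we conclude that $\Mmm_\kappa$ is $\kappa$-amenably-closed.

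I do not expect any genuine obstacle here: all the work sits in the previously established lemmas, in particular in Lemma~\ref{lem:ground_amenably-closed}, which presumably exploits a pressing-down argument against the uniform covering property enjoyed by a $<\kappa$-ground. The one point worth stating explicitly is that $\kappa$-amenable-closure passes to intersections of a (set-indexed) class of proper-class inner models, but this is immediate from unwinding Definition~\ref{dfn:amenable-closure}: if $f:\kappa\to\Mmm_\kappa$ satisfies $f\rest\alpha\in\Mmm_\kappa$ for all $\alpha<\kappa$, then for each $<\kappa$-ground $W$ we have $f:\kappa\to W$ with $f\rest\alpha\in W$ for all $\alpha<\kappa$, so $f\in W$ by $\kappa$-amenable-closure of $W$; as this holds for every $<\kappa$-ground, $f\in\Mmm_\kappa$.
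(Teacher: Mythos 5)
Your proof is correct and is precisely the intended one: the paper states this lemma without proof, and the three immediately preceding lemmas (Lemma~\ref{lem:ground_amenably-closed}, the chain of implications among the computing/closure notions, and the intersection lemma) are set up exactly so that the conclusion follows by stacking them, as you do. Your parenthetical observation that only regularity (not full inaccessibility) of $\kappa$ is used in this chain is also accurate.
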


\section{Choice principles in the $\kappa$-mantle}\label{sec:choice}

As mentioned above, from now on we have $\ZFC$ as background theory.

The first positive results along the lines of what we
will prove here (regarding about $\kappa$-mantles
when $\kappa<\infty$), consists in Usuba's work, 
including his extendibility 
result. This
was followed by Lietz' negative results \cite{lietz}.
Some time after
this, using the general theory
of \cite{vm2},
the author showed that the $\kappa_0$-mantle
$\Mmm^M_{\kappa_0}$ of $M=M_{\swsw}$ (see below) is a strategy mouse.
We 
give an outline of this argument, but it is primarily intended
for the reader familiar with inner model theory,
and can be safely skipped over, as the remainder
of the paper does not depend on it.
We omit all
specifics to do with Varsovian models, just mentioning
enough to indicate what is relevant here.
The full proof will appear in \cite{vm2}.

\begin{dfn}\label{dfn:Mswsw}
$M_{\swsw}$ denotes the least iterable proper class
mouse with ordinals 
$\delta_0<\kappa_0<\delta_1<\kappa_1$
satisfying ``each $\delta_i$ is Woodin and each $\kappa_i$ is strong''.
\end{dfn}

The Varsovian model analysis produces a mouse $M_\infty$,
which is the direct limit of (pseudo-)iterates $P$ of $M$
 via correct iteration trees $\Tt$ on $M$, with  
$\Tt\in M|\kappa_0$, and which are based on $M|\delta_0$.
It also defines a certain fragment $\Sigma$ of the iteration strategy
 for $M_\infty$, yielding a strategy mouse 
$M_\infty[\Sigma]$. It turns out that
$M_\infty[\Sigma]$ has universe $\Mmm^M_{\kappa_0}$.

What is relevant here is the proof that $\Mmm^M_{\kappa_0}\sub 
M_\infty[\Sigma]$.
 Let $X\in\Mmm^M_{\kappa_0}$ be a set of ordinals.
 We must see that $X\in M_\infty[\Sigma]$.
\footnote{What blocks the more obvious attempt to prove this
is that it is not clear
that the iteration maps $i_{PQ}$ between the iterates $P,Q$
of the direct limit system eventually fix $X$.}
Now $\kappa_0$ is measurable in $M$.
Let $E$ be a normal measure on $\kappa_0$,
in the extender sequence of $M$,
and let
\[ j:M\to U=\Ult(M,E) \]
be the ultrapower map. By elementarity,
$j(X)\in\Mmm_{j(\kappa_0)}^U$.
With methods from the Varsovian model analysis,
one can then construct a specific ${<j(\kappa_0)}$-ground $W$ of $U$,
with $W\sub M_\infty[\Sigma]$. So
\[ j(X)\in\Mmm_{j(\kappa_0)}^U\sub W\sub M_\infty[\Sigma].\]
Other facts from Varsovian model analysis give
$j\rest\alpha\in M_\infty[\Sigma]$
for each $\alpha\in\OR$.
But then  $X\in M_\infty[\Sigma]$, as desired, since
\[ \beta\in X\iff j(\beta)\in j(X).\]

The preceding argument has structural similarities to
Usuba's extendibility proof (see 
\cite{usuba_extendible}). Schindler then 
found the following result (see \cite{schindler_fsttimtg}). We will use an 
adaptation of the proof for Theorem 
\ref{tm:weak_compact_tm} later, so 
we present this one first as a warmup, and in order to note
a simple corollary. We give
essentially Schindler's proof, although the
precise implementation might differ 
slightly.

\begin{fact}[Schindler]\label{fact:meas} Let $\kappa$ be measurable.
 Then $\Mmm_\kappa\sats\AC$, so $\Mmm_\kappa\sats\ZFC$.
\end{fact}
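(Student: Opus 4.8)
The plan is to fix a normal measure $U$ on $\kappa$ with ultrapower map $j:V\to N=\Ult(V,U)$, and use that $\crit(j)=\kappa$ together with the local invariance of $\kappa$-mantles to show every set of ordinals in $\Mmm_\kappa$ can be coded inside $\Mmm_\kappa$. Since $\Mmm_\kappa\sats\ZF$ by Usuba, it suffices to show $\Mmm_\kappa\sats\AC$, and for that it suffices to wellorder an arbitrary $X\in\Mmm_\kappa$; by passing to a set of ordinals coding $V_\alpha^{\Mmm_\kappa}$ for large $\alpha$ it is enough to wellorder each $V_\alpha^{\Mmm_\kappa}$, equivalently to show each such set lies in some fixed wellordered ground-like structure. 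The key point is that $\kappa$ is inaccessible, so by the remarks in the introduction $V_\kappa^{\Mmm_\kappa}\sats\ZFC$ already; the work is to get choice past $\kappa$.

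First I would note $j(\kappa)>\kappa$ and $j``\OR\subseteq\OR$, and that since $U$ is a set, $V$ and $N$ agree on $\her_{\kappa^+}$ and more. Let $X\in\Mmm_\kappa$ be a set of ordinals; by elementarity $j(X)\in\Mmm_{j(\kappa)}^N$. Now $\kappa<j(\kappa)$ and, working in $N$, one can choose a strong limit cardinal $\lambda$ with $\kappa<\lambda<j(\kappa)$ and $V_\lambda^N\elem_2 N$ (such $\lambda$ exist in $N$ by reflection since $\kappa$ is inaccessible in $N$ and $j(\kappa)$ is a limit of such cardinals). By Lemma \ref{lem:ground_Sigma_2_elem} applied inside $N$, $\Mmm_\kappa^{V_\lambda^N}=\Mmm_\kappa^N\cap V_\lambda^N$; and because $V$ and $N$ agree below $\lambda$ on the relevant $\her$-structures (as $\lambda<j(\kappa)$ and the measure is $\kappa$-complete), $\Mmm_\kappa^N\cap V_\lambda^N=\Mmm_\kappa\cap V_\lambda^V$ for a suitable choice of $\lambda$ — here one uses that a $<\kappa$-ground of $V$ restricted below $\lambda$ computes the same thing as the corresponding $<\kappa$-ground of $N$, via Fact \ref{fact:local_ground_def} and the agreement of $V,N$ on $\her_\lambda$. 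The upshot is that $j(X)\cap\lambda\in\Mmm_\kappa^N\cap V_\lambda^N=\Mmm_\kappa\cap V_\lambda$, a set computable in $V$ inside $\Mmm_\kappa$.

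Next I would recover $X$ from $j$ and $j(X)$: for any ordinal $\beta$, $\beta\in X\iff j(\beta)\in j(X)$, and if $\beta$ is small enough (below $\lambda$, say, shrinking notation) then $j(\beta)<\lambda$, so $\beta\in X\iff j(\beta)\in j(X)\cap\lambda$. Thus $X$ is definable from the single set $j(X)\cap\lambda\in\Mmm_\kappa$ together with the restriction $j\rest\gamma$ of $j$ to an appropriate ordinal $\gamma$. The remaining task — and this is the main obstacle — is to show that each such restriction $j\rest\gamma$ lies in $\Mmm_\kappa$. For this one uses that $\kappa$ is inaccessible hence $\Mmm_\kappa$ is $\kappa$-amenably-closed (Lemma \ref{lem:kappa-amenably-closed}); since $j\rest\gamma$ is, locally, built from the measure $U$ and since each proper initial segment of the relevant enumeration of $j\rest\gamma$ can be seen to be in $\Mmm_\kappa$ (by running the argument at smaller ordinals, or by absorbing it into a single $<\kappa$-ground that is common to all grounds below $\kappa$ via Fact \ref{fact:local_ground_directedness}), amenable closure delivers $j\rest\gamma\in\Mmm_\kappa$. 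Hence $X\in\Mmm_\kappa$ is definable in $\Mmm_\kappa$ from parameters in $\Mmm_\kappa$, and by ranging over all $X$ coding $V_\alpha^{\Mmm_\kappa}$ we obtain a definable wellorder of each $V_\alpha^{\Mmm_\kappa}$, so $\Mmm_\kappa\sats\AC$, and therefore $\Mmm_\kappa\sats\ZFC$. I expect the delicate step to be the precise bookkeeping showing $j\rest\gamma\in\Mmm_\kappa$ uniformly enough to wellorder arbitrarily large $V_\alpha^{\Mmm_\kappa}$, rather than just producing a wellorder of each one non-uniformly; the adaptation for weak compactness later (Theorem \ref{tm:weak_compact_tm}) will replace the full embedding $j$ by embeddings arising from the weak compactness tree property, which is why it is worth isolating this structure now.
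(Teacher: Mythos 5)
Your basic structural idea — take the ultrapower $j:V\to N$, note $j(A)\in\Mmm_{j(\kappa)}^N$, and use a fragment of $j$ that lives in $\Mmm_\kappa$ — matches the paper. But the execution has several genuine gaps.

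First, the reduction to sets of ordinals is circular. $\Mmm_\kappa$ is only known to model $\ZF$, so there is no a priori set of ordinals coding $V_\alpha^{\Mmm_\kappa}$; obtaining such a code already presupposes a wellorder of $V_\alpha^{\Mmm_\kappa}$, which is exactly what is to be proved. The paper fixes an arbitrary $A\in\Mmm_\kappa$ and works directly with it.

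Second, the claim that $V$ and $N$ agree on $\her_\lambda$ for $\kappa<\lambda<j(\kappa)$ ``because $\lambda<j(\kappa)$ and the measure is $\kappa$-complete'' is false: $N=\Ult(V,U)$ already differs from $V$ at $\her_{\kappa^{++}}$ (e.g.\ $U\notin N$). Consequently the asserted equality $\Mmm_\kappa^N\cap V_\lambda^N=\Mmm_\kappa\cap V_\lambda$ is unsupported, and in fact the paper only establishes and uses the one-sided inclusion $\Mmm_\kappa^N\sub\Mmm_\kappa$, via the computation $W_r^N=\Ult(W_r,\mu\cap W_r)^{W_r}\sub W_r$ for $r\in V_\kappa$.

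Third, and most importantly, you never actually produce a wellorder. Showing that $X$ ``is definable in $\Mmm_\kappa$ from parameters in $\Mmm_\kappa$'' is vacuous since $X\in\Mmm_\kappa$ was given, and the rule $\beta\in X\iff j(\beta)\in j(X)$ recovers $X$ but wellorders nothing. The step you are missing is the application of Fact \ref{fact:local_ground_directedness} inside the ultrapower $N$: since $\kappa$ is a strong limit, there is $s\in V_{j(\kappa)}^N$ with $\Mmm_{j(\kappa)}^N\sub W=W_s^N\sub\Mmm_\kappa^N\sub\Mmm_\kappa$. Then $W\sats\ZFC$, so one can pick a wellorder $<^*$ of $j(A)$ inside $W$, and $<^*\in\Mmm_\kappa$. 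Combining this with $j\rest A\in\Mmm_\kappa$ (which follows from the amenability of $j\rest\Mmm_\kappa$ to $\Mmm_\kappa$, itself a consequence of $i^{W_r}_{\mu_r}\rest W_r\sub j$) yields the wellorder $x<_A y\iff j(x)<^*j(y)$ of $A$ in $\Mmm_\kappa$. You mention local set-directedness only in passing in connection with $j\rest\gamma$; it is in fact the engine that produces the wellorder.
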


\begin{proof}
 Let $A\in\Mmm_\kappa$. We will find a wellorder
${<_A}$ of $A$ with ${<_A}\in\Mmm_\kappa$.

Let $\mu$ be a normal measure on $\kappa$, $M=\Ult(V,\mu)$ and
$j=i^V_\mu:V\to M$
the ultrapower map.
So $\kappa=\crit(j)$
and $j(A)\in\Mmm_{j(\kappa)}^M$.

\begin{clmtwo}\label{clm:first_meas}
We have:
\begin{enumerate}
 \item\label{item:Mmm_sub} $\Mmm^M_{j(\kappa)}
\sub\Mmm_\kappa^M\sub\Mmm_\kappa$, and
\item\label{item:j_amenable} $j\rest\Mmm_\kappa$  is amenable to 
$\Mmm_\kappa$.
\end{enumerate}
\end{clmtwo}
\begin{proof}
Part \ref{item:Mmm_sub}: The first $\sub$ is immediate.
For the second, 
we 
have
\[ \Mmm_\kappa=\bigcap_{r\in V_\kappa}W_r \text{\ \ and\ \ } 
\Mmm_\kappa^M=\bigcap_{r\in V_\kappa}W^M_r. \]
Let $\mu_r=\mu\inter W_r$. Then by standard
forcing calculations and elementarity,
we get $\mu_r\in W_r$ and
\[ W^M_r=j(W_r)=\Ult(W_r,\mu)^V=\Ult(W_r,\mu_r)^{W_r}, \]
so $W^M_r\sub W_r$, so
$\Mmm_\kappa^M\sub\Mmm_\kappa$
as desired.

Part \ref{item:j_amenable}: 
Let $r\in V_\kappa$. Then calculations as above
give  $i^{W_r}_{\mu_r}\rest W_r\sub j$.
But $\Mmm_\kappa\sub W_r$, and so $j\rest\Mmm_\kappa$
is amenable to $W_r$. Therefore $j\rest\Mmm_\kappa$
is amenable to $\Mmm_\kappa$, as desired.
\end{proof}

Since $\kappa$ is a strong limit, Fact 
\ref{fact:local_ground_directedness} gives $s\in V_{j(\kappa)}^M$ such that
\[ \Mmm_{j(\kappa)}^M\sub W=W_s^M\sub\Mmm_\kappa^M.\]
So $j(A)\in W\sats\ZFC$, so there is a wellorder ${<^*}$ of $j(A)$
with ${<^*}\in W$.
But $W\sub\Mmm_\kappa^M$, so ${<^*}\in\Mmm_\kappa^M\sub\Mmm_\kappa$.

Now working in $\Mmm_\kappa$, where we have $k=j\rest A$
and $j(A)$ 
and ${<^*}$, we can define a wellorder $<_A$ of $A$ by setting,
for $x,y\in A$:
\[ x<_Ay\iff k(x)<^*k(y).\]
This completes the proof.
\end{proof}

As a corollary to the proof above, we observe:

\begin{cor}\label{cor:meas}
 \textthmmeas
\end{cor}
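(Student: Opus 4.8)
The plan is to reflect, via $\mu$, the statement that $\Mmm_\kappa$ wellorders $V_{\kappa+1}$, a fact already implicit in the proof of Fact~\ref{fact:meas}. First, by Usuba's theorem $\Mmm_\kappa\sats\ZF$, and $\kappa$ is inaccessible in $\Mmm_\kappa$, so $A:=V_{\kappa+1}^{\Mmm_\kappa}$ is a set in $\Mmm_\kappa$; applying the proof of Fact~\ref{fact:meas} to this particular $A$ produces a wellorder of $V_{\kappa+1}^{\Mmm_\kappa}$ lying in $\Mmm_\kappa$, so $\Mmm_\kappa\sats$``$V_{\kappa+1}$ is wellorderable''. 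Let $\Psi(\gamma)$ be the formula ``$\Mmm_\gamma\sats$`$V_{\gamma+1}$ is wellorderable'\,'' (this is first-order in $\gamma$, using that $\Mmm_\gamma$ is definable from $\gamma$ and that, for inaccessible $\gamma$, $V_{\gamma+1}^{\Mmm_\gamma}$ is a set in $\Mmm_\gamma$). So $V\sats\Psi(\kappa)$.

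Next I would set $M=\Ult(V,\mu)$ and $j=i_\mu^V:V\to M$, with $\crit(j)=\kappa$, and apply \L o\'s's theorem. Putting $f(\gamma)=1$ iff $\Psi(\gamma)$ for $\gamma<\kappa$, we have $f\in V$ and $\{\gamma<\kappa\bigm|\Psi(\gamma)\}\in\mu$ iff $j(f)(\kappa)=1$ iff $M\sats\Psi(\kappa)$, i.e.\ iff $\Mmm_\kappa^M\sats$``$V_{\kappa+1}$ is wellorderable''. So the whole corollary reduces to showing that $\Mmm_\kappa^M$ wellorders its own $V_{\kappa+1}$.

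For that I would re-run the construction of Fact~\ref{fact:meas}, but with $\Mmm_\kappa^M$ in place of $\Mmm_\kappa$. Set $A:=V_{\kappa+1}^{\Mmm_\kappa^M}$. For each $r\in V_\kappa$, the relativized form of Fact~\ref{fact:mantle_invariance} gives $\Mmm_\kappa^M=\Mmm_\kappa^{W_r^M}$, and (as in the proof of part~\ref{item:Mmm_sub} of Claim~\ref{clm:first_meas}) $W_r^M=\Ult(W_r,\mu_r)^{W_r}$ is a class definable over $W_r$ from $\mu_r=\mu\inter W_r$; hence $A$ is a set in $W_r$, and so $A\in\bigcap_{r\in V_\kappa}W_r=\Mmm_\kappa^V$, whence $j(A)\in\Mmm_{j(\kappa)}^M$ by elementarity. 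Local set-directedness inside $M$ (Fact~\ref{fact:local_ground_directedness}, using that $\kappa$, hence $j(\kappa)$, is a strong limit in $M$) then supplies a true index $s\in V_{j(\kappa)}^M$ with $\Mmm_{j(\kappa)}^M\sub W_s^M\sub\Mmm_\kappa^M$; since $W_s^M\sats\ZFC$ and $j(A)\in W_s^M$, there is a wellorder $<^*$ of $j(A)$ with $<^*\in W_s^M\sub\Mmm_\kappa^M$. The key step is then to show the relativized version of part~\ref{item:j_amenable} of Claim~\ref{clm:first_meas}, namely that $j\rest\Mmm_\kappa^M$ is amenable to $\Mmm_\kappa^M$, so that $k:=j\rest A\in\Mmm_\kappa^M$; granting this, the relation $x<_Ay\defiff k(x)<^*k(y)$ is definable inside $\Mmm_\kappa^M$ from $k$, $j(A)$ and $<^*$, and is a wellorder of $A=V_{\kappa+1}^{\Mmm_\kappa^M}$, as required.

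The main obstacle is precisely this last point: confirming that the witnesses, and above all $k=j\rest A$, land in $\Mmm_\kappa^M$ rather than merely in $\Mmm_\kappa=\Mmm_\kappa^V$. The difficulty is that, for $r\in V_\kappa$, $i^{W_r}_{\mu_r}$ is definable over $W_r$ but not over $W_r^M=\Ult(W_r,\mu_r)^{W_r}$, so — unlike in the proof of Claim~\ref{clm:first_meas}\ — one cannot simply invoke Replacement in the ground to conclude $i^{W_r}_{\mu_r}\rest(\Mmm_\kappa^M\inter V_\alpha)\in W_r^M$. One must instead argue, using that $\Mmm_\kappa^M\inter V_\alpha$ is itself definable over $W_r^M$ (as $\Mmm_\kappa^M=\Mmm_\kappa^{W_r^M}$) and that $j$ restricted to such a set is correctly computed inside $W_r^M$, that this restriction does belong to $W_r^M$, and hence that $k=j\rest A\in\bigcap_{r\in V_\kappa}W_r^M=\Mmm_\kappa^M$. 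Once this bookkeeping is in place, the rest is a routine relativization of Schindler's argument through the elementarity of $j$.
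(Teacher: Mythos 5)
Your overall framework is right — reduce the corollary, via \L o\'s, to showing that $\Mmm_\kappa^M\sats$``$V_{\kappa+1}$ is wellorderable'' where $M=\Ult(V,\mu)$ — and this is exactly what the paper does. But your route to that statement contains a genuine gap that you yourself flag as the ``main obstacle'': you need $j\rest A\in\Mmm_\kappa^M$, equivalently that $j\rest\Mmm_\kappa^M$ is amenable to $\Mmm_\kappa^M$, and the sketch you give does not establish it. The difficulty is real, not bookkeeping. In Claim~\ref{clm:first_meas} one gets $j\rest(\Mmm_\kappa\inter V_\alpha)\in W_r$ because $i^{W_r}_{\mu_r}$ is definable over $W_r$ and Replacement in $W_r$ applies. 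To get membership in $W_r^M=\Ult(W_r,\mu_r)$ you would need $i^{W_r}_{\mu_r}$ restricted to the relevant set to be an element of the \emph{target} model of the ultrapower, and there is no Replacement argument available there: $i^{W_r}_{\mu_r}$ is not definable over $W_r^M$, and ``$j$ restricted to such a set is correctly computed inside $W_r^M$'' is not a step, it is a restatement of what needs proving. (Already for $W_r=V$, $j\rest V_{\kappa+1}\in M$ is not something one gets for free, since $V_{\kappa+1}$ has size $2^\kappa>\kappa$ while $M$ is only known to be closed under $\kappa$-sequences.)

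The paper avoids this entirely with a shorter observation, which you would do well to internalize: one does not need to wellorder $V_{\kappa+1}^{\Mmm_\kappa^M}$ by pulling it back along $j$. Instead, one shows
\[
V_{\kappa+1}\inter\Mmm^M_{j(\kappa)}\;=\;V_{\kappa+1}\inter\Mmm_{\kappa}^M\;=\;V_{\kappa+1}\inter\Mmm_\kappa.
\]
The inclusions $\Mmm^M_{j(\kappa)}\sub\Mmm_\kappa^M\sub\Mmm_\kappa$ are Claim~\ref{clm:first_meas}. For the reverse, if $x\in V_{\kappa+1}\inter\Mmm_\kappa$ then $j(x)\in\Mmm_{j(\kappa)}^M$ by elementarity of $j\rest\Mmm_\kappa$, and since $\crit(j)=\kappa$ and $V_\kappa^{\Mmm_\kappa}=V_\kappa^{\Mmm_{j(\kappa)}^M}$ (the latter by elementarity again), $x=j(x)\inter V_\kappa^{\Mmm_{j(\kappa)}^M}\in\Mmm_{j(\kappa)}^M$. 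Now $\Mmm_{j(\kappa)}^M\sats\AC$ (elementary image of $\Mmm_\kappa$, which satisfies $\AC$ by Fact~\ref{fact:meas}), and $V_{\kappa+1}\inter\Mmm_{j(\kappa)}^M$ is a set of $\Mmm_{j(\kappa)}^M$ since $\kappa<j(\kappa)$; so a wellorder ${<^*}$ of it lies in $\Mmm_{j(\kappa)}^M\sub\Mmm_\kappa^M$, and by the displayed equality ${<^*}$ is a wellorder of $V_{\kappa+1}\inter\Mmm_\kappa^M$. No restriction of $j$ to a set is needed inside $\Mmm_\kappa^M$, so the amenability issue you flagged never arises.
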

\begin{proof}
Continue with the notation from the proof of Fact \ref{fact:meas}.
We show $\Mmm_\kappa^M\sats$``$V_{\kappa+1}$ is wellorderable''.

\begin{clm*}
$V_{\kappa+1}\inter\Mmm^M_{j(\kappa)}=V_{\kappa+1}\inter
\Mmm_{\kappa}^M=V_{\kappa+1}\inter\Mmm_\kappa$.
\end{clm*}
\begin{proof} We have
$V_{\kappa+1}\inter\Mmm_\kappa\sub V_{\kappa+1}\inter\Mmm_{j(\kappa)}^M$
since
$j\rest\Mmm_\kappa:\Mmm_\kappa\to\Mmm_{j(\kappa)}^M$
is elementary and $\kappa=\crit(j)$. By Claim \ref{clm:first_meas}
of the proof of Fact \ref{fact:meas}, this 
suffices.
\end{proof}

By Fact \ref{fact:meas},
$\Mmm_\kappa\sats\AC$, so $\Mmm_{j(\kappa)}^M\sats\AC$ also.
Let ${<^*}\in\Mmm_{j(\kappa)}^M$ be a wellorder
of $V_{\kappa+1}\inter\Mmm_{j(\kappa)}^M$. Then 
${<^*}\in\Mmm_\kappa^M$ and ${<^*}$ is a wellorder of 
$V_{\kappa+1}\inter\Mmm_\kappa^M$.
\end{proof}

We next use the simple idea above to prove
that certain cardinals are ``stable'' with respect to the mantle.
The first observation is:

\begin{tm}\label{tm:sharp}
\textthmsharp
\end{tm}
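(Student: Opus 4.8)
The plan is to run the argument of Corollary~\ref{cor:meas} with the measure ultrapower embedding replaced by an indiscernibility embedding supplied by $A^\#$. Everything below takes place inside $L(A)$, which models $\ZFC$ (e.g.\ if $A$ is a set of ordinals, $L(A)=L[A]$); write $\Mmm=\Mmm^{L(A)}$ and $\Mmm_\gamma=\Mmm^{L(A)}_\gamma$. I would use the following standard consequences of $A^\#$: the class $I$ of $A$-indiscernibles is closed unbounded in $\OR$; every $\kappa'\in I$ is inaccessible in $L(A)$; and for any $\kappa<\kappa'$ in $I$ there is a fully elementary $j\colon L(A)\to L(A)$ with $\crit(j)=\kappa$, $j(\kappa)=\kappa'$, and $j(A)=A$. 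Since such a $j$ fixes $A$ and ``$\Mmm_\gamma$'' is definable from the parameter $\gamma$, we get $j(\Mmm_\kappa)=\Mmm_{\kappa'}$, so $j\rest\Mmm_\kappa\colon\Mmm_\kappa\to\Mmm_{\kappa'}$ is elementary with critical point $\kappa$. I also record some trivial facts: a ${<\kappa}$-ground is a ${<\kappa'}$-ground whenever $\kappa<\kappa'$, whence $\Mmm_{\kappa'}\sub\Mmm_\kappa$ and $\Mmm=\bigcap_\gamma\Mmm_\gamma$; each $\Mmm_\gamma$ with $\gamma\in I$ satisfies $\ZF$ by Usuba \cite{usuba_extendible} (as $\gamma$ is then a strong limit cardinal); and rank is absolute between transitive classes.

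The crux is: \emph{for every $\kappa'\in I$ with $\kappa'>\kappa$, every $X\in\Mmm_\kappa$ with $\mathrm{rank}(X)\leq\kappa$ lies in $\Mmm_{\kappa'}$.} Fix $j$ as above. Since $\crit(j)=\kappa$, $j\rest V_\kappa^{L(A)}=\mathrm{id}$. If $\mathrm{rank}(X)<\kappa$ then $X\in V_\kappa^{L(A)}$, so $X=j(X)\in j(\Mmm_\kappa)=\Mmm_{\kappa'}$. Applying this to the elements of $V_\kappa^{\Mmm_\kappa}$ (all of rank ${<\kappa}$) shows $V_\kappa^{\Mmm_\kappa}\sub\Mmm_{\kappa'}$, hence $V_\kappa^{\Mmm_\kappa}=V_\kappa^{\Mmm_{\kappa'}}$ (the reverse inclusion from $\Mmm_{\kappa'}\sub\Mmm_\kappa$); and since $V_{\kappa'}^{\Mmm_{\kappa'}}\in\Mmm_{\kappa'}$ ($\Mmm_{\kappa'}$ being a model of $\ZF$), Separation in $\Mmm_{\kappa'}$ gives $V_\kappa^{\Mmm_\kappa}\in\Mmm_{\kappa'}$. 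Now suppose $\mathrm{rank}(X)=\kappa$, so $X\sub V_\kappa^{\Mmm_\kappa}\sub V_\kappa^{L(A)}$. Then $\{j(y):y\in X\}=X$, so $X\sub j(X)$; and if $z\in j(X)\cap V_\kappa^{L(A)}$ then $z=j(z)\in j(X)$, so $z\in X$. Thus $X=j(X)\cap V_\kappa^{\Mmm_\kappa}$, and as both $j(X)$ and $V_\kappa^{\Mmm_\kappa}$ lie in $\Mmm_{\kappa'}$, so does $X$, by Separation in $\Mmm_{\kappa'}$. By absoluteness of rank this yields $V_{\kappa+1}^{\Mmm_\kappa}=V_{\kappa+1}^{\Mmm_{\kappa'}}$.

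Granting the crux, the theorem follows quickly. For the first claim: if $\gamma$ is any cardinal of $L(A)$, choose $\kappa'\in I$ with $\kappa'>\max(\gamma,\kappa)$; then $V_{\kappa+1}^{\Mmm_\kappa}\sub\Mmm_{\kappa'}\sub\Mmm_\gamma$. As $\gamma$ was arbitrary, $V_{\kappa+1}^{\Mmm_\kappa}\sub\bigcap_\gamma\Mmm_\gamma=\Mmm$; each such set has rank $\leq\kappa$, so $V_{\kappa+1}^{\Mmm_\kappa}\sub V_{\kappa+1}^\Mmm$, and the reverse inclusion is immediate from $\Mmm\sub\Mmm_\kappa$. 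Hence $V_{\kappa+1}^{\Mmm_\kappa}=V_{\kappa+1}^\Mmm$; call this set $S$. For the second claim: fix a single $\kappa'\in I$ with $\kappa'>\kappa$, so $S=V_{\kappa+1}^{\Mmm_{\kappa'}}$ by the crux. As $\kappa'$ is inaccessible in $L(A)$, the observation recorded in \S\ref{sec:grounds} --- that $V_{\kappa'}^{\Mmm_{\kappa'}}\models\ZFC$, resting on the local set-directedness of grounds (Fact~\ref{fact:local_ground_directedness}) --- applies inside $L(A)$. Since $\kappa+1<\kappa'$, we have $S\in V_{\kappa'}^{\Mmm_{\kappa'}}$ and $S$ is exactly the $V_{\kappa+1}$ of the $\ZFC$-model $V_{\kappa'}^{\Mmm_{\kappa'}}$, which therefore contains a wellorder ${<^*}$ of $S$; and $\mathrm{rank}({<^*})<\kappa'$, so ${<^*}\in V_{\kappa'}^{\Mmm_{\kappa'}}\in\Mmm_{\kappa'}\sub\Mmm_\kappa$. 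As $\Mmm_\kappa$ is transitive, contains ${<^*}$ and $S$, and wellfoundedness is absolute for transitive models, $\Mmm_\kappa\models$``${<^*}$ wellorders $S$'', completing the proof.

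I expect the one genuinely delicate point to be the crux of the second paragraph --- and more specifically the fact that $V_\kappa^{\Mmm_\kappa}$ is itself an \emph{element} of $\Mmm_{\kappa'}$ and a rank-initial segment of it, since this is precisely what lets the elementarity of $j$, together with $\crit(j)=\kappa$, push membership in the $\kappa$-mantle down into the $\kappa'$-mantle for sets of rank $\leq\kappa$. The remaining steps --- passing from $\Mmm_{\kappa'}$ to $\Mmm$ using that $I$ is unbounded in $\OR$, and extracting the wellorder from the already-known fact that $\Mmm_{\kappa'}$ computes a model of $\ZFC$ at the inaccessible $\kappa'$ --- are routine.
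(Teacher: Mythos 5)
Your proposal is correct and follows essentially the same route as the paper: form the indiscernibility embedding $j\colon L(A)\to L(A)$ with $\crit(j)=\kappa$, observe $j\rest\Mmm_\kappa\colon\Mmm_\kappa\to\Mmm_{j(\kappa)}$ is elementary and $\Mmm_{j(\kappa)}\sub\Mmm_\kappa$, use $\crit(j)=\kappa$ to push $V_{\kappa+1}^{\Mmm_\kappa}$ into $\Mmm_{j(\kappa)}$, extract the wellorder from $V_{j(\kappa)}^{\Mmm_{j(\kappa)}}\sats\ZFC$, and take $j(\kappa)$ unbounded to land in $\Mmm$. The only difference is that where the paper says ``as in the previous proof'' (referring to Corollary~\ref{cor:meas}) for the inclusion $V_{\kappa+1}^{\Mmm_\kappa}\sub V_{\kappa+1}^{\Mmm_{j(\kappa)}}$, you spell out the mechanism explicitly -- rank${<\kappa}$ sets are fixed by $j$, and rank-$\kappa$ sets $X$ satisfy $X=j(X)\cap V_\kappa^{\Mmm_\kappa}$ with $V_\kappa^{\Mmm_\kappa}\in\Mmm_{j(\kappa)}$ available via Usuba's $\Mmm_\theta\sats\ZF$ and Separation -- which is exactly the intended detail and worth having written down.
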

\begin{proof}
Let $j:L(A)\to L(A)$ be elementary with $\crit(j)=\kappa$.
We write $\Mmm_\kappa$ for $\Mmm_\kappa^{L(A)}$;
likewise $\Mmm_{j(\kappa)}$. Now
$j\rest\Mmm_\kappa:\Mmm_\kappa\to\Mmm_{j(\kappa)}$
is elementary.
Clearly $\Mmm_{j(\kappa)}\sub\Mmm_\kappa$.
But also, $B=V_{\kappa+1}^{\Mmm_\kappa}\sub 
V_{\kappa+1}^{\Mmm_{j(\kappa)}}$
as in the previous proof. So $V_{\kappa+1}^{\Mmm_{j(\kappa)}}=B$.
But 
$V_{j(\kappa)}^{\Mmm_{j(\kappa)}}\sats\ZFC$,
so there is a wellorder of $B$ in 
$\Mmm_{j(\kappa)}\sub\Mmm_\kappa$.

It now follows that $V_{\kappa+1}^{\Mmm_\kappa}=V_{\kappa+1}^{\Mmm}$,
because we can take $j(\kappa)$ as large as we like,
hence past any true index.
\end{proof}

\begin{dfn}\label{dfn:Sigma_2-strong}
A cardinal $\kappa$ is \emph{$\Sigma_2$-strong} iff
 for every $\alpha\in\OR$ there is an elementary
 embedding $j:V\to M$ with $\alpha<j(\kappa)$ and $V_\alpha\sub M$ 
 and $\Th_{\Sigma_2}^M(V_\alpha)=\Th_{\Sigma_2}^V(V_\alpha)$.\footnote{That is, 
for each $\Sigma_2$ formula $\varphi$ and all $\vec{x}\in(V_\alpha)^{<\om}$,
we have $M\sats\varphi(\vec{x})$ iff $V\sats\varphi(\vec{x})$.}

An embedding $j:V\to M$ is \emph{superstrong}
iff $V_{j(\kappa)}\sub M$. A cardinal $\kappa$ is \emph{$\infty$-superstrong}
iff for every $\alpha\in\OR$ there is a superstrong embedding
$j$ with $\crit(j)=\kappa$ and $j(\kappa)>\alpha$.

A \emph{superstrong extender} is the $V_\beta$-extender
derived from a superstrong embedding $j:V\to M$
where $\beta=j(\kappa)$ and $\kappa=\crit(j)$.
\end{dfn}
Note that:
\begin{lem}
 If $E$ is a superstrong extender and $W\sats\ZFC$ is a transitive proper class 
with $E\in W$, then $W\sats$``$E$ is a superstrong extender''.
\end{lem}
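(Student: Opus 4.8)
The plan is to unpack the assertion ``$E$ is a superstrong extender'' into a few clauses and check that each descends to $W$. Fix $j\colon V\to M$ witnessing superstrength, so $\kappa=\crit(j)$, $\beta=j(\kappa)$, $V_\beta\sub M$, and $E$ is the $(\kappa,\beta)$-extender derived from $j$. Put $N'=\Ult(V,E)$ with the factor map $k\colon N'\to M$, $k\com i^V_E=j$; since $V_\beta\sub M$ we have $\crit(k)>\beta$, so $V_\beta^{N'}=V_\beta^M=V_\beta^V$. Hence $\beta$ is inaccessible in $V$ (it is so in $M$, and $V_\beta=V_\beta^M$), and regularity and strong-limitness pull down to $W$, so $\beta$ is inaccessible in $W$ as well. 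Also put $N=\Ult(W,E)$ with the factor embedding $\ell\colon N\to N'$, $[a,F]^N\mapsto[a,F]^{N'}$.

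I first handle the ``syntactic'' clauses. The statement ``$E$ is a $(\kappa,\beta)$-extender'' has all its potential counterexamples (to $\kappa$-completeness, coherence and normality of the measures $E_a$) inside $W$ once $E\in W$, so it is downward absolute to the transitive $W$; thus $W\sats$``$E$ is a $(\kappa,\beta)$-extender''. In particular the seeds $[\{\nu\},\mathrm{id}]$ enumerate the ordinals $<\beta$ both in $N$ and in $N'$, so $\ell$ fixes every ordinal $<\beta$ and $\crit(\ell)\geq\beta$. Next, $N$ embeds via $k\com\ell$ into the transitive class $M$, hence is wellfounded, and since $W\sats\ZFC$ sees no $\in$-descending $\om$-sequence through $N$, we get $W\sats$``$\Ult(W,E)$ is wellfounded''. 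Moreover $\ell(i^W_E(\kappa))=i^V_E(\kappa)=\beta$, which together with $\crit(\ell)\geq\beta$ forces $i^W_E(\kappa)=\beta$ --- an assertion about $W$-computable objects, hence true in $W$. Finally, a routine {\L}o\'{s} computation shows that in $W$ the extender $E$ is exactly the $(\kappa,\beta)$-extender derived from the ultrapower embedding $i^W_E\colon W\to N$.

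The remaining, and only substantive, clause is $V_\beta^W\sub N$, i.e.\ $W\sats$``$V_\beta\sub\Ult(W,E)$''; equivalently, the strength of $E$ over $W$ is $\geq\beta$. Over $V$ the strength of $E$ is $\geq\beta$, as $V_\beta^{N'}=V_\beta^V$ above, so this is an instance of the downward absoluteness of extender strength to inner models of $\ZFC$: one argues by induction on $\gamma\leq\beta$ that $V_\gamma^W=V_\gamma^{\Ult(W,E)}$. At the successor step, any $X\in\pow(V_\gamma^W)^W$ already lies in $\pow(V_\gamma^V)^V\sub V_{\gamma+1}^V\sub N'$ by the strength of $E$ over $V$; the key is that a function representing $X$ in $N'$ can be taken inside $W$ --- definable over $W$ from $X$, $E$, $\langle V_\alpha^W\rangle_{\alpha<\kappa}$ and an $E$-representative of $\gamma$ --- so that, as $\crit(\ell)\geq\beta$ exceeds the rank of $X$, one concludes $X\in N$. (The reduction of an arbitrary member of $V_\gamma^W$ to a subset of an ordinal $<\beta$ uses the inaccessibility of $\beta$ in $W$ together with the inductive hypothesis, so the decoding bijections also land in $N$.) This gives $V_\beta^W\sub N$, and combining everything, $W\sats$``$i^W_E\colon W\to\Ult(W,E)$ is a superstrong embedding with critical point $\kappa$ whose derived $(\kappa,i^W_E(\kappa))$-extender is $E$'', i.e.\ $W\sats$``$E$ is a superstrong extender''.

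The main obstacle is this last step. The syntactic clauses descend cheaply --- ``extender'' and ``wellfounded ultrapower'' because they are (essentially) $\Pi_1$, and ``$i_E(\kappa)=\beta$'' by the factor-map argument --- but ``$V_\beta\sub\Ult(\cdot,E)$'' is a $\Pi_2$-type assertion that does not transfer downward for free; one genuinely needs the downward absoluteness of extender strength, whose inductive proof requires arranging the representing functions inside $W$ so that the factor embedding $\ell$ carries $V$'s witnesses into $\Ult(W,E)$. Everything else is routine ultrapower bookkeeping of the kind used elsewhere in the paper.
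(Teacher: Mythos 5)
The paper records this lemma without proof, treating it as routine, so there is no paper argument to compare against; I'll evaluate your proposal on its own. The overall shape --- peel off the syntactic clauses via the factor map $\ell\colon\Ult(W,E)\to\Ult(V,E)$ and then handle $V_\beta^W\sub\Ult(W,E)$ as the only substantive point --- is right, and the syntactic parts (``$E$ is a $(\kappa,V_\beta)$-extender'' is essentially $\Pi_1$; wellfoundedness of $\Ult(W,E)$ via $k\com\ell$ into $M$; $i^W_E(\kappa)=\beta$ from the factor maps) are fine.

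Two points worth flagging. First, the assertion that ``$\beta$ is inaccessible in $V$ (it is so in $M$, and $V_\beta=V_\beta^M$)'' isn't justified by the stated reason: strong-limitness of $\beta$ is a property of $V_\beta$ and so does transfer from $M$ to $V$ (and then down to $W$, since a $W$-cardinal in bijection with a subset of $\pow(\gamma)$ is ${<\beta}$), but regularity of $\beta$ is a property of $V_{\beta+1}$, and $V_\beta=V_\beta^M$ alone does not give it. Fortunately your argument only invokes strong-limitness (for the coding into subsets of ordinals), so this does not break anything. Second, and more substantively, the ``$\crit(\ell)\geq\beta$'' you appeal to at the successor step is not free: it is itself a small induction on the ordinals below $\beta$, using that each seed $[\{\alpha\},\mathrm{id}]^W_E$ collapses to $\alpha$. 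Once that is said, the rest goes through, but the cleanest route avoids the induction and the coding entirely: since $E\in W$ gives $V_\beta\sub W$, the seed map $s\colon V_\beta\to\Ult(W,E)$, $s(x)=[\{x\},\mathrm{id}]^W_E$, is defined and $\in$-preserving, and $(k\com\ell)\com s=\mathrm{id}_{V_\beta}$; the injectivity and $\in$-preservation of $k\com\ell\colon\Ult(W,E)\to M$ then show directly that $\rg(s)$ is $\in$-downward closed in $\Ult(W,E)$, so $s$ is the identity and $V_\beta\sub\Ult(W,E)$ in one stroke. This also makes it transparent that no regularity of $\beta$ (in $V$ or in $W$) is needed.
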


\begin{rem}
 Say that a cardinal $\kappa$ is \emph{$\infty$-$1$-extendible}
 iff for every $\alpha\in\OR$ there is $\beta\in\OR$ with $\beta\geq\alpha$ and
 and an elementary
 $j:V_{\kappa+1}\to V_{\beta+1}$
 (hence $j(\kappa)=\beta$) with $\crit(j)=\kappa$.
\end{rem}

\begin{tm}
We have:
\begin{enumerate}
 \item\label{item:extendible_is_limit} Every extendible cardinal
 is $\infty$-$1$-extendible and carries a normal measure
 concentrating on $\infty$-$1$-extendible cardinals.
 \item\label{item:1-extendible_is_limit} Every $\infty$-$1$-extendible cardinal 
is 
 $\infty$-superstrong and carries a normal 
measure concentrating on $\infty$-superstrong cardinals.

\item\label{item:infty-ss_is_limit} Every $\infty$-superstrong cardinal is
$\Sigma_2$-strong and carries a normal measure concentrating on 
$\Sigma_2$-strong cardinals.
\end{enumerate}
\end{tm}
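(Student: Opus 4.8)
The three parts have a common shape: each says that a stronger large-cardinal property implies both a weaker one and a normal-measure-reflection statement. For part (\ref{item:extendible_is_limit}), I would first recall the classical fact (Kanamori) that an extendible cardinal $\kappa$ is the target of many extendibility embeddings, and extract from such embeddings maps $j:V_{\kappa+1}\to V_{\beta+1}$ with $\crit(j)=\kappa$ and $\beta$ arbitrarily large; this gives $\infty$-$1$-extendibility directly. For the normal-measure-reflection clause, I would fix an extendibility embedding $j:V_{\lambda}\to V_{\mu}$ with $\lambda$ large, let $U$ be the normal measure on $\kappa$ derived from $j$ (i.e. $X\in U\iff\kappa\in j(X)$), and observe that ``$\kappa$ is $\infty$-$1$-extendible'' reflects: since $\kappa$ really is $\infty$-$1$-extendible and $\lambda$ was chosen large enough, the statement ``$\kappa$ is $\infty$-$1$-extendible'' holds in $V_\lambda$ (it is expressible over $V_\lambda$ when $\lambda$ is a sufficiently closed limit), so $V_\mu\models$``$j(\kappa)$ is $\infty$-$1$-extendible below $j(\lambda)$'', and then a routine reflection argument on the normal ultrapower by $U$ places $\{\gamma<\kappa\mid\gamma\text{ is }\infty\text{-}1\text{-extendible}\}$ in $U$.

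For part (\ref{item:1-extendible_is_limit}), given $\infty$-$1$-extendibility I would take, for each $\alpha$, an embedding $j:V_{\kappa+1}\to V_{\beta+1}$ with $\crit(j)=\kappa$ and $\beta=j(\kappa)>\alpha$; the $V_\beta$-extender $E$ derived from $j$ is then by definition a superstrong extender, and $\Ult(V,E)$ is a superstrong embedding with critical point $\kappa$ sending $\kappa$ above $\alpha$, giving $\infty$-superstrongness. The measure-reflection clause again follows by deriving a normal measure from one such $j$ (with $\beta$ chosen large) and checking that ``$\kappa$ is $\infty$-superstrong'' reflects into $V_\beta$; the Lemma just stated (superstrong extenders are recognized as such by any $\ZFC$ inner model containing them) is exactly what lets one verify superstrongness locally inside $V_{j(\kappa)+1}$, which is the key point making the property low enough in complexity to reflect.

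For part (\ref{item:infty-ss_is_limit}), given a superstrong embedding $j:V\to M$ with $\crit(j)=\kappa$ and $j(\kappa)>\alpha$, I have $V_{j(\kappa)}\subseteq M$, so in particular $V_\alpha\subseteq M$; and since $V_{j(\kappa)}=V_{j(\kappa)}^M$, the $\Sigma_2$-theory of $V_\alpha$ is computed the same way in $V$ and in $M$ (a $\Sigma_2$ assertion about parameters in $V_\alpha$ is witnessed/refuted inside some $V_\xi$ with $\xi<j(\kappa)$, using that $V_{j(\kappa)}\models\ZFC$ and contains unboundedly many $V_\xi$'s agreeing with $V$). This yields $\Th_{\Sigma_2}^M(V_\alpha)=\Th_{\Sigma_2}^V(V_\alpha)$, so $\kappa$ is $\Sigma_2$-strong. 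For the reflection clause I would again derive a normal measure $U$ from a superstrong $j$ with $j(\kappa)$ large, note that ``$\kappa$ is $\Sigma_2$-strong'' is a statement that holds in $V_{j(\kappa)}$ about $\kappa$ (every relevant superstrong extender witnessing an instance below a given $\alpha<j(\kappa)$ lies in $V_{j(\kappa)}$ and is recognized there by the Lemma), hence $M\models$``$j(\kappa)$ is $\Sigma_2$-strong below $j$ of everything'', and conclude by reflecting down the normal ultrapower.

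**Main obstacle.** The routine direction in each part is easy; the delicate point throughout is the reflection clause, specifically arranging that the large-cardinal property in question is absolute between $V$ and the relevant initial segment $V_{j(\kappa)}$ (or $V_\lambda$), so that it is ``seen'' by the embedding and can be pulled back through the derived normal measure. For $\infty$-superstrongness and $\Sigma_2$-strongness this is handled cleanly by the Lemma on superstrong extenders being recognized by inner models of $\ZFC$; the one requiring the most care is $\infty$-$1$-extendibility, where one must check that ``for every $\alpha$ there is an elementary $j:V_{\kappa+1}\to V_{\beta+1}$'' is expressible and absolute at the level of $V_\lambda$ for suitable $\lambda$, i.e. that the witnessing embeddings can be taken inside $V_\lambda$. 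I would spend the bulk of the write-up on exactly this absoluteness bookkeeping; the ultrapower-reflection step itself is then standard.
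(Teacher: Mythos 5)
Your reductions in the easy directions are fine (deriving superstrong embeddings from $\infty$-$1$-extendibility embeddings, and showing $\Sigma_2$-strength from a superstrong $j$ using $V_{j(\kappa)}=V_{j(\kappa)}^M$ — though for the latter you should say explicitly that $\kappa$ is strong, hence $V_\kappa\prec_2 V$, hence $V_{j(\kappa)}^M\prec_2 M$; ``unboundedly many agreeing $V_\xi$'s'' is not quite an argument). The real gap is in the measure-concentration clauses of parts \ref{item:1-extendible_is_limit} and \ref{item:infty-ss_is_limit}, which you describe as ``handled cleanly by the Lemma.'' Reflecting ``$\kappa$ is $\infty$-superstrong'' (resp.\ ``$\Sigma_2$-strong'') into $V_\beta$ only gives ${<\beta}$-superstrongness (resp.\ ${<\beta}$-$\Sigma_2$-strength) of $\kappa$; for the derived normal measure to concentrate on $\infty$-superstrong (resp.\ $\Sigma_2$-strong) cardinals, you need the \emph{unbounded} version of the property to hold of $\kappa$ in the relevant target model. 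The paper's proof of both clauses is explicitly two-stage: first, the restricted extenders $E\rest\alpha$ ($\alpha<\beta$) lying in $V_\beta\sub M$ give $M\models$``$\kappa$ is ${<\beta}$-superstrong'' (resp.\ ${<\beta}$-$\Sigma_2$-strong''); second, one uses that $\beta=j(\kappa)$ is itself $\infty$-$1$-extendible (resp.\ $\Sigma_2$-strong) in $M$, by elementarity, and applies the resulting $M$-embeddings with critical point $\beta$ to push the bounded statement unboundedly in $M$. Your sketch omits this second propagation step entirely. (Equivalently, working with set embeddings as you do, you would need to argue that ${<\kappa}$-superstrongness coincides with $\infty$-superstrongness below a $\Sigma_2$-reflecting $\kappa$, which again requires a separate argument you do not supply.)

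You also misassign the difficulty: you say the bulk of the work is in part \ref{item:extendible_is_limit}, but the paper declares that part routine and leaves it to the reader; the genuine content of the theorem is precisely the propagation step in parts \ref{item:1-extendible_is_limit} and \ref{item:infty-ss_is_limit} that your proposal glosses over.
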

\begin{proof}
Part \ref{item:extendible_is_limit}:
This is routine and left to the reader.

Part \ref{item:1-extendible_is_limit}:
Let $\kappa$ be $\infty$-$1$-extendible.
Let $j:V_{\kappa+1}\to V_{\beta+1}$ be 
elementary
with $\crit(j)=\kappa$. Let $E$ be the extender
derived from $j$ with support $V_\beta$.
Let $M=\Ult(V,E)$ and $k:V\to M$ be the ultrapower map.
Then one can show that $k$ is a superstrong embedding
with $k(\kappa)=\beta$ and that 
$M\sats$``$\kappa$ is $\infty$-superstrong''.
(For the last clause, consider ultrapowers $\Ult(M,E\rest\alpha)$
where $\alpha<\beta$, and show that unboundedly many of these
produce superstrong embeddings in $M$,
and also use that $\beta$ is $1$-$\infty$-extendible in $M$.)

Part \ref{item:infty-ss_is_limit}: Let $\kappa$ be $\infty$-superstrong.
We show first that $\kappa$ is $\Sigma_2$-strong.
So let $\alpha\in\OR$. We may assume that $V_\alpha\elem_2 V$.
Let $j:V\to M$ be any superstrong embedding with $\crit(j)=\kappa$ and 
$\alpha<j(\kappa)$. It suffices to verify:
\begin{clmthree}
$\Th_{\Sigma_2}^M(V_\alpha)=\Th_{\Sigma_2}^V(V_\alpha)$.
\end{clmthree}
\begin{proof}
Let $\varphi$ be $\Sigma_2$ and $\vec{x}\in(V_\alpha)^{<\om}$.
If $V\sats\varphi(\vec{x})$ then $V_\alpha\sats\varphi(\vec{x})$,
which implies $M\sats\varphi(\vec{x})$. Conversely,
suppose $M\sats\varphi(\vec{x})$. Because $\kappa$
is $\infty$-superstrong, it is clearly strong,
which implies that $V_\kappa\elem_2 V$. Therefore $V_{j(\kappa)}^M\elem_2 M$.
Therefore $V_{j(\kappa)}^M\sats\varphi(\vec{x})$.
But $V_{j(\kappa)}^M=V_{j(\kappa)}$, so $V_{j(\kappa)}\sats\varphi(\vec{x})$,
so $V\sats\varphi(\vec{x})$, as desired.\end{proof}

Now let $j:V\to M$ be a superstrong embedding with $\crit(j)=\kappa$. We will 
show that
$M\sats$``$\kappa$ is $\Sigma_2$-strong'', which completes the proof.

\begin{clmthree} $M\sats$``$\kappa$ is ${<\beta}$-$\Sigma_2$-strong'',
where $\beta=j(\kappa)$. That is, for each $\alpha<\beta$,
$M$ has an elementary $k:M\to N$ with $\crit(k)=\kappa$
and $V_\alpha\sub N$ and 
$\Th_{\Sigma_2}^N(V_\alpha)=\Th_{\Sigma_2}^M(V_\alpha)$.\end{clmthree}
\begin{proof}Since $M\sats$``$\beta$ is strong'',
$V_\beta^M\elem_2 M$ and there are club many $\alpha<\beta$
such that $V_\alpha^M=V_\alpha\elem_2M$. Fix some such $\alpha$.
Let $E_\alpha$ be the extender derived from $j$
with support $V_\alpha$.
Then $E_\alpha\in V_\beta\sub M$, and $M\sats$``$E_\alpha$ is an extender''.
Moreover, letting $N_\alpha=\Ult(M,E_\alpha)$, we have $V_\alpha\sub 
N_\alpha$
and
\[ 
\Th_{\Sigma_2}^{N_\alpha}(V_\alpha)=\Th_{\Sigma_2}^M(V_\alpha).\]
For let
$t=\Th_{\Sigma_2}^V(V_\kappa)=\Th_{\Sigma_2}^M(V_\kappa)$.
Then letting $k_\alpha:M\to N_\alpha$ be the ultrapower map,
\[ j(t)=\Th_{\Sigma_2}^M(V_\beta)\text{ and 
}k_\alpha(t)=\Th_{\Sigma_2}^N(V_{k_\alpha(\kappa)}^N). \]
So
$\Th_{\Sigma_2}^M(V_\alpha)=j(t)\inter V_\alpha=k_\alpha(t)\inter 
V_\alpha=\Th_{\Sigma_2}^N(V_\alpha)$.
\end{proof}

Now since $\kappa$ is $\Sigma_2$-strong, $M\sats$``$\beta=j(\kappa)$ is 
$\Sigma_2$-strong''. So let $\alpha\in\OR$ be a strong limit cardinal. Then $M$
has an embedding $\ell:M\to N$ with $\crit(\ell)=\beta$
and $V_\alpha^M=V_\alpha^N$ and 
$\Th_{\Sigma_2}^M(V_\alpha^M)=\Th_{\Sigma_2}^N(V_\alpha^M)$.
By the claim and elementarity, $N\sats$``$\kappa$ is 
${<\ell(\beta)}$-$\Sigma_2$-strong''. But then extenders
in $N$ which witness ${<\alpha}$-$\Sigma_2$-strength
in $N$ also witness this in $M$. Since $\alpha$ was arbitrary, we are done.
\end{proof}

We now prove an analogue of Usuba's extendibility result down lower:

\begin{tm}\label{tm:Sigma_2-strong}
\Sigmatwostrong
\end{tm}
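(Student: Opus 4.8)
The plan is to mirror the proof of Theorem~\ref{tm:sharp}, with the self-embedding $j\colon L(A)\to L(A)$ used there replaced by a witness $j\colon V\to M$ to $\Sigma_2$-strength; the $\Sigma_2$-correctness of $M$ on an initial segment will play the role that ``$j$ is an embedding of $L(A)$ into itself'' played there. The inclusion $V_{\kappa+1}^{\Mmm}\sub V_{\kappa+1}^{\Mmm_\kappa}$ is immediate, since $\Mmm\sub\Mmm_\kappa$ (every ${<\kappa}$-ground is a ground) and rank is absolute between transitive classes. So the content is the reverse inclusion, and I would prove it by showing that every $x\in V_{\kappa+1}^{\Mmm_\kappa}$ lies in $W_r$ for each true index $r$.

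So fix a true index $r$ and an ordinal $\lambda>\kappa$ with $r\in V_\lambda$. Applying $\Sigma_2$-strength at $\alpha=\lambda$, fix an elementary $j\colon V\to M$ with $\crit(j)=\kappa$, $\lambda<j(\kappa)$, $V_\lambda\sub M$, and $\Th_{\Sigma_2}^M(V_\lambda)=\Th_{\Sigma_2}^V(V_\lambda)$ (and hence also the $\Pi_2$-theories of $V_\lambda$ agree, by complementation). Note $M\sats\ZFC$. Since ``$r$ is a true index'' is $\Pi_2$ and $r\in V_\lambda$, $M$ also thinks $r$ is a true index; moreover the forcing $\PP\in r$ witnessing this satisfies $\card^{W_r^M}(\PP)<\mathrm{rank}(r)<j(\kappa)$, and $r\in V_{j(\kappa)}^M$, so $W_r^M$ is a ${<j(\kappa)}$-ground of $M$, whence $\Mmm_{j(\kappa)}^M\sub W_r^M$. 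Next, exactly as in Theorem~\ref{tm:sharp}, $j\rest\Mmm_\kappa\colon\Mmm_\kappa\to\Mmm_{j(\kappa)}^M$ is elementary with critical point $\kappa$, so $V_\kappa^{\Mmm_\kappa}=V_\kappa^{\Mmm_{j(\kappa)}^M}$; and since $M\sats$``$j(\kappa)$ is $\Sigma_2$-strong'' (by elementarity), $M\sats$``$j(\kappa)$ is inaccessible'' and so $\Mmm_{j(\kappa)}^M\sats$``$j(\kappa)$ is inaccessible'', which gives $V_\kappa^{\Mmm_\kappa}=V_\kappa^{\Mmm_{j(\kappa)}^M}\in\Mmm_{j(\kappa)}^M\sub W_r^M$.

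Now take any $x\in V_{\kappa+1}^{\Mmm_\kappa}$; then $\mathrm{rank}(x)\le\kappa<\lambda$, so $x\in V_\lambda$. By elementarity $j(x)\in\Mmm_{j(\kappa)}^M\sub W_r^M$, and since $\crit(j)=\kappa$ and $x\sub V_\kappa^{\Mmm_\kappa}$ we get $x=j(x)\cap V_\kappa^{\Mmm_\kappa}$. As both $j(x)$ and $V_\kappa^{\Mmm_\kappa}$ lie in $W_r^M\sats\ZFC$, so does $x$; thus $M\sats\varphi_{\grd}(r,x)$. Since $\varphi_{\grd}$ is $\Sigma_2$ and $r,x\in V_\lambda$, $\Sigma_2$-correctness of $M$ on $V_\lambda$ gives $V\sats\varphi_{\grd}(r,x)$, i.e.\ $x\in W_r$. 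As $r$ was an arbitrary true index, $x\in\Mmm$, and since $\mathrm{rank}(x)\le\kappa$ this yields $x\in V_{\kappa+1}^{\Mmm}$, which completes the argument.

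I expect the main obstacle — and the reason $\Sigma_2$-strength, rather than merely measurability, is needed — to be the absoluteness bookkeeping between $V$ and the target $M$: one must pass ``$r$ is a true index'' up to $M$, know that $M$ computes $W_r$ well enough that $\Mmm_{j(\kappa)}^M\sub W_r^M$, and then pull the $\Sigma_2$ fact ``$x\in W_r$'' back down to $V$; this hinges on $r$ and $x$ being genuine parameters of the correctly-computed $V_\lambda\sub M$ and on the relevant ground-theoretic notions being $\Sigma_2$/$\Pi_2$. A secondary point to check carefully is that the identity $x=j(x)\cap V_\kappa^{\Mmm_\kappa}$ is a computation available inside $W_r^M$, which is why one needs $V_\kappa^{\Mmm_\kappa}\in W_r^M$ and hence the inaccessibility of $j(\kappa)$ in $M$.
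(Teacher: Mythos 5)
Your proof is correct and uses essentially the same approach as the paper: the same key ingredients appear in both ($\Pi_2$-absoluteness of ``true index'' to pass $r$ up to $M$, the inclusion $\Mmm_{j(\kappa)}^M\sub W_r^M$, the critical-point argument giving agreement of the two mantles up to level $\kappa$, and $\Sigma_2$-agreement between $V$ and $M$ on $V_\lambda$ to transfer facts about $W_r$). The only cosmetic difference is that the paper argues by contradiction and invokes local ground definability (Fact~\ref{fact:local_ground_def}) to get $W_r^M\inter V_\lambda=W_r\inter V_\lambda$ wholesale, whereas you transfer the single $\Sigma_2$ statement $\varphi_{\grd}(r,x)$ pointwise from $M$ down to $V$.
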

\begin{proof}
 Suppose not and let $r$ be such that $V_{\kappa+1}^{W_r}\psub 
V_{\kappa+1}^{\Mmm_\kappa}$. Let $\lambda\in\OR$ be such 
that $\beth_\lambda=\lambda$ 
and $r\in V_\lambda$. Let $j:V\to M$ 
witness $\Sigma_2$-strength
with respect to $\lambda$.

Since the class of true indices is $\Pi_2$,
$M\sats$``$r$ is a true index''.
Also, by the local definability of grounds,
\[W_r^M\inter V_\lambda=W_r^{V_\lambda^M}=W_r^{V_\lambda}=W_r\inter V_\lambda.\]
In particular, $V_{\kappa+1}^{W_r^M}=V_{\kappa+1}^{W_r}\psub 
V_{\kappa+1}^{\Mmm_\kappa}$.

Since $r\in V_\lambda\sub V_{j(\kappa)}^M$, therefore
$\Mmm_{j(\kappa)}^M\inter 
V_{\kappa+1}\psub\Mmm_\kappa\inter V_{\kappa+1}$.
But since $\crit(j)=\kappa$, 
as in the proof of Theorem \ref{fact:meas}, we have
\[ \Mmm_\kappa\inter 
V_{\kappa+1}\sub\Mmm_{j(\kappa)}^M\inter V_{\kappa+1},\]
a contradiction.
\end{proof}

\begin{ques} Suppose $\kappa$ is strong. Is 
$V_{\kappa+1}^{\Mmm}=V_{\kappa+1}^{\Mmm_\kappa}$?\end{ques}

We now move toward the positive results
in the cases that $\kappa$ is inaccessible and/or weakly compact.
Toward these we first prove a couple of lemmas.

\begin{lem}[$\kappa$-uniform hulls]\label{lem:kappa-uniform_hull}
Let $\kappa$ be inaccessible. For true indices
$r\in V_\kappa$, let $(\PP_r,G_r)$ witness this,
and otherwise let $\PP_r=G_r=\emptyset$.
Let $\lambda=\beth_\lambda$ with $\cof(\lambda)>\kappa$ and $V_\lambda\elem_2 
V$.
Let $S\in V_\lambda$. Then there is $\widetilde{X}$ such that,
letting $\widetilde{X}_r=\widetilde{X}\inter V_\lambda^{W_r}$ for $r\in 
V_\kappa$, we have:
\begin{enumerate}
\item $V_\kappa\cup\{S,\kappa\}\sub \widetilde{X}\elem V_\lambda$
and $\widetilde{X}^{<\kappa}\sub \widetilde{X}$
and $|\widetilde{X}|=\kappa$, 
\item $\widetilde{X}_r\in W_r$ and $\widetilde{X}_r\elem V_\lambda^{W_r}\elem_2 
W_r$,
\end{enumerate}
and letting $X$ be the transitive collapse
of $\widetilde{X}$ and $\sigma:X\to \widetilde{X}$ the 
uncollapse
and $X_r,\sigma_r$ likewise, then:
\begin{enumerate}[resume*]
 \item $X_r\sub X$ and in fact, 
$X_r=W_r^{X}$,
 \item $\sigma:X\to V_\lambda$ is fully elementary with
  $\crit(\sigma)>\kappa$,
   \item $\sigma_r:X_r\to V_\lambda^{W_r}$ is fully elementary
   and $\sigma_r\sub\sigma$,
 \item $G_r$ is $(X_r,\PP_r)$-generic and 
$X=X_r[G_r]$,
 \item\label{item:hull_mantle_invariance} 
$\Mmm_\kappa^{X}=\Mmm_\kappa^{X_r}=\bigcap_{
s\in 
V_\kappa}X_s$; hence 
$\Mmm_\kappa^{X}\in\Mmm_\kappa$,
\item 
\label{item:<kappa-closure}
$X^{<\kappa}\sub X$ and 
$X_r^{<\kappa}\inter W_r\sub X_r$
and $(\Mmm_\kappa^{X})^{<\kappa} 
\inter\Mmm_\kappa\sub
\Mmm_\kappa^{X}$,
\item 
$\sigma\rest\Mmm_\kappa^{X}=\sigma_r\rest\Mmm_\kappa^{\bar{
\widetilde{X}} _r}$;
hence $\sigma\rest\Mmm_\kappa^{X}\in\Mmm_\kappa$,
\item 
$\sigma\rest\Mmm_\kappa^{X}:\Mmm_\kappa^{X}
\to\Mmm_\kappa^{V_\lambda
}$ is fully elementary.
\item\label{item:statements_in_X} 
$V_\lambda,\widetilde{X},X,\widetilde{X}_r,X
_r$ 
each satisfy $T_1$ and the following 
statements:
\begin{enumerate}
\item ``There are 
unboundedly many $\eta$
such that $\eta=\beth_\eta$'',
\item ``Fact \ref{fact:local_ground_directedness}'',
\item 
``There is 
$\xi=\beth_\xi$ such that for each $r\in V_\kappa$
and $s\in V_\kappa^{W_r}$,
we have $W_r\sats$``$s$ is a  index'' iff $V_\xi^{W_r}\sats$``$s$ is a true 
index''.
\end{enumerate}
\end{enumerate}
\end{lem}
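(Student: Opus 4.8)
The natural approach is a standard Skolem-hull construction, but arranged so that the hull and its collapse interact correctly with each ground $W_r$ simultaneously. First I would fix parameters $p_r$ coding $(\PP_r,G_r)$ for each true index $r\in V_\kappa$, together with $S,\kappa$ and a well-order of $V_\lambda$ (available since $\lambda=\beth_\lambda$ and $V_\lambda\elem_2 V$, so $V_\lambda\sats T_1$). Then I would build $\widetilde X\elem V_\lambda$ as an increasing union $\widetilde X=\bigcup_{n<\om}\widetilde X_n$ where $\widetilde X_0=V_\kappa\cup\{S,\kappa\}\cup\{p_r:r\in V_\kappa\}$ and $\widetilde X_{n+1}$ is the Skolem hull of $\widetilde X_n\cup[\widetilde X_n]^{<\kappa}$ in $V_\lambda$; since $\kappa$ is inaccessible and $|\widetilde X_0|=\kappa$, each $\widetilde X_n$ has size $\kappa$, so $|\widetilde X|=\kappa$, and by construction $\widetilde X^{<\kappa}\sub\widetilde X$. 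This gives item 1. For item 2, note $W_r=W_r^{V_\lambda}$ is $\Sigma_2(\{r\})$ over $V_\lambda$ (by Fact \ref{fact:local_ground_def} and $V_\lambda\elem_2 V$), hence definable inside $\widetilde X$ once $r\in\widetilde X$; so $\widetilde X_r=\widetilde X\inter V_\lambda^{W_r}$ is precisely the set computed inside $\widetilde X$ as ``$V_\lambda^{W_r}$'', which an elementary-substructure argument (using that $W_r$, $V_\lambda^{W_r}$, and the relevant forcing data all lie in $\widetilde X$) shows equals $W_r^{\widetilde X}=V_\lambda^{W_r}\inter\widetilde X$ and satisfies $\widetilde X_r\elem V_\lambda^{W_r}\elem_2 W_r$ via Lemma \ref{lem:ground_Sigma_2_elem}.

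Next I would collapse: let $\sigma:X\to\widetilde X$ be the uncollapse and $\sigma_r:X_r\to\widetilde X_r$ the restricted uncollapse with $X_r$ the transitive collapse of $\widetilde X_r$. Item 4 is immediate ($\sigma$ is elementary, and $\crit(\sigma)>\kappa$ because $V_\kappa\cup\{\kappa\}\sub\widetilde X$ pointwise fixed below $\kappa$ and $\kappa$ itself in $\widetilde X$). Item 5 follows because $\sigma_r$ is a restriction of $\sigma$ to the collapse of the definable-in-$\widetilde X$ class $\widetilde X_r$; item 3, $X_r=W_r^X$, is then just the statement that the collapse commutes with the (absolute, $\Sigma_2$-in-parameter) ground operator, using that $X\sats T_1+{}$``enough facts about grounds'' (which is item 10, obtained by reflecting from $V_\lambda$ through $\sigma$). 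Item 6, that $G_r$ is $(X_r,\PP_r)$-generic with $X=X_r[G_r]$, is a forcing calculation: $\PP_r\in V_\kappa\sub X_r$ is fixed by $\sigma$, the dense sets of $\PP_r$ in $X_r$ are the collapses of dense sets in $\widetilde X_r$ — of size $\kappa$, hence elements of $\widetilde X$ by the ${<\kappa}$-closure, but actually one needs $\sigma_r^{-1}$ of dense sets, which are met by $G_r$ because $G_r$ is $(V_\lambda^{W_r},\PP_r)$-generic and the dense sets in $X_r$ pull back under $\sigma_r$ to dense sets in $\widetilde X_r\elem V_\lambda^{W_r}$; and $X=X_r[G_r]$ because $\widetilde X=\widetilde X_r[G_r]$ (as $V_\lambda=V_\lambda^{W_r}[G_r]$ and everything is absorbed correctly).

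For items 7–9, I would first note $\Mmm_\kappa^X=\bigcap_{s\in V_\kappa}W_s^X=\bigcap_{s\in V_\kappa}X_s$ directly from item 3 and $V_\kappa\sub X$; the equality with $\Mmm_\kappa^{X_r}$ then follows from Fact \ref{fact:mantle_invariance} applied inside $X$ (which satisfies $T_1$ and the mantle-invariance statement), i.e. $\Mmm_\kappa^{X_r}=\Mmm_\kappa^{W_r^X}=\Mmm_\kappa^X$ since $\kappa$ is a strong limit in $X$ and $\PP_r$ has size ${<\kappa}$ there. That $\Mmm_\kappa^X\in\Mmm_\kappa$ follows because $\Mmm_\kappa^X=\bigcap_{s\in V_\kappa}X_s$, each $X_s\in W_s$ (as $X_s=W_s^X$ and $X\in W_s$, the latter because $X$ has size $\kappa<$ any true index and, more carefully, because $\widetilde X$ and the collapse are computed the same way from $V_\lambda^{W_s}$-data available in $W_s$ — this uses $\widetilde X_s\in W_s$ from item 2), so $\Mmm_\kappa^X\in\bigcap_s W_s=\Mmm_\kappa$. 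Items 8–9 on the closure and elementarity of $\sigma\rest\Mmm_\kappa^X$ follow: $\sigma\rest\Mmm_\kappa^X=\sigma_r\rest\Mmm_\kappa^{X_r}$ since these agree on the common domain, this restriction is amenable to and an element of $W_r$ for each $r$ (its initial segments are in $\widetilde X\sub W_r$ via the ${<\kappa}$-closure and the fact that $\widetilde X$ is built inside $V_\lambda$ from $W_r$-absolute data), hence lies in $\Mmm_\kappa$; and full elementarity of $\sigma\rest\Mmm_\kappa^X:\Mmm_\kappa^X\to\Mmm_\kappa^{V_\lambda}$ comes from $\sigma:X\elem V_\lambda$ together with $\Mmm_\kappa$ being uniformly definable ($\Pi_2$ true-index predicate, $\Sigma_2$-in-$r$ grounds) in both $X$ and $V_\lambda$. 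The ${<\kappa}$-closure clauses in item 8 reduce to $\widetilde X^{<\kappa}\sub\widetilde X$ from the construction (for $X$), its relativization inside each $W_r$ (for $X_r$, using $\widetilde X_r\in W_r$), and then intersecting. The main obstacle, I expect, is item 10 stated carefully enough — getting $X,X_r$ to satisfy precisely the finite fragment of ``geology theory'' ($T_1$, local directedness, local ground definability, existence of many $\beth$-fixed points) needed to justify the earlier items, which is a matter of pinning down a single sentence that reflects down through $\sigma$ and $\sigma_r$; everything else is routine forcing and Skolem-hull bookkeeping.
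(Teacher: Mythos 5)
Your proposal runs into two separate, substantive gaps, and the second is the actual heart of the lemma.

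\textbf{The $\omega$-length union fails $<\kappa$-closure.} You build $\widetilde X=\bigcup_{n<\om}\widetilde X_n$, adding $[\widetilde X_n]^{<\kappa}$ at each stage. But for any $f:\om\to\widetilde X$ that diagonalizes across the stages (say $f(n)\in\widetilde X_{n+1}\setminus\widetilde X_n$), the range of $f$ is contained in no single $\widetilde X_n$, so $f$ is never swept into the hull. Since $\om<\kappa$, this already refutes $\widetilde X^{<\kappa}\sub\widetilde X$. To get $<\kappa$-closure of the union you must build a chain of length (at least) $\kappa$ with each $\widetilde X_\alpha$ itself $<\kappa$-closed; then regularity of $\kappa$ places the range of any $f:\gamma\to\widetilde X$, $\gamma<\kappa$, inside some $\widetilde X_\alpha$. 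This is exactly what the paper does.

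\textbf{The clause $\widetilde X_r\in W_r$ is never established.} Your item-2 argument shows $\widetilde X_r\elem V_\lambda^{W_r}$, which does follow from elementarity and the $\Sigma_2$-definability of $W_r$. But the set $\widetilde X_r=\widetilde X\inter W_r$ is a $V$-object of size $\kappa$; nothing about elementarity or definability puts it \emph{inside} $W_r$. (The analogy to be wary of: a generic filter $G_r$ is definable from $\widetilde X$-data over $V$, lies in $\widetilde X$, yet $G_r\notin W_r$.) This is precisely what makes the lemma nontrivial and is why the paper's $\kappa$-length construction is interleaved: at the successor step one intersects the current hull with $W_r$, covers it by a hull $\widetilde X''\in W_r$ that is $<\kappa$-closed \emph{in the sense of $W_r$}, and sets $\widetilde X_{\alpha+1}=\widetilde X''[G_r]$ (rotating through $r\in V_\kappa$), so that for each $r$ there are cofinally many $\alpha<\kappa$ with $\widetilde X_\alpha\inter W_r\in W_r$. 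Even then $\widetilde X_r\in W_r$ is not automatic at the top: one fixes a name $\tau\in W_r$ for the chain, uses $|\PP_r|<\kappa$ to find a single $p\in G_r$ deciding $\tau_\alpha\inter W_r$ for cofinally many $\alpha$, and recovers $\widetilde X_r$ inside $W_r$ as a union along the ($W_r$-computable) set of such $\alpha$. Without this architecture, item 2 — and hence items 3, 6, 7, 8 that rest on it — do not follow. Relatedly, your later assertion that $X\in W_s$ is false in general: $X=X_r[G_r]$ with $G_r\notin W_r$ for nontrivial $\PP_r$, so $X\notin W_r$; the correct statement is $X_s\in W_s$, and that too is exactly what needs the construction above. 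The one part of your plan that does track the paper once the right construction is in place is items 9--10: reflecting a suitable finite fragment of geology theory through $\sigma$ and $\sigma_r$ is indeed how those clauses are handled.
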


\begin{proof}The fact that $V_\lambda^{W_r}\elem_2 W_r$
is by Lemma \ref{lem:ground_Sigma_2_elem}.

Construct an increasing sequence
$\left<\widetilde{X}_\alpha\right>_{\alpha<\kappa}$ such that
 $\widetilde{X}_\alpha\elem V_\lambda$
and $V_\kappa\cup\{x\}\sub \widetilde{X}_\alpha$
and $\widetilde{X}_\alpha^{<\kappa}\sub \widetilde{X}_\alpha$
and $|\widetilde{X}_\alpha|=\kappa$,
and such that for each $r\in V_\kappa$
there are cofinally many $\alpha<\kappa$
such that $\widetilde{X}_\alpha\inter W_r\in W_r$.

To construct this sequence, suppose we have
constructed $\widetilde{X}_\alpha$,
and let $r\in 
V_\kappa$. 
Let $X=\widetilde{X}_\alpha\inter W_r$.
By elementarity, $X\elem V_\lambda^{W_r}$ and
\[ 
\widetilde{X}_\alpha=X[G_r]=\{\tau_{G_r}\bigm|\tau\in\bar{
\widetilde{X}}\} .\]
Since $|X|=\kappa$, there is some $\widetilde{X}'\in 
W_r$ with $|\widetilde{X}'|=\kappa$ (hence 
$W_r\sats$``$|\widetilde{X}'|=\kappa$''),
and $X\sub \widetilde{X}'$, so there is also 
$\widetilde{X}''\in W_r$
with $\widetilde{X}''\elem V_\lambda^{W_r}$
and $\widetilde{X}'\sub \widetilde{X}''$ and $|\widetilde{X}''|=\kappa$ (in $V$ 
and $W_r$)
and such that $W_r\sats$``$(\widetilde{X}'')^{<\kappa}\sub (\widetilde{X}'')$''.
It easily follows that
\[ \widetilde{X}_\alpha\sub \widetilde{X}''[G_r]=\{\tau_{G_r}\bigm|\tau\in 
\widetilde{X}''\}\elem 
V_\lambda \]
and 
$\widetilde{X}''[G_r]\inter W_r=\widetilde{X}''$.
We set $\widetilde{X}_{\alpha+1}=\widetilde{X}''[G_r]$.
Then everything is clear except for the requirement
that $\widetilde{X}_{\alpha+1}^{<\kappa}\sub \widetilde{X}_{\alpha+1}$.
So let $f:\gamma\to \widetilde{X}_{\alpha+1}$ where $\gamma<\kappa$ (with $f\in 
V$);
we claim that $f\in \widetilde{X}_{\alpha+1}$.
Let $g:\gamma\to \widetilde{X}''$ be such that $g(\alpha)_{G_r}=f(\alpha)$ for 
each 
$\alpha<\gamma$.
So $g\in V$, but we don't know that $g\in W_r$.
But there is a $\PP_r$-name $\dot{g}\in V_\lambda^{W_r}$
such that $\dot{g}_{G_r}=g$. And $\widetilde{X}''\in W_r$, so there is $p_0\in 
G_r$
forcing that $\rg(\dot{g})\sub \widetilde{X}''$. Working in $W_r$ then,
we may fix for each $\alpha<\gamma$ an antichain
$A_\alpha\sub\PP_r$ maximal below $p_0$ and for each $p\in A_\alpha$
some $\tau_{\alpha p}\in \widetilde{X}''$ such that $p$ forces
that $\dot{g}(\alpha)=\tau_{\alpha p}$. 
Then the sequence $\left<\tau_{\alpha p}\right>_{(\alpha,p)\in I}$,
where
\[ I=\{(\alpha,p)\bigm|\alpha<\gamma\text{ and }p\in A_\alpha\},\]
is $\sub \widetilde{X}''$, and hence in $\widetilde{X}''$. Since 
$W_r\sats$``$(\widetilde{X}'')^{<\kappa}\sub (\widetilde{X}'')$'',
this gives
a name $\dot{g}''\in \widetilde{X}''$ such that
$p_0$ forces $\dot{g}''=\dot{g}$, and therefore
\[ g=\dot{g}_{G_r}=\dot{g}''_{G_r}\in 
\widetilde{X}''[G_r]=\widetilde{X}_{\alpha+1}. \]
But since $G_r\in \widetilde{X}_{\alpha+1}$, therefore $f\in 
\widetilde{X}_{\alpha+1}$,
so $\widetilde{X}_{\alpha+1}^{<\kappa}\sub \widetilde{X}_{\alpha+1}$ as desired.
With some simple bookkeeping then, we get an appropriate sequence.

Let now 
$\widetilde{X}=\bigcup_{\alpha<\kappa}\widetilde{X}_\alpha$.
We claim that $\widetilde{X}$ is as desired. The only thing we need to verify 
is that for each $r\in V_\kappa$, we have
\[ \widetilde{X}_r=\widetilde{X}\inter W_r\in W_r. \]

Fix $r$.  There is a $\PP_r$-name $\tau\in W_r$
such that $\tau_{G_r}=\left<\widetilde{X}_\alpha\right>_{\alpha<\kappa}$, and 
for cofinally 
many $\alpha<\kappa$
there is $p_\alpha\in G_r$ and $\widetilde{X}^r_\alpha\in W_r$ such that
\[ p_\alpha\forces\tau_\alpha\inter W_r=\check{\widetilde{X}^r_\alpha} \]
(hence $\widetilde{X}^r_\alpha=\widetilde{X}_\alpha\inter W_r$).
Since $\PP_r\in V_\kappa$, there is therefore a fixed $p\in\PP_r$
such that $p_\alpha=p$ for cofinally many $\alpha$. So
$\widetilde{X}_r=\bigcup_{\alpha\in I}\widetilde{X}^r_\alpha $
where
\[ I=\{\alpha<\kappa\bigm|\exists x\ [p\forces\tau_\alpha=\check{x}]\},\]
so $\widetilde{X}_r\in W_r$.

This completes the construction. The verification of the remaining
properties is now straightforward.
We omit discussing them, other than two remarks.
In part \ref{item:<kappa-closure},
the third statement follows directly from the first two
together with part \ref{item:hull_mantle_invariance};
the first two follow readily from the construction.
And in part \ref{item:statements_in_X}, note that $\xi$ exists
because $\cof(\lambda)>\kappa=|V_\kappa|$.\end{proof}

\begin{fact}
 Let $\kappa$ be weakly compact.
 Then $X$ be transitive with $\kappa\in X$ and $X^{<\kappa}\sub X$
 and $|X|=\kappa$.\footnote{In an earlier draft,
 the hypothesis ``$|X|=\kappa$'' was accidentally omitted,
 which obviously makes the statement equivalent
 to measurability.}
Then there is a non-principal $X$-$\kappa$-complete $X$-normal\footnote{That 
is, $\kappa$-completeness and normality with respect to
sequences in $X$.}
ultrafilter
$\mu$ over $\kappa$ such that letting $Y=\Ult(X,\mu)$
and $i^X_\mu$ the ultrapower embedding, then $Y$ is wellfounded. Moreover, 
$i^X_\mu$ is $\Sigma_1$-elementary and cofinal and $\crit(i^X_\mu)=\kappa$.
\end{fact}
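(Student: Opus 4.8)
The plan is to run the standard elementary‑embedding characterisation of weak compactness on an auxiliary $\kappa$‑model containing $X$, derive $\mu$ from the embedding, and analyse $\Ult(X,\mu)$ by composing the ultrapower with that embedding. Since $\kappa$ is weakly compact it is inaccessible, so $\kappa^{<\kappa}=\kappa$. First I would build a $\kappa$‑model $\bar M$ — a transitive $\bar M\sats\ZFC^-$ with $|\bar M|=\kappa$, ${}^{<\kappa}\bar M\sub\bar M$ and $\kappa\in\bar M$ — with $X\in\bar M$ (hence $X\sub\bar M$); this is routine: take $\bar M_0\elem\her_\theta$ of size $\kappa$, closed under ${<\kappa}$‑sequences, with $\trcl(\{X\})\cup(\kappa+1)\sub\bar M_0$, and transitively collapse, noting the collapse fixes $X$ and $\kappa$. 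Applying the weakly compact embedding property to $\bar M$ gives a transitive $\bar N$ and a fully elementary $j:\bar M\to\bar N$ with $\crit(j)=\kappa$. Since $X\in\bar M$ and $\bar M,\bar N$ correctly compute the satisfaction relation for set structures, $j\rest X:X\to j(X)$ is fully elementary with $\crit(j\rest X)=\kappa$.

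Next I would put $\mu=\{A\in\pow(\kappa)\inter X\mid\kappa\in j(A)\}$. That $\mu$ is a non‑principal $X$‑$\kappa$‑complete $X$‑normal ultrafilter over $\kappa$ is the usual computation: for $\vec A=\langle A_\xi\mid\xi<\gamma\rangle\in X$ with each $A_\xi\in\mu$ (so $\gamma\leq\kappa$) we have $\vec A\in\bar M$, and as $\crit(j)=\kappa$ the $\xi$‑th entry of $j(\vec A)$ is $j(A_\xi)$ for all $\xi<\kappa$; hence if $\gamma<\kappa$ then $j(\bigcap_\xi A_\xi)=\bigcap_\xi j(A_\xi)\ni\kappa$, while if $\gamma=\kappa$ then $\kappa\in j(\triangle_\xi A_\xi)$ iff $\kappa\in j(A_\xi)$ for every $\xi<\kappa$. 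Non‑principality holds since $j(\{\alpha\})=\{\alpha\}\not\ni\kappa$. Here (and below) one uses that $X$ is closed under the rudimentary operations, as it is in the intended applications.

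Then I would analyse $Y=\Ult(X,\mu)$ (formed with functions $f\in X$, $f:\kappa\to X$) via the factor map $k:Y\to j(X)$, $k([f]_\mu)=j(f)(\kappa)$. For $f,g\in X$ the set $\{\alpha\mid f(\alpha)=g(\alpha)\}$ (and likewise with ``$\in$'' in place of ``$=$'') lies in $X\sub\bar M$, and $\kappa$ belongs to its $j$‑image iff $j(f)(\kappa)=j(g)(\kappa)$ (resp.\ $\in$), iff that set is in $\mu$; so $k$ is well defined, injective, and preserves and reflects $\in$. Thus $Y$ embeds into the wellfounded $(j(X),\in)$ and so is wellfounded; replacing $Y$ by its transitive collapse and $k$ by the induced $\pi:Y\to j(X)$ gives $\pi\com i^X_\mu=j\rest X$. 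Now $\crit(i^X_\mu)=\kappa$: one has $i^X_\mu(\alpha)=\alpha$ for $\alpha<\kappa$ by $\in$‑preservation together with $\pi(i^X_\mu(\alpha))=j(\alpha)=\alpha$, while by $X$‑normality $[\mathrm{id}]_\mu$ collapses to $\kappa$ and lies $\in$‑below $i^X_\mu(\kappa)$, so $i^X_\mu(\kappa)>\kappa$. And $i^X_\mu$ is cofinal: each element of $Y$ is $\in$‑below $i^X_\mu(\rg(f))$ for a suitable $f$, using $\rg(f)\in X$.

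Finally, $\Sigma_1$‑elementarity of $i^X_\mu$ follows from its being $\Sigma_0$‑elementary together with cofinality, by the standard argument (a witness for a $\Sigma_1$ statement in $Y$ lies $\in$‑below some $i^X_\mu(x)$ and so reflects to $X$ via $\Sigma_0$‑elementarity; a $\Sigma_0$‑witness in $X$ maps to one in $Y$), and $\Sigma_0$‑elementarity of the ultrapower embedding is the familiar fact about ultrapowers by $X$‑ultrafilters. The step I expect to be the main obstacle is exactly this last one: isolating the closure properties of $X$ needed to make $\Ult(X,\mu)$ and the \L o\'s / cofinality arguments go through when $X$ is not assumed to satisfy $\ZFC$ (in the applications $X\sats\ZFC$, and then one simply has the full \L o\'s theorem), together with the bookkeeping ensuring that $\bar M$ — and hence $j$ — genuinely captures $\pow(\kappa)\inter X$ and that the resulting $\mu$ is normal and $\kappa$‑complete with respect to all sequences lying in $X$.
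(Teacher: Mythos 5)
Your proof is correct and takes essentially the same route as the paper's, which is just a three-sentence sketch (take any elementary $\pi:X\to Z$ with $Z$ transitive and $\crit(\pi)=\kappa$, let $\mu$ be the normal measure derived from $\pi$, and note that $\mu$ works); you simply fill in the standard details of producing such a $\pi$ via an auxiliary $\kappa$-model $\bar M\sats\ZFC^-$ with $X\in\bar M$, and of analysing $\Ult(X,\mu)$ via the factor map into $j(X)$. Your caveat that $X$ should be closed under rudimentary operations points to a real but minor imprecision in the statement as written, equally glossed over in the paper's proof and harmless in the intended application, where $X$ satisfies $T_1$.
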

\begin{proof}

 Let $\pi:X\to Z$ be any elementary embedding with $Z$ transitive
 and $\crit(\pi)=\kappa$. Let $\mu$ be the normal measure derived from $\pi$.
Note that $\mu$ works.
\end{proof}

We now extend the situation above, adding the assumption that 
$\kappa$ is weakly compact.

\begin{lem}[$\kappa$-uniform weak compactness 
embedding]\label{lem:kappa-uniform_wc}
Adopt the assumptions and notation from the statement
and proof of Lemma \ref{lem:kappa-uniform_hull}.
Assume further that $\kappa$ is weakly compact.
Let $\pi:X\to Y$
witness the weak compactness of $\kappa$ in $V$,
with $Y=\Ult(X,\mu)$ for an $X$-$\kappa$-complete $X$-normal ultrafilter
$\mu$ over $\kappa$, and $\pi=i^X_\mu$.
For $r\in V_\kappa$, 
let $\mu_r=\mu\inter X_r$.
Then:
\begin{enumerate}
\item\label{item:mu_r_in_W_r} $\mu_r\in W_r$ and $\mu_r$ is an 
$X_r$-$\kappa$-complete ultrafilter over $\kappa$;
let
\[ Y_r=\Ult(X_r,\mu_r)\text{ and }\pi_r:X_r\to Y_r \]
the ultrapower map; so $Y_r,\pi_r\in W_r$,
\item\label{item:mu_is_upward_closure} $\mu$ is the $X$-ultrafilter
generated by $\mu_r$ ($\mu_r$ is dense in $\mu$).
\item\label{item:function_representation}
For each $f:\kappa\to X_r$ with $f\in X$, there
is
$f_r\in X_r$ with $f_r:\kappa\to X_r$
and $f_r(\alpha)=f(\alpha)$ for $\mu$-measure one many 
$\alpha<\kappa$.\footnote{A draft
assumed only $f:\kappa\to X$, not $f:\kappa\to X_r$,
which obviously makes the statement false when $X\neq X_r$.}
 \item\label{item:Los_theorem} The ultrapowers satisfy
 Los' theorem 
for $\Sigma_1$ formulas, and $\pi_r,\pi$ are $\Sigma_2$-elementary.
 \item\label{item:Y_r_is_ground} $Y,Y_r\sats T_1$ and
 $Y_r$ is transitive, 
$Y_r=W_r^{Y}$, and $Y=Y_r[G_r]$.
\item\label{item:pi_r_sub_pi}$\pi_r\sub\pi$.
\item\label{item:pi(kappa)_mantles_invariant} 
$\Mmm_{\pi(\kappa)}^Y=\Mmm_{\pi_r(\kappa)}^{Y_r}\in W_r$;
hence this belongs to $\Mmm_\kappa$.
\item\label{item:pi_rest_in_mantle} 
$\pi\rest\Mmm_\kappa^X:\Mmm_\kappa^X\to\Mmm_{\pi(\kappa)}^Y$ is cofinal
$\Sigma_1$-elementary; this map belongs to $\Mmm_\kappa$.
\item\label{item:kappa_mantles_invariant} $\Mmm_\kappa^Y=\bigcap_{s\in 
V_\kappa}W^Y_s=\Mmm_\kappa^{Y_r}\in W_r$; hence this belongs to $\Mmm_\kappa$.
\item\label{item:ground_between_kappa_and_pi(kappa)}
$Y,Y_r$ each satisfy $T_1$ and the following statements:
\begin{enumerate}
\item\label{item:ground_between_a} ``There are 
unboundedly many $\eta$
such that $\eta=\beth_\eta$'',
\item\label{item:ground_between_b} ``Fact \ref{fact:local_ground_directedness} 
holds at $\theta=\pi(\kappa)=\beth_{\pi(\kappa)}$'',
\item\label{item:ground_between_c}
``There is 
$\xi=\beth_\xi$ such that for each $r\in V_{\pi(\kappa)}$ and $s\in 
V_{\pi(\kappa)}^{W_r}$,
we have $W_r\sats$``$s$ is true'' iff $V_\xi^{W_r}\sats$``$s$ 
is true''.
\end{enumerate}
Therefore there is $t\in V_{\pi(\kappa)}^Y$ with $W_t^Y\sub\Mmm_\kappa^Y$.
\end{enumerate}
\end{lem}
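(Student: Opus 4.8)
The plan is to run the weak-compactness analogue of the argument behind Fact~\ref{fact:meas} and Lemma~\ref{lem:kappa-uniform_hull}: the single $X$-ultrafilter embedding $\pi=i^X_\mu\colon X\to Y$ is shown to factor through each ground, in that $\pi\rest X_r$ is again an ultrapower embedding $\pi_r=i^{X_r}_{\mu_r}$, with $\mu_r$, $Y_r=\Ult(X_r,\mu_r)$ and $\pi_r$ all lying in $W_r$ and $Y=Y_r[G_r]$; the mantle-invariance parts (\ref{item:pi(kappa)_mantles_invariant})--(\ref{item:kappa_mantles_invariant}) then fall out by intersecting over $r\in V_\kappa$, exactly as the wellorder ${<^*}$ landed in $\Mmm_\kappa$ in Fact~\ref{fact:meas}. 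The one genuinely new ingredient over Lemma~\ref{lem:kappa-uniform_hull} is the interaction of $\mu$ with the small forcing $\PP_r\in V_\kappa$ relating $X_r$ to $X$, together with the $X$-$\kappa$-completeness of $\mu$, which by Lemma~\ref{lem:kappa-uniform_hull}(\ref{item:<kappa-closure}) applies to every relevant ${<\kappa}$-sequence arising, since these all lie in $X$.

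First I would treat the measure-theoretic core, parts (\ref{item:mu_r_in_W_r})--(\ref{item:function_representation}), via a ``decide on a $\mu$-large set'' trick. Given $A\sub\kappa$ in $X$ with a $\PP_r$-name $\dot A\in X_r$, put $A^p=\{\alpha<\kappa : p\text{ forces }\check\alpha\in\dot A\}\in X_r$ for $p\in\PP_r$; then $A=\bigcup_{p\in G_r}A^p$, and since $\langle A^p\rangle_{p\in G_r}\in X$ is a ${<\kappa}$-sequence and $|\PP_r|<\kappa$, $X$-$\kappa$-completeness of $\mu$ gives $p\in G_r$ with $A^p\in\mu$; as $A^p\sub A$ and $A^p\in X_r$, this is part (\ref{item:mu_is_upward_closure}). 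The same trick applied to a name $\dot f\in X_r$ for some $f\colon\kappa\to X_r$ lying in $X$ — partitioning $\kappa$ by which condition of $\PP_r$ decides $\dot f(\check\xi)$ — yields part (\ref{item:function_representation}). That $\mu_r=\mu\cap X_r$ is an $X_r$-$\kappa$-complete $X_r$-normal non-principal ultrafilter on $\kappa$ is then immediate, since $X_r$-sequences are $X$-sequences; and $\mu_r\in W_r$ is the standard L\'evy--Solovay-type conclusion, reconstructing $\mu\cap X_r$ over $W_r$ from a name for it and $|\PP_r|<\kappa$.

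The remaining structural parts are bookkeeping built on the above. Parts (\ref{item:Los_theorem})--(\ref{item:pi_r_sub_pi}): Los' theorem for $\Sigma_1$ formulas holds for each ultrapower by the $X$-ultrafilter Fact stated above; part (\ref{item:function_representation}) shows $\Ult(X_r,\mu_r)$ has exactly the classes of $\Ult(X,\mu)$ represented by functions with range in $X_r$, giving $\pi_r\sub\pi$, and that the ultrapower commutes with the small forcing — $Y=Y_r[G_r]$ and $Y_r=W_r^Y$ — using that $\crit(\pi)=\kappa$, so $\pi\rest V_\kappa=\mathrm{id}$ and $\pi(r)=r$, and that $\Sigma_2$-elementarity of $\pi$ sends $X_r=W_r^X$ into $W_r^Y$. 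The upgrade of $\pi_r,\pi$ to $\Sigma_2$-elementarity, that $Y,Y_r\sats T_1$, and parts (\ref{item:ground_between_a})--(\ref{item:ground_between_c}), follow from the corresponding facts for $X,X_r$ (Lemma~\ref{lem:kappa-uniform_hull}(\ref{item:statements_in_X})) together with cofinality of $\pi$ and the local computability of $\Sigma_2$-truth via the unboundedly many $\beth$-fixed points. For parts (\ref{item:pi(kappa)_mantles_invariant})--(\ref{item:kappa_mantles_invariant}): from $Y=Y_r[G_r]$ with $\PP_r\in V_\kappa$ one gets $W_s^Y=W_s^{Y_r}[G_r]$ for $s\in V_\kappa$ and, invoking local set-directedness inside $Y$ (valid by part (\ref{item:ground_between_b})), $\Mmm_{\pi(\kappa)}^Y\sub\Mmm_\kappa^Y=\bigcap_{s\in V_\kappa}W_s^Y=\Mmm_\kappa^{Y_r}\in W_r$, hence these sets belong to $\bigcap_{r\in V_\kappa}W_r=\Mmm_\kappa$; and $\pi\rest\Mmm_\kappa^X=\pi_r\rest\Mmm_\kappa^{X_r}\in W_r$ for every $r$ (using Lemma~\ref{lem:kappa-uniform_hull}(\ref{item:hull_mantle_invariance})), so that map too lies in $\Mmm_\kappa$, and it is cofinal $\Sigma_1$-elementary by Los. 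Finally, for the closing assertion of part (\ref{item:ground_between_kappa_and_pi(kappa)}): part (\ref{item:ground_between_c}) transfers through $\pi$, and applying local set-directedness inside $Y$ to the index set $V_\kappa\sub V_{\pi(\kappa)}^Y$ produces $t\in V_{\pi(\kappa)}^Y$ with $W_t^Y\sub\bigcap_{s\in V_\kappa}W_s^Y=\Mmm_\kappa^Y$.

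The step I expect to be the main obstacle is the commutation of the ultrapower with the small forcing — precisely that $Y=Y_r[G_r]$, $Y_r=W_r^Y$ and $\pi_r\sub\pi$ — since this is where part (\ref{item:function_representation}), the forcing theorem over $T_1^-$ (Lemma~\ref{lem:T_1_forcing}), and the ${<\kappa}$-closure of $X$ all have to be combined with care; once that coherence is secured, the mantle computations run exactly parallel to those in Fact~\ref{fact:meas}.
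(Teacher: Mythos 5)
Your overall plan is the paper's: factor $\pi$ through each $X_r$ via the derived measure $\mu_r$, use the ${<\kappa}$-closure of $X$ and $X_r$ to carry out the L\'evy--Solovay analysis, and drop everything into $\Mmm_\kappa$ by intersecting over $r\in V_\kappa$. Parts (\ref{item:mu_r_in_W_r})--(\ref{item:pi_r_sub_pi}) and (\ref{item:ground_between_kappa_and_pi(kappa)}) are handled in the same spirit as the paper. But your argument for part (\ref{item:pi(kappa)_mantles_invariant}) does not actually establish it. You derive $\Mmm_{\pi(\kappa)}^Y\sub\Mmm_\kappa^Y=\bigcap_{s\in V_\kappa}W_s^Y=\Mmm_\kappa^{Y_r}\in W_r$ and conclude that ``these sets belong to $\Mmm_\kappa$''; but a \emph{subset} of an element of $W_r$ need not itself be an element of $W_r$. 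What part (\ref{item:pi(kappa)_mantles_invariant}) asserts is the \emph{equality} $\Mmm_{\pi(\kappa)}^Y=\Mmm_{\pi_r(\kappa)}^{Y_r}$, and that is exactly the nontrivial content: once one has it, $\Mmm_{\pi(\kappa)}^Y$ is definable over $Y_r\in W_r$ and so lies in each $W_r$. Your appeal to local set-directedness inside $Y$ (part (\ref{item:ground_between_b})) could be used to obtain mantle invariance in $Y$ at $\pi(\kappa)$ and thence the equality, but you never state this and you would need to arrange that part (\ref{item:ground_between_b}) is proved first, independently of parts (\ref{item:pi(kappa)_mantles_invariant})--(\ref{item:kappa_mantles_invariant}), to avoid circularity. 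The paper instead proves (\ref{item:pi(kappa)_mantles_invariant}) directly: using the absoluteness bound $\xi$ from Lemma~\ref{lem:kappa-uniform_hull}(\ref{item:statements_in_X}) and the cofinality of $\pi$, it expresses $\Mmm_{\pi(\kappa)}^Y$ as $\bigcup_{\zeta\in I}\pi(\Mmm_\kappa^{V_\zeta^X})$ over $\beth$-fixed points $\zeta\geq\xi$ of $X$, and then the equality with $\Mmm_{\pi_r(\kappa)}^{Y_r}$ falls out of $\Mmm_\kappa^X=\Mmm_\kappa^{X_r}$ and $\pi_r\sub\pi$, with no reference to part (\ref{item:ground_between_b}).

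Two smaller points. The identity $W_s^Y=W_s^{Y_r}[G_r]$ you invoke is neither needed nor clearly correct; what the paper uses for part (\ref{item:kappa_mantles_invariant}) is that the density of the grounds of $X_r$ among those of $X$ is preserved by $\pi$ (if $X_r\sub X_s$ then $Y_r\sub Y_s$), which gives $\Mmm_\kappa^{Y_r}=\Mmm_\kappa^Y$ directly. And for part (\ref{item:Los_theorem}), the Fact you cite gives only $\Sigma_1$-elementarity and cofinality of $i^X_\mu$, not Los' theorem for $\Sigma_1$ formulas over the ultrapower; the latter is where $\Sigma_1$-Collection in $X$ together with the cofinal $\beth$-fixed points of $X$ (Lemma~\ref{lem:kappa-uniform_hull}(\ref{item:statements_in_X})) do the work, exactly as in the paper's proof of that part.
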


\begin{proof}
Parts \ref{item:mu_r_in_W_r}--\ref{item:function_representation}:
These are simple variants of the version for measurable cardinals $\kappa$
of $V[G]$ via  small forcing $\PP\in V_\kappa$;
one uses especially, however, the fact that $X_r$ is ${<\kappa}$-closed in 
$W_r$.
We leave the details to the reader.

Part \ref{item:Los_theorem}: Note that $V_\lambda$
satisfies $\Sigma_1$-Collection and ``For all $\alpha\in\OR$,
$V_\alpha$ exists and $\beth_\alpha\in\OR$ exists, and $\OR=\beth_\OR$'',
so $X_r,X$ do also.
Therefore if $\varphi$ is $\Sigma_0$ and $x\in X$
and
\[ X\sats\all\alpha<\kappa\ \exists y\ \varphi(x,y,\alpha) \]
then some $V_\xi^X\in X$ satisfies the same statement,
and hence there is $f\in X$ picking witnesses $y$.
This gives Los' theorem for $\Sigma_1$ formulas.
The $\Sigma_2$-elementarity of $\pi:X\to Y$ follows.
Likewise for $X_r,\pi_r$.

Parts \ref{item:Y_r_is_ground}, \ref{item:pi_r_sub_pi}:
The fact that $Y,Y_r\sats T_1$ follows from $\Sigma_2$-elementarity
and cofinality of $\pi,\pi_r$, and (for $\Sigma_1$-Collection)
that for each $\xi\in\OR^X$,
we have $\her_\xi^X\elem_1 X$ and $\her_\xi^{X_r}\elem_1 X_r$.
The rest follows as usual from the fact that
functions in $X$ with codomain $X_r$ are 
represented in $X_r$ (part \ref{item:function_representation}),
and again the $\Sigma_2$-elementarity of $\pi,\pi_r$.

Parts \ref{item:pi(kappa)_mantles_invariant}, \ref{item:pi_rest_in_mantle}:
Basically by invariance of $\Mmm_\kappa$  (Fact \ref{fact:mantle_invariance}),
we have $\Mmm_\kappa^X=\Mmm_\kappa^{X_r}$, and by part 
\ref{item:statements_in_X} of \ref{lem:kappa-uniform_hull},
 there is $\xi<\OR^X$ such that for each $r\in V_\kappa$
 and $s\in V_\kappa^{W_r}$, we have
$X_r\sats$``$s$ is true'' iff $V_\xi^{X_r}\sats$``$s$ is true''. Let
\[ T_r=\{s\in V_\kappa^{W_r}\bigm|W_r\sats\text{``}s\text{ 
 is true''}\}.\]
So $T_r\in X_r$ and has the same definition there;
likewise for $T_r\in Y_r$, since $\pi_r$ is $\Sigma_2$-elementary.
And because of the existence of $\xi$,
\[ \pi(T_r)=\{s\in V_{\pi_r(\kappa)}^{Y_r}\bigm|Y_r\sats\text{``}s\text{ 
is true''}\},\]
and it follows (in the case of $r=\emptyset$, but similarly in general),
\[ \Mmm_{\pi(\kappa)}^Y=\left(\bigcap_{s\in V_{\pi(\kappa)}^Y}W^Y_s\right)=
 \left(\bigcup_{\zeta\in I}
 \pi(\Mmm_\kappa^{V_\zeta})\right)\]
where $I$ is the set of all $\zeta\in[\xi,\OR^X)$
such that $\beth_\zeta^X=\zeta$. 
But $\Mmm_\kappa^X=\Mmm_\kappa^{X_r}$
and $\pi_r\sub\pi$, so
$\Mmm_{\pi(\kappa)}^Y=\Mmm_{\pi_r(\kappa)}^{Y_r}$.
The calculations above also show that
\[ \pi\rest\Mmm_\kappa^X:\Mmm_\kappa^X\to\Mmm_{\pi(\kappa)}^Y \]
is cofinal $\Sigma_1$-elementary, and likewise for $\pi_r\sub\pi$.

Part \ref{item:kappa_mantles_invariant}: By part \ref{item:Y_r_is_ground}, 
$W^Y_s=Y_s$, so $\Mmm_\kappa^Y=\bigcap_{s\in V_\kappa}W^Y_s$.
And note that the density of the grounds of $X_r$
in the grounds of $X$ is lifted
to that for those of $Y_r$ in those of $Y$.
(That is, for example, if $r,s$ are such that $X_r\sub X_s$,
then $Y_r\sub Y_s$, as this is preserved by $\pi$.)
So $\Mmm_\kappa^{Y_r}=\Mmm_\kappa^Y$, as desired.

Part \ref{item:ground_between_a}: For each $\zeta\in X$ with 
$\zeta=\beth_\zeta^X$,
we have $\pi(\zeta)=\beth_{\pi(\zeta)}^Y$.

Part \ref{item:ground_between_c}: If $\xi$ witnesses the
corresponding statement in $X$, note that 
$\pi(\xi)$ works in $Y$.

Part \ref{item:ground_between_b}:
We consider literally $Y$, but the same proof works for $Y_r$.
Note that there is a function $f:V_\kappa\to V_\kappa$
with $f\in X$, such that for each $R\in V_\kappa$,
$X\sats$``$t=f(R)$ is a true index
and $t$ witnesses Fact 
\ref{fact:local_ground_directedness}
for $R$'' ($f$ exists by the elementarity of $\sigma$).
We claim that $\pi(f)$ has the same property for $Y$.
For by $\Pi_2$-elementarity, $Y\sats$``Every $t\in\rg(\pi(f))$
is a true index''. Moreover, let $\xi$ be as before.
Then for each $\zeta$ such that $\xi<\zeta<\OR^X$ and $\zeta=\beth_\zeta^X$,
$V_\zeta^X$ satisfies ``$W_{f(R)}\sub W_r$ for each $R\in V_\kappa$
and $r\in R$''.
This lifts to $Y$ under $\pi$, and since $\pi$ is cofinal, this suffices.
\end{proof}

We are now ready to prove the main theorem for weakly compact $\kappa$.
The first proof that, under this assumption, $\Mmm_\kappa\sats$``$V_\kappa$ is 
wellordered''
is due to Lietz:

\begin{tm}\label{tm:weak_compact_tm}
\weakcompacttm
\end{tm}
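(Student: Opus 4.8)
The plan is to run, for each clause, an adaptation of Schindler's proof of Fact~\ref{fact:meas}, with the measurable ultrapower replaced by the $\kappa$-uniform hull of Lemma~\ref{lem:kappa-uniform_hull} and the $\kappa$-uniform weak compactness embedding of Lemma~\ref{lem:kappa-uniform_wc}. For a given argument we choose $\lambda=\beth_\lambda$ with $\cof(\lambda)>\kappa$, $V_\lambda\elem_2 V$, and — where needed — $V_\lambda^{\Mmm_\kappa}\elem_n\Mmm_\kappa$ for a suitably large finite $n$ (such $\lambda$ are unbounded, as $\Mmm_\kappa\sats\ZF$). For a chosen parameter $S\in V_\lambda$ the two lemmas then supply: the hull $\widetilde X\elem V_\lambda$, its transitive collapse $X$, the uncollapse $\sigma:X\to V_\lambda$ with $\crit(\sigma)>\kappa$; the sets $\Mmm_\kappa^X\in\Mmm_\kappa$ and $\sigma_0:=\sigma\rest\Mmm_\kappa^X\in\Mmm_\kappa$; the embedding $\pi:X\to Y$ with $\crit(\pi)=\kappa$ and $\pi_0:=\pi\rest\Mmm_\kappa^X:\Mmm_\kappa^X\to\Mmm_{\pi(\kappa)}^Y$ a map lying in $\Mmm_\kappa$; and $\Mmm_\kappa^Y\in\Mmm_\kappa$ together with $t\in V_{\pi(\kappa)}^Y$ such that $\Mmm_{\pi(\kappa)}^Y\sub W_t^Y\sub\Mmm_\kappa^Y$. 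Since $W_t^Y$ is a ground of $Y$ and $Y\sats T_1$, also $W_t^Y\sats T_1$, hence $W_t^Y\sats\AC$; and $W_t^Y\sub\Mmm_\kappa^Y\sub\Mmm_\kappa$ by transitivity of $\Mmm_\kappa$. Two observations recur: (a) if $b\sub\kappa$ and $b\in\widetilde X$ then $\sigma^{-1}(b)=b$; (b) if $Z\in\Mmm_\kappa$ has $\trcl(\{Z\})$ of $V$-cardinality $\le\kappa$, then choosing $S$ to code a surjection $\kappa\to\trcl(\{Z\})$ (such $S$ lies in $V_\lambda$, usually not in $\Mmm_\kappa$) forces $Z\in\widetilde X$ and $Z\sub\widetilde X$, so $\bar Z:=\sigma^{-1}(Z)\in\Mmm_\kappa^X$ and $\sigma\rest\bar Z:\bar Z\to Z$ is a bijection.

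I would prove part~\ref{item:<kappa+-DC} first. Let $A\in\Mmm_\kappa\inter\her_{\kappa^+}$; run the setup with $S$ coding a surjection $\kappa\to\trcl(\{A\})$, so $A\in\widetilde X$, $A\sub\widetilde X$, $\bar A:=\sigma^{-1}(A)\in\Mmm_\kappa^X$, and $\sigma\rest\bar A:\bar A\to A$ is a bijection. Then $\pi_0(\bar A)\in\Mmm_{\pi(\kappa)}^Y\sub W_t^Y$, so there is a wellorder ${<^*}\in W_t^Y\sub\Mmm_\kappa$ of $\pi_0(\bar A)$. Now $\Mmm_\kappa$ contains $\bar A$, the bijection $\sigma\rest\bar A=\sigma_0\rest\bar A$, the map $\pi\rest\bar A=\pi_0\rest\bar A$, and ${<^*}$; the composite of $(\sigma\rest\bar A)^{-1}:A\to\bar A$ with $\pi\rest\bar A:\bar A\to\pi_0(\bar A)$ is an injection, and pulling ${<^*}$ back along it yields a wellorder of $A$ inside $\Mmm_\kappa$. (As $\her_{\kappa^+}^{\Mmm_\kappa}\sub\her_{\kappa^+}^V\inter\Mmm_\kappa$, this gives $\her_{\kappa^+}^{\Mmm_\kappa}\sats\AC$.)

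For part~\ref{item:kappa-choice}: given an instance of $\kappa$-$\DC$, let $R\in\Mmm_\kappa$ with $\Mmm_\kappa\sats$``every $s\in{}^{<\kappa}V$ has some $y$ with $s\mathbin Ry$''; run the setup with $S$ coding $R$ (taking $\lambda$ large enough), put $\mathcal M:=\Mmm_\kappa^X$ and $\bar R:=\sigma_0^{-1}(R)\in\mathcal M$. Then $\mathcal M\in\her_{\kappa^+}\inter\Mmm_\kappa$ is wellordered in $\Mmm_\kappa$ by part~\ref{item:<kappa+-DC}; $\mathcal M$ is ${<\kappa}$-closed in $\Mmm_\kappa$ by Lemma~\ref{lem:kappa-uniform_hull}(\ref{item:<kappa-closure}); and, since $\sigma_0$ is fully elementary and $V_\lambda^{\Mmm_\kappa}\elem_2\Mmm_\kappa$, $\mathcal M\sats$``every $s\in{}^{<\kappa}\mathcal M$ has some $y\in\mathcal M$ with $s\mathbin{\bar R}y$''. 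Recursively in $\Mmm_\kappa$, let $b(\alpha)$ be the least such witness (in the fixed wellorder of $\mathcal M$) for $b\rest\alpha$; each $b\rest\alpha$ is a ${<\kappa}$-sequence from $\mathcal M$ lying in $\Mmm_\kappa$, hence in $\mathcal M$, so the recursion succeeds; then $b$ is amenable to $\Mmm_\kappa$, so $b\in\Mmm_\kappa$ by Lemma~\ref{lem:kappa-amenably-closed}, and $\sigma_0\com b\in\Mmm_\kappa$ is a branch through $R$ (using $\sigma_0((b\rest\alpha))=(\sigma_0\com b)\rest\alpha$ and $\sigma_0(\bar R)=R$). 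For ``$\kappa$ is weakly compact'', use the characterization — valid over $\ZF$ + ``$\her_{\kappa^+}$ is wellorderable'' + $\kappa$-$\DC$, all now available in $\Mmm_\kappa$ — that $\kappa$ is weakly compact iff every transitive $M\ni\kappa$ with $|M|=\kappa$, ${}^{<\kappa}M\sub M$, and $M$ modelling a suitable fragment of $\ZF$ admits an elementary $j:M\to N$ with $N$ transitive and $\crit(j)=\kappa$. Given such $M\in\Mmm_\kappa$, run the setup with $S$ a surjection $\kappa\to M$: then $\sigma\rest\bar M:\bar M\to M$ is an isomorphism, $\pi_0(\bar M)\in\Mmm_{\pi(\kappa)}^Y\sub\Mmm_\kappa$ is transitive, and $\pi\rest\bar M:\bar M\to\pi_0(\bar M)$ is elementary with critical point $\kappa$ (satisfaction for set structures being $\Delta_1$ and $\pi$ being $\Sigma_1$-elementary), so $j:=(\pi\rest\bar M)\com(\sigma\rest\bar M)^{-1}\in\Mmm_\kappa$ is as required. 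The footnoted consequences then follow from $\Mmm_\kappa\sats\ZF$, ``$\kappa$ is inaccessible'', and part~\ref{item:<kappa+-DC}.

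For part~\ref{item:small_power}, assume $|\pow(\kappa)^{\Mmm_\kappa}|=\kappa$; running the setup with $S$ coding a surjection $\kappa\to\pow(\kappa)^{\Mmm_\kappa}$ and using (a) gives $\pow(\kappa)^{\Mmm_\kappa^X}=\pow(\kappa)^{\Mmm_\kappa}$. Then $U:=\{b\in\pow(\kappa)^{\Mmm_\kappa}\mid\kappa\in\pi_0(b)\}$ is a non-principal ultrafilter on $\pow(\kappa)^{\Mmm_\kappa}$ (by $\crit(\pi)=\kappa$ and $\Sigma_1$-elementarity), $\kappa$-complete in $\Mmm_\kappa$ (any ${<\kappa}$-sequence from $\pow(\kappa)^{\Mmm_\kappa}\sub\Mmm_\kappa^X$ lying in $\Mmm_\kappa$ lies in $\Mmm_\kappa^X$ by Lemma~\ref{lem:kappa-uniform_hull}(\ref{item:<kappa-closure}), so $\pi_0$ commutes with it), and $U\in\Mmm_\kappa$ as it is defined from $\pi_0\in\Mmm_\kappa$; this is (i). For (ii) one iterates $U$ through $\OR$: part~\ref{item:ult_emb_elem} applied to $U$ and — via elementarity of the iteration maps — to its images shows the iterated ultrapowers of $\Mmm_\kappa$ are wellfounded, and since the critical sequence fixes each $x\in\pow(\kappa)^{\Mmm_\kappa}$, one reads off a class of Silver indiscernibles for $L[x]$, giving $x^\#$ with $x^\#\in\Mmm_\kappa$. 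Finally, for part~\ref{item:ult_emb_elem}: wellfoundedness of $\Ult(\Mmm_\kappa,\mu)$ follows since $\Mmm_\kappa\sats\DC_\omega$ (part~\ref{item:kappa-choice}) — an $\in_\mu$-descending $\omega$-chain in $\Mmm_\kappa$, with Scott-minimal representatives, would produce $\left<B_n\right>_{n<\om}\in\Mmm_\kappa$ of elements of $\mu$ with $\bigcap_n B_n=\emptyset$, against countable completeness — so $\Mmm_\kappa$ has a rank function for $\Ult(\Mmm_\kappa,\mu)$, whence the latter is wellfounded in $V$; and full elementarity of $i^{\Mmm_\kappa}_\mu$ is \L o\'s's theorem, which for $\gamma<\kappa$ goes through because $\kappa$-$\DC$ implies $\AC_{<\kappa}$ in $\Mmm_\kappa$ (so witnesses can be collected into one function of $\Mmm_\kappa$), while for $\gamma=\kappa$ one first reduces to a $\kappa$-complete non-principal $\mu$ and then reflects $\mu$ into the hulls as in part~\ref{item:small_power}, using that $\pow(\kappa)^{\Mmm_\kappa^X}=\pow(\kappa)^{\Mmm_\kappa}$ there and that $\Mmm_\kappa^X$ is ${<\kappa}$-closed in $\Mmm_\kappa$. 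The delicate cases are exactly part~\ref{item:ult_emb_elem} with $\gamma=\kappa$ and the related part~\ref{item:small_power}(ii): there $\kappa$-$\AC$ can genuinely fail in $\Mmm_\kappa$, so \L o\'s's theorem and the wellfoundedness of the iterated ultrapowers must be extracted from the external hull--embedding apparatus rather than from choice inside $\Mmm_\kappa$; parts~\ref{item:<kappa+-DC}, \ref{item:kappa-choice}, and \ref{item:small_power}(i) are comparatively routine once Lemmas~\ref{lem:kappa-uniform_hull} and~\ref{lem:kappa-uniform_wc} are in hand.
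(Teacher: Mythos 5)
Your proposal follows the same backbone as the paper's proof: Lemmas~\ref{lem:kappa-uniform_hull} and~\ref{lem:kappa-uniform_wc} supply the hull and embedding, part~\ref{item:<kappa+-DC} is obtained by pulling a wellorder from a ground of $Y$ back through $\pi_0$, and $\kappa$-$\DC$ is then obtained by a recursion inside $\Mmm_\kappa$ using the ${<\kappa}$-closure of $\Mmm_\kappa^X$ in $\Mmm_\kappa$ and the fact that $\sigma_0\in\Mmm_\kappa$. Your treatment of part~\ref{item:<kappa+-DC} is actually slightly more careful than the paper's (which writes $\sigma(A)=A$ outright; you instead ensure $\trcl(\{A\})\sub\widetilde X$ via the choice of $S$ and work with $\bar A=\sigma^{-1}(A)$), and the recursion for $\kappa$-$\DC$ is packaged a bit differently (wellordering all of $\mathcal M=\Mmm_\kappa^X$ via part~\ref{item:<kappa+-DC}, applied in a second run of the lemmas, rather than wellordering just the field of a collapsed $\DC$-tree) but is the same idea.

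Two substantive comments. First, for ``$\kappa$ is weakly compact in $\Mmm_\kappa$'' you invoke the filter/embedding characterization inside $\Mmm_\kappa$ and rebuild the embedding from the hull machinery; this can be made to work, but it needs a justification that the characterization you cite is available in $\ZF+\kappa$-$\DC$ + the wellorderability facts you already have, and it is considerably heavier than what is needed. The paper's argument is direct and does not touch the hull apparatus: given a tree $T\in\Mmm_\kappa$, take a cofinal branch $b$ in $V$ (weak compactness of $\kappa$ in $V$); since $T\sub\Mmm_\kappa$, $b$ is amenable to $\Mmm_\kappa$, and $\Mmm_\kappa$ is $\kappa$-amenably-closed by Lemma~\ref{lem:kappa-amenably-closed}, so $b\in\Mmm_\kappa$.

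Second, and this is the one genuine misstep: your closing paragraph asserts that in parts~\ref{item:ult_emb_elem} (with $\gamma=\kappa$) and~\ref{item:small_power}(ii) ``$\kappa$-$\AC$ can genuinely fail in $\Mmm_\kappa$'' and that one must therefore extract \L o\'s and iterability from the external embeddings. This is not so: part~\ref{item:kappa-choice} proves $\Mmm_\kappa\sats\kappa$-$\DC$, and $\kappa$-$\DC$ (in the paper's sense of $\DC$-trees) already implies $\kappa$-$\AC$ over $\ZF$ --- apply it to the tree of partial choice functions for a $\kappa$-indexed family; a maximal branch must have domain $\kappa$. So once part~\ref{item:kappa-choice} is in hand, \L o\'s's theorem for $\Ult(\Mmm_\kappa,\mu)$ with $\mu$ over $\gamma\leq\kappa$ goes through internally by the usual argument, and the wellfoundedness of (iterated) ultrapowers is an internal $\DC_\omega$ matter, exactly as the paper's one-line derivation of part~\ref{item:ult_emb_elem} from part~\ref{item:kappa-choice} indicates. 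Your special-case reduction for $\gamma=\kappa$ (``reduce to a $\kappa$-complete non-principal $\mu$ and reflect into the hulls'') is both unnecessary and not clearly available, since the hypothesis of part~\ref{item:ult_emb_elem} does not give $\kappa$-completeness. (Lietz's example of a failure of $\kappa$-$\AC$ in a $\kappa$-mantle is for Mahlo, not weakly compact, $\kappa$, and indeed Theorem~\ref{tm:weak_compact_tm} shows such a failure cannot happen at a weakly compact cardinal.) Parts~\ref{item:<kappa+-DC}, \ref{item:kappa-choice}, \ref{item:small_power}(i) in your write-up are fine.
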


\begin{proof}
 Part \ref{item:ult_emb_elem} follows directly from part 
\ref{item:kappa-choice}, as the wellfoundedness of $\Ult(\Mmm_\kappa,\mu)$
requires only $\om$-DC, and the 
proof of Los' theorem here
only uses $\kappa$-choice.
The conclusion that $x^\#$ exists in part \ref{item:small_power}
follows easily from the rest,
using the elementarity of $i_\mu$ and that $\Ult(\Mmm_\kappa,\mu)$ is 
wellfounded.
To see that $\Mmm_\kappa\sats$``$\kappa$ is weakly compact'',
let $T\sub{^{<\kappa}2}$ be a tree in $\Mmm_\kappa$.
Then $T$ has a cofinal branch $b$ in $V$, by weak compactness in $V$.
But $b\inter V_\alpha\in\Mmm_\kappa$ for each $\alpha<\kappa$.
Therefore by \ref{lem:ground_amenably-closed}, $b\in\Mmm_\kappa$.

Here is Lietz' argument that 
$\Mmm_\kappa\sats$``$V_\kappa$ is wellordered'':\footnote{The author first 
mistakenly
thought that a similar argument worked with $\kappa$ only inaccessible,
but Lietz noted that one seems to need weak compactness for this.}
Working 
in $\Mmm_\kappa$, let $T$ be the tree
of all attempts to build a wellorder of $V_\kappa$.
(For example, let $T\sub{^{<\kappa}V_\kappa}$ be the set of all functions
$f:\alpha\to V_\kappa$ where $\alpha<\kappa$,
such that for each $\beta<\alpha$,
$f(\beta)$ is a wellorder of $V_\beta$,
and for all $\beta_1<\beta_2<\alpha$, $f(\beta_2)$ is an end extension of 
$f(\beta_1)$.) Since $V_\kappa^{\Mmm_\kappa}\sats\ZFC$, $T$ is unbounded in 
$V_\kappa$, and clearly $T\rest\alpha\in V_\kappa$ for each $\alpha<\kappa$.
Therefore by weakly compactness in $\Mmm_\kappa$,  $\Mmm_\kappa$ has a 
$T$-cofinal 
branch, and clearly this gives a wellorder of $V_\kappa\inter\Mmm_\kappa$.

We proceed now to the proof that $\Mmm_\kappa\sats\kappa$-$\DC$,
and that every set $A\in\Mmm_\kappa\inter\her_{\kappa^+}$ is wellordered in 
$\Mmm_\kappa$.
Let $\mathscr{T}\in\Mmm$ be a $\kappa$-$\DC$-tree,\footnote{
That is, a set $\mathscr{F}$ of functions $f$
such that $\dom(f)<\kappa$, with $\mathscr{F}$ closed under initial segment,
and no maximal elements; that is, for every $f\in\mathscr{F}$
there is $g\in\mathscr{F}$ with $\dom(f)<\dom(g)$ and $f=g\rest\dom(f)$.
Note that $\kappa$-$\DC$ is just the assertion
that for every $\kappa$-$\DC$ tree $\mathscr{T}$,
there is a $\mathscr{T}$-maximal branch;
that is, a function $f\notin\mathscr{T}$ such that $f\rest\alpha\in\mathscr{T}$
for all $\alpha<\dom(f)$.} and let 
$A\in\Mmm_\kappa\inter\her_{\kappa^+}$.
Let $S=(\mathscr{T},A)\in V_\lambda$ and $X$ be a $\kappa$-uniform
hull, etc, with $S\in X$ and everything as in Lemma 
\ref{lem:kappa-uniform_hull},
and let $\pi:X\to Y$, etc, be as in Lemma \ref{lem:kappa-uniform_wc}.
So $\sigma:X\to V_\lambda$ is fully elementary with 
$\kappa<\crit(\sigma)$. Let $\sigma(\bar{\mathscr{T}})=\mathscr{T}$
and $\sigma(A)=A$.

By \ref{lem:kappa-uniform_wc}, 
$\pi'=\pi\rest\Mmm^X_\kappa:\Mmm^X_\kappa\to\Mmm^Y_{\pi(\kappa)}$ is cofinal 
$\Sigma_1$-elementary, and these models and map
belong to $\Mmm_\kappa$. We have $A,\bar{\mathscr{T}}\in\Mmm^X_\kappa$.

We first find a wellorder of $A$ in $\Mmm_\kappa$,
by arguing as in Schindler's proof of Fact \ref{fact:meas}, but using the weak 
compactness 
embedding.
We have $\pi'(A)\in\Mmm_{\pi(\kappa)}^Y$.
By \ref{lem:kappa-uniform_wc}, there is a ground $W$ of $\Mmm^Y_{\pi(\kappa)}$
such that
\[ \Mmm_{\pi(\kappa)}^Y\sub W\sub\Mmm_{\kappa}^Y\in\Mmm_\kappa.\]
So $W\sats\AC$ and $\pi'(A)\in W$.
Let ${<^*}\in W$ be a wellorder of $\pi'(A)$.
So ${<^*}\in\Mmm_\kappa$. Working in $\Mmm_\kappa$,
we can therefore wellorder $A$ by setting, for $x,y\in A$:
\[ x<_Ay\iff\pi'(x)<^*\pi'(y).\]

We now find a branch through $\bar{\mathscr{T}}$ in $\Mmm_\kappa$,
with length $\kappa$.
Let $B\in\Mmm_\kappa^X$ be the field of $\bar{\mathscr{T}}$.
As above, there is a wellorder
$<^*$ of $B$ in $\Mmm_\kappa$.
Working in $\Mmm_\kappa$,
we  recursively construct a sequence $\left<x_\alpha\right>_{\alpha<\kappa}$
constituting a branch through $\bar{\mathscr{T}}$,
using $<^*$ to pick next elements,
and noting that at limit stages $\eta<\kappa$,
we get $\left<x_\alpha\right>_{\alpha<\eta}\in\Mmm_\kappa^X$,
because
by \ref{lem:kappa-uniform_wc} part \ref{item:<kappa-closure}
we have $(\Mmm_\kappa^X)^{<\kappa}
\inter\Mmm_\kappa\sub\Mmm_\kappa^X$. By \ref{lem:kappa-uniform_hull},
$\sigma'=\sigma\rest\Mmm_\kappa^X\in\Mmm_\kappa$,
and note that
 $\left<\sigma'(x_\alpha)\right>_{\alpha<\kappa}$
is a cofinal branch through $\mathscr{T}$, as desired.

 Part \ref{item:small_power}: Now suppose
$\pow(\kappa)\inter\Mmm_\kappa\in\her_{\kappa^+}$.
Then we may assume that $A=\pow(\kappa)\inter\Mmm_\kappa$ above.
Therefore $\pi':\Mmm_\kappa^X\to\Mmm_{\pi(\kappa)}^Y$
is $\Mmm_\kappa$-total. Therefore $\kappa$ is measurable in $\Mmm_\kappa$.
Since  $\Mmm_\kappa\sats\kappa$-$\DC$,
the rest now follows, as discussed in the first paragraph of the proof. 
\end{proof}

Recall  $(\alpha,X)$-Choice
from Definition \ref{dfn:(alpha,X)-Choice}:

\begin{tm}\label{tm:inaccessible_tm}
\inacctm\end{tm}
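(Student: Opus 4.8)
The plan is to treat the three parts in turn. Part~\ref{item:Mmm_kappa_kappa-amenc} requires no work: it is exactly Lemma~\ref{lem:kappa-amenably-closed}. For part~\ref{item:V_kappa_wo}, the key point is that, $\kappa$ being inaccessible, $\her_\kappa^{\Mmm_\kappa}=V_\kappa^{\Mmm_\kappa}$ and this is a model of $\ZFC$ (as recorded in the introduction). The backward direction is then immediate: from a wellorder ${<}$ of $V_\kappa^{\Mmm_\kappa}$ lying in $\Mmm_\kappa$, any $f\colon\kappa\to\her_\kappa^{\Mmm_\kappa}$ in $\Mmm_\kappa$ has the choice function $\alpha\mapsto({<}\text{-least member of }f(\alpha))$, which is definable in $\Mmm_\kappa$ from $f$ and ${<}$. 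For the forward direction I would, working in $\Mmm_\kappa$ (which satisfies $\ZF$), let $f(\alpha)$ be the set of wellorders of $V_\alpha$ for $\alpha<\kappa$; this set is nonempty since $V_\kappa^{\Mmm_\kappa}\sats\ZFC$, and it lies in $\her_\kappa=V_\kappa$ by inaccessibility, so $(\kappa,\her_\kappa)$-Choice yields $g\in\Mmm_\kappa$ with $g(\alpha)$ a wellorder of $V_\alpha$. One then pastes these together rank-by-rank — comparing first by von~Neumann rank and, within a fixed rank $\beta$, by the wellorder $g(\beta+1)$ of $V_{\beta+1}$ — to get a wellorder of $V_\kappa^{\Mmm_\kappa}$ definable in $\Mmm_\kappa$; this is again a set there, so lies in $\Mmm_\kappa$. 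Since $\her_\kappa^{\Mmm_\kappa}=V_\kappa^{\Mmm_\kappa}$, the equivalence follows.

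For part~\ref{item:<kappa-Choice}, fix $\gamma<\kappa$ and $f\colon\gamma\to\her_{\kappa^+}^{\Mmm_\kappa}$ in $\Mmm_\kappa$ (with $\kappa^+$ in the sense of $\Mmm_\kappa$). The engine is Lemma~\ref{lem:kappa-uniform_hull}: choose a strong limit $\lambda=\beth_\lambda$ with $\cof(\lambda)>\kappa$, $V_\lambda\elem_2 V$, and $f\in V_\lambda$, and form a $\kappa$-uniform hull with $S=f$, obtaining $X$, the fully elementary $\sigma\colon X\to V_\lambda$ with $\crit(\sigma)>\kappa$, the set $\Mmm_\kappa^X\in\Mmm_\kappa$, and the ${<\kappa}$-grounds $X_r=W_r^X$ of $X$ with $\bigcap_{r\in V_\kappa}X_r=\Mmm_\kappa^X$ (Lemma~\ref{lem:kappa-uniform_hull}(\ref{item:hull_mantle_invariance})). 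Recall that $\sigma':=\sigma\rest\Mmm_\kappa^X\colon\Mmm_\kappa^X\to\Mmm_\kappa^{V_\lambda}$ is fully elementary and lies in $\Mmm_\kappa$, that $(\Mmm_\kappa^X)^{<\kappa}\inter\Mmm_\kappa\sub\Mmm_\kappa^X$ (Lemma~\ref{lem:kappa-uniform_hull}(\ref{item:<kappa-closure})), and that $X$ models $T_1$ together with the local set-directedness of grounds (Lemma~\ref{lem:kappa-uniform_hull}(\ref{item:statements_in_X})). Put $\bar f=(\sigma')^{-1}(f)\in\Mmm_\kappa^X$; since $\gamma<\kappa<\crit(\sigma')$, we have $\sigma'(\bar f(\beta))=f(\beta)$ for every $\beta<\gamma$.

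I would then reduce to a statement about the \emph{set} model $\Mmm_\kappa^X$: it suffices to produce $\bar g\in\Mmm_\kappa$ with $\bar g\colon\gamma\to\Mmm_\kappa^X$ and $\bar g(\beta)\in\bar f(\beta)$ for all $\beta<\gamma$ with $\bar f(\beta)\neq\emptyset$; for then $g(\beta):=\sigma'(\bar g(\beta))$ is a well-defined member of $\Mmm_\kappa$ (definable from $\bar g,\sigma'\in\Mmm_\kappa$) and $g(\beta)\in\sigma'(\bar f(\beta))=f(\beta)$, so $g$ is the desired choice function for $f$. To find such a $\bar g$, one uses that $\bar f$, and every element of each $\bar f(\beta)$, lies in $\Mmm_\kappa^X=\bigcap_r X_r$; that each $X_r$ satisfies $\AC$ (as $T_1\sats\AC$) and hence carries a choice function for $\bar f$; that the $X_r$ are, in $X$, downward set-directed; and that $\Mmm_\kappa^X$ is ${<\kappa}$-closed in $\Mmm_\kappa$ and $\gamma<\kappa$ — these should combine to pin down a single $\bar g\in\bigcap_r X_r=\Mmm_\kappa^X\sub\Mmm_\kappa$. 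Finally, $(\her_{\kappa^+})^{<\kappa}\sub\her_{\kappa^+}$ in $\Mmm_\kappa$ is deduced from the Choice principle just proved.

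The hard part, as I see it, is exactly this last step. One cannot simply wellorder $\bigcup\rg(\bar f)$: $\Mmm_\kappa^X$ need not be wellorderable in $\Mmm_\kappa$, and in fact by Lietz's example \cite{lietz} it cannot always be (otherwise every member of $\her_{\kappa^+}^{\Mmm_\kappa}$ would be wellorderable in $\Mmm_\kappa$, contradicting the failure of $(\kappa,\her_{\kappa^+})$-Choice there). So the choice function $\bar g$ must be extracted from the interaction of the directed family $\langle X_r\rangle_{r\in V_\kappa}$ with the closure properties of $\Mmm_\kappa^X$, and the hypothesis $\gamma<\kappa$ must be used essentially — this is precisely where the inaccessible case falls short of the stronger conclusion of Theorem~\ref{tm:weak_compact_tm}, whose proof can instead route through a $\ZFC$ model sitting inside $\Mmm_\kappa$ and actually containing the relevant objects, by courtesy of the weak compactness embedding.
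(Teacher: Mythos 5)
Parts~\ref{item:Mmm_kappa_kappa-amenc} and~\ref{item:V_kappa_wo} are handled exactly as in the paper and are fine. The problem is part~\ref{item:<kappa-Choice}: your writeup sets up the right infrastructure (a $\kappa$-uniform hull $X$ with $\sigma\colon X\to V_\lambda$, the pullback $\bar f=(\sigma')^{-1}(f)\in\Mmm_\kappa^X$, and the correct reduction to producing $\bar g\in\Mmm_\kappa$ with $\bar g\colon\gamma\to\Mmm_\kappa^X$ and $\bar g(\beta)\in\bar f(\beta)$), but it stops precisely at the point where the content lives. ``These should combine to pin down a single $\bar g\in\bigcap_r X_r$'' is not an argument, and the obstructions you yourself list are real: the models $X_r$ have AC and each carries \emph{some} choice function for $\bar f$, and they are set-directed, but nothing in directedness plus ${<\kappa}$-closure forces those choice functions to agree or to converge to a single $\bar g$ in the intersection $\Mmm_\kappa^X$. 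That intersection can be badly choiceless. So this is a genuine gap, and you have flagged it honestly as such, but you have not closed it.

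The paper's argument routes around this via an idea you don't anticipate. It does not try to produce $\bar g$ inside $\Mmm_\kappa^X$ at all. Instead it first replaces each $f_\alpha$ by a surjection $g_\alpha\colon\kappa\to\mathrm{trcl}(f_\alpha)$ with $g_\alpha\in\Mmm_\kappa$, together with a code $c_\alpha\sub\kappa$; the sequences $g,c$ themselves need not be in $\Mmm_\kappa$, only their coordinates. It then refines Lemma~\ref{lem:kappa-uniform_hull}'s continuous chain $\left<\widetilde X_\alpha\right>_{\alpha<\kappa}$ to extract, for club-many $\bar\kappa<\kappa$, a small sub-hull system $X^{\bar\kappa}_r$ of $V$-cardinality $\bar\kappa$ with embeddings $\pi^{\bar\kappa}_r\in W_r$, $\crit=\bar\kappa$. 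The key is that $(\her_{\bar\kappa^+})^{\Mmm_\kappa}$ is an element of $V_\kappa^{\Mmm_\kappa}\sats\ZFC$, so it is \emph{wellorderable in $\Mmm_\kappa$}; one picks a wellorder $<_{\bar\kappa}\in\Mmm_\kappa$ of it for each $\bar\kappa$. Using $<_{\bar\kappa}$ one defines a candidate choice in the $\bar\kappa$-sized world, recorded by ordinals $\xi^{\bar\kappa}_\alpha<\bar\kappa$ via $g^{\bar\kappa}_\alpha$. The point of the codes $c_\alpha$ and surjections $g_\alpha$ is precisely that these candidate choices can be captured by the $\gamma$-sequence $\left<\xi^{\bar\kappa}_\alpha\right>_\alpha\in{}^\gamma\bar\kappa$. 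One then fixes an enumeration of ${}^\gamma\kappa$ in order type $\kappa$ continuous at points of cofinality $>\gamma$, passes to a stationary set $S'$ of $\bar\kappa$ of cofinality $\gamma^+$ on which the index $\beta_{\bar\kappa}$ of $\left<\xi^{\bar\kappa}_\alpha\right>_\alpha$ is constant, and defines $d(\alpha)=\pi_{\emptyset\bar\kappa}\bigl(z^{\bar\kappa}_\alpha\bigr)$ for any $\bar\kappa\in S'$. The pigeonhole makes $d$ well-defined, and the final verification that $d\in\Mmm_\kappa$ proceeds by showing $d\in W_r$ for each $r\in V_\kappa$ separately (pick $\bar\kappa\in S'$ above the rank of $r$ and note $X^{\bar\kappa}_r,\pi^{\bar\kappa}_r,<_{\bar\kappa}\in W_r$). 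So the hypothesis $\gamma<\kappa$ enters through the stationary-set/pigeonhole step, not through closure of $\Mmm_\kappa^X$; and the missing ingredient you are looking for is not a uniformization of the $X_r$-choice functions but the coding-plus-stabilization argument just described.
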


\begin{rem}
Note that in part \ref{item:<kappa-Choice},
the ``$\kappa^+$'' and ``$\her_{\kappa^+}$''
are both in the sense of $\Mmm_\kappa$.
Note that also, as $\kappa$ is inaccessible,
$V_\kappa^{\Mmm_\kappa}\sats\ZFC$,
$\Mmm_\kappa\sats$``$\kappa$ is inaccessible'',
and $\Mmm_\kappa$ is $\kappa$-amenable closed,
by Lemma \ref{lem:kappa-amenably-closed}.
\end{rem}

\begin{proof}
Part \ref{item:Mmm_kappa_kappa-amenc} was Lemma \ref{lem:kappa-amenably-closed},
and since 
$V_\kappa^{\Mmm_\kappa}=\her_\kappa^{\Mmm_\kappa}\sats\ZFC$,
part  \ref{item:V_kappa_wo} is easy.

Part \ref{item:<kappa-Choice}: Let 
$\gamma<\kappa$ and $f\in\Mmm_\kappa$ with
$f:\gamma\to(\her_{\kappa^+})^{\Mmm_\kappa}$.
We find a choice function for $f$ in $\Mmm_\kappa$.
Write $f_\alpha=f(\alpha)$.
Fix a function $g:\gamma\to\Mmm_\kappa$ with
\[ g_\alpha=g(\alpha):\kappa\to 
\mathrm{trancl}(f_\alpha) \]
surjective
for each $\alpha<\gamma$.
Let $c:\gamma\to\Mmm_\kappa$ be $c_\alpha=c(\alpha)\sub\kappa$
the
induced code for $g_\alpha$
(so $c_\alpha,g_\alpha\in\Mmm_\kappa$,
but note we don't know that $c,g\in\Mmm_\kappa$).
Fix $\lambda$ and a $\kappa$-uniform hull
$\widetilde{X}\elem V_\lambda$ with
$f,c,g\in\widetilde{X}$
and everything else as
in \ref{lem:kappa-uniform_hull}.
So 
$\sigma(f,c,g)=(f,c,g)$.
Fix a club $C$ of $\bar{\kappa}<\kappa$
such that $\gamma<\bar{\kappa}$ and $V_{\bar{\kappa}}\elem V_\kappa$
and such that we get a corresponding system of 
structures $X^{\bar{\kappa}}_{r}$ and elementary embeddings
$\pi^{\bar{\kappa}}_r:X^{\bar{\kappa}}_r\to X_r$,
for $r\in V_{\bar{\kappa}}$,
with $X^{\bar{\kappa}}_r,\pi^{\bar{\kappa}}_r\in W_r$, $X^{\bar{\kappa}}_r$ of 
cardinality
$\bar{\kappa}$ in $W_r$,
 $\crit(\pi^{\bar{\kappa}}_r)=\bar{\kappa}$
and $\pi^{\bar{\kappa}}_r(\bar{\kappa})=\kappa$, and each 
$X^{\bar{\kappa}}_r[G_r]=X_{\emptyset\bar{\kappa}}$
and $\pi^{\bar{\kappa}}_r\sub\pi_{\emptyset\bar{\kappa}}$,
and with $f,c_\alpha,g_\alpha\in\rg(\pi^{\bar{\kappa}}_r)$
for each $\alpha<\gamma$.
Write 
$\pi^{\bar{\kappa}}_r(f^{\bar{\kappa}},
c^{\bar{\kappa}}_\alpha,g^{\bar{\kappa}}_\alpha)=
(f,c_\alpha,g_\alpha)$.
So $c^{\bar{\kappa}}_\alpha=c_\alpha\inter\bar{\kappa}$, 
so
$c^{\bar{\kappa}}_\alpha,g^{\bar{\kappa}}_\alpha\in
(\her_{\bar{\kappa}^+})^{\Mmm_\kappa}$,
and $f^{\bar{\kappa}}:\gamma\to(\her_{\bar{\kappa}^+})^{\Mmm_\kappa}$
with $f^{\bar{\kappa}}_\alpha\sub\rg(g^{\bar{\kappa}}_\alpha)$.
Let $c^{\bar{\kappa}}:\gamma\to\Mmm_\kappa$
be $c^{\bar{\kappa}}(\alpha)=c^{\bar{\kappa}}_\alpha$
and likewise for $g^{\bar{\kappa}}$.

In $V$,
pick a sequence $\left<{<_{\bar{\kappa}}}\right>_{\bar{\kappa}\in C}$
of wellorders $<_{\bar{\kappa}}$
of $(\her_{\bar{\kappa}^+})^{\Mmm_\kappa}$
with ${<_{\bar{\kappa}}}\in\Mmm_\kappa$.
Let  $z^{\bar{\kappa}}_\alpha$ be the $<_{\bar{\kappa}}$-least
element of $f^{\bar{\kappa}}_\alpha$,
and let $\xi^{\bar{\kappa}}_\alpha<\bar{\kappa}$ be the
least $\xi$ with $g^{\bar{\kappa}}_\alpha(\xi)=z_{\alpha}^{\bar{\kappa}}$.

Let $S$ be the stationary set of all strong limit cardinals
$\bar{\kappa}\in C$ of cofinality $\gamma^+$.
Enumerate ${^\gamma}\kappa$ as $\{s_\beta\}_{\beta<\kappa}$,
with ${^\gamma}\bar{\kappa}=\{s_\beta\}_{\beta<\bar{\kappa}}$
for each $\bar{\kappa}\in S$.
For $\bar{\kappa}\in S$, let $\beta_{\bar{\kappa}}$
be the $\beta<\bar{\kappa}$ such that 
$s_\beta=\left<\xi^{\bar{\kappa}}_\alpha\right>_{\alpha<\gamma
}$.
Let $S'\sub S$ be stationary and such that
$\beta_{\bar{\kappa}}$ is constant for $\bar{\kappa}\in S'$.

Let $d:\gamma\to\Mmm_\kappa$ be the choice function for $f$ given by
$d(\alpha)=\pi_{\emptyset\bar{\kappa}}(z^{\bar{\kappa}}_\alpha)$,
whenever $\bar{\kappa}\in S'$.
This is independent of $\bar{\kappa}\in S'$.
For if $\kappabar_0,\kappabar_1\in S'$ with $\kappabar_0<\kappabar_1$,
then for each $\alpha<\gamma$, we have
$\xi=\xi^{\bar{\kappa}_0}_\alpha=\xi^{\bar{\kappa}_1}_\alpha$,
so
\[ 
\pi_{\emptyset\bar{\kappa}_0}(z^{\bar{\kappa}_0}_\alpha)=
\pi_{\emptyset\bar{\kappa}_0}(g^{\bar{\kappa}_0}_\alpha(\xi))=
g_\alpha(\xi)=
\pi_{\emptyset\bar{\kappa}_1}(g^{\bar{\kappa}_1}_\alpha(\xi))=
\pi_{\emptyset\bar{\kappa}_1}(z^{\bar{\kappa}_1}_\alpha).\]

But $d\in\Mmm_\kappa$.
For given $r\in V_\kappa$, let $\bar{\kappa}\in S'$
with $r\in V_{\bar{\kappa}}$. 
Then $f^{\bar{\kappa}}\in X^{\bar{\kappa}}_r$ and 
$\pi^{\bar{\kappa}}_r(f^{\bar{\kappa}})=f$,
since $\pi^{\bar{\kappa}}_r\sub\pi_{\emptyset\bar{\kappa}}$.
And $X^{\bar{\kappa}}_r\in W_r$, so $f^{\bar{\kappa}}\in W_r$.
But 
$<_{\bar{\kappa}}$
is in $W_r$, and so 
$d^{\bar{\kappa}}=\left<z^{\bar{\kappa}}_\alpha\right>_{\alpha<\gamma}\in W_r$. 
And since
$\pi^{\bar{\kappa}}_r\sub\pi_{\emptyset\bar{\kappa}}$,
$\pi^{\bar{\kappa}}_r(d^{\bar{\kappa}})=d$. Since $\pi^{\bar{\kappa}}_r\in 
W_r$, 
therefore $d\in W_r$.
So $d\in\Mmm_\kappa\sats$``$d$ is a choice function for $f$'', so we are done.
\end{proof}

\bibliographystyle{plain}
\bibliography{../bibliography/bibliography}

\begin{thebibliography}{10}

\bibitem{stgeol}
Joel David~Hamkins Gunter~Fuchs and Jonas Reitz.
\newblock Set theoretic geology.
\newblock {\em Annals of Pure and Applied Logic}, 2015.

\bibitem{jech}
Thomas Jech.
\newblock {\em Set Theory - The Third Millenium Edition, revised and expanded}.
\newblock Springer, 2003.

\bibitem{bhtu}
Konstantinos~Tsaprounis Joan~Bagaria, Joel David~Hamkins and Toshimichi Usuba.
\newblock Superstrong and other large cardinals are never laver indestructible.
\newblock {\em Archive for Mathematical Logic}, 55, 2016.

\bibitem{laver_vlc}
Richard Laver.
\newblock Certain very large cardinals are not created in small forcing
  extensions.
\newblock {\em Annals of Pure and Applied Logic}, 2007.

\bibitem{lietz}
Andreas Lietz.
\newblock The axiom of choice can fail in the {${<\kappa}$}-mantle.
\newblock Available at \url{\lu}.

\bibitem{vm2}
Grigor Sargsyan, Ralf Schindler, and Farmer Schlutzenberg.
\newblock Varsovian models {II}.
\newblock In preparation.

\bibitem{schindler_buk}
Ralf Schindler.
\newblock The long extender algebra.
\newblock {\em Archive for Mathematical Logic}, 2018.

\bibitem{schindler_fsttimtg}
Ralf Schindler.
\newblock From set theoretic to inner model theoretic geology.
\newblock Available at \url{\ralfurl}, 2020.

\bibitem{local_mantles_in_Lx}
Farmer Schlutzenberg.
\newblock Local mantles in {$L[x]$}.
\newblock In preparation.

\bibitem{usuba_ddg}
Toshimichi Usuba.
\newblock The downward directed grounds hypothesis and very large cardinals.
\newblock {\em Journal of Mathematical Logic}, 2017.

\bibitem{usuba_extendible}
Toshimichi Usuba.
\newblock Extendible cardinals and the mantle.
\newblock {\em Archive for Mathematical Logic}, 58, 2019.

\end{thebibliography}
\end{document}